\documentclass{amsart}
\pdfoutput=1
\usepackage{amsfonts}
\usepackage{amsmath}
\usepackage{amssymb}
\usepackage{eucal}
\usepackage{graphicx}
\usepackage{color}
\definecolor{darkgreen}{rgb}{0.0,0.5,0.0}
\definecolor{navy}{rgb}{0.0,0.0,0.5}
\definecolor{ocher}{rgb}{0.6,0.4,0.0}
\definecolor{sepia}{rgb}{0.4,0.2,0.0}
\usepackage{hyperref}
\hypersetup{colorlinks,
            citecolor=blue,
            filecolor=red,
            linkcolor=ocher,
            urlcolor=darkgreen,
            bookmarksopen=true, pdftex}


\newtheorem{theorem}{Theorem}
\theoremstyle{plain}
\newtheorem{lemma}{Lemma}

\numberwithin{equation}{section}
\setcounter{tocdepth}{2}

\newcommand{\N}{\mathbb{N}}
\newcommand{\R}{\mathbb{R}}
\newcommand{\br}{\bar{r}}
\newcommand{\bv}{\bar{v}}
\newcommand{\bt}{\bar{t}}
\newcommand{\cD}{\mathcal{D}}
\newcommand{\cF}{\mathcal{F}}
\newcommand{\cK}{\mathcal{K}}
\newcommand{\cL}{\mathcal{L}}
\newcommand{\cM}{\mathcal{M}}
\newcommand{\cN}{\mathcal{N}}
\newcommand{\cO}{\mathcal{O}}
\newcommand{\cQ}{\mathcal{Q}}
\newcommand{\cS}{\mathcal{S}}
\newcommand{\tg}{\tilde{g}}
\newcommand{\tpsi}{\tilde{\psi}}
\newcommand{\tv}{\tilde{v}}
\newcommand{\gcan}{g_{\mathrm{can}}}
\newcommand{\Rc}{\mathrm{Rc}}
\newcommand{\Rm}{\mathrm{Rm}}
\newcommand{\OUT}{{\mathrm{out}}}
\newcommand{\IN}{{\mathrm{in}}}
\newcommand{\PAR}{{\mathrm{par}}}
\newcommand{\COL}{\mathrm{col}}
\newcommand{\NP}{\mathrm{NP}}
\newcommand{\SP}{\mathrm{SP}}
\newcommand{\init}{\mathrm{init}}
\newcommand{\bry}{\mathfrak{B}}
\newcommand{\cry}{\mathfrak{C}}
\newcommand{\vini}{v_\init}
\newcommand{\uini}{u_\init}
\newcommand{\amax}{\mathcal{A}}

\begin{document}

\title[Minimally invasive surgery for Ricci flow]
{Minimally invasive surgery\\for Ricci flow singularities}

\author{Sigurd B.~Angenent} \address[Sigurd Angenent]{ University of
Wisconsin-Madison} \email{angenent@math.wisc.edu}
\urladdr{http://www.math.wisc.edu/\symbol{126}angenent/}

\author{M.~Cristina Caputo} \address[M.~Cristina Caputo]{ University of
Texas at Austin} \email{caputo@math.utexas.edu}
\urladdr{http://www.ma.utexas.edu/users/caputo/}

\author{Dan Knopf} \address[Dan Knopf]{ University of Texas at Austin}
\email{danknopf@math.utexas.edu}
\urladdr{http://www.ma.utexas.edu/users/danknopf/}

\thanks{S.B.A.~acknowledges NSF support via DMS-0705431.  M.C.C.~and
D.K.~acknowledge NSF support via DMS-0545984. M.C.C.~ thanks the
Max Planck Institut f\"ur Gravitationsphysik (AEI), whose hospitality
she enjoyed during part of the time this paper was in preparation.}

\begin{abstract}
In this paper, we construct smooth forward Ricci flow evolutions of
singular initial metrics resulting from rotationally symmetric neckpinches on
$\cS^{n+1}$, without performing an intervening surgery. In the restrictive
context of rotational symmetry, this construction gives evidence in favor
of Perelman's hope for a ``canonically defined Ricci flow through singularities''.
\end{abstract}

\maketitle
\tableofcontents

\section{Introduction}

Finite time singularity formation is in a sense a ``generic'' property of
Ricci flow.  For example, on any Riemannian manifold where the maximum
principle applies, a solution $(\cM^{n},g(t))$ whose scalar curvature at
time $t=0$ is bounded from below by a positive constant $r$ must become
singular at or before the formal vanishing time $T_{\mathrm{form}}=n/2r$.
In some cases, e.g.~if the curvature operator of the initial metric is
sufficiently close to that of a round sphere, the entire manifold
disappears in a global singularity.  A beautiful example is the result of
Brendle and Schoen that a compact manifold with pointwise $1/4$-pinched
sectional curvatures will shrink to a round point \cite{BS08}.  Under more
general conditions, one expects formation of a local singularity at
$T<\infty$.  Here, there exists an open set $\Omega$ of $\cM$ such that
$\limsup_{t\nearrow T}\left| \Rm (x,t)\right| <\infty$ for points
$x\in\Omega$.

Such behavior has long been strongly conjectured; see e.g.~Hamilton's
heuristic arguments \cite[Section~3] {Ham95}.  All known rigorous
Riemannian (i.e.~non-K\"{a}hler) examples involve ``necks'' forming under
certain symmetry hypotheses.  Local singularity formation was first
established by Simon on noncompact manifolds \cite{Simon00}.  Neckpinch
singularities for metrics on $\cS^{n+1}$ were studied by two of the authors
\cite{AK04, AK07}.  Gu and Zhu established existence of local Type-II
(i.e.~slowly forming) ``degenerate neckpinch'' singularities on
$\cS^{n+1}$ \cite{GZ08}.

Continuing a solution of Ricci flow past a singular time $T<\infty$ has
always been done by surgery.  Hamilton introduced a surgery algorithm for
compact $4$-manifolds of positive isotropic curvature \cite{Ham97}.
(Huisken and Sinestrari have a related surgery program for solutions of
mean curvature flow \cite{HS09}.) Perelman developed a somewhat different
surgery algorithm for compact $3$-manifolds \cite{P1, P2, P3}.  A solution
of Ricci Flow with Surgery (\textsc{rfs}) is a sequence
$(\cM_{k}^{n},g(t):T_{k}^{-}\leq t<T_{k}^{+})$ of maximal smooth solutions
of Ricci flow such that at each discrete surgery time
$T_{k}^{+}=T_{k+1}^{-}$, the smooth manifold $\bigl( \cM_{k+1}^3
,g(T_{k+1}^{-})\bigr) $ is obtained from the singular limit $\bigl(
\cM_{k}^3,g(T_{k}^{+})\bigr) = \bigl( \cM_{k}^3,\lim _{t\nearrow
T_{k}^{+}}g(t)\bigr)$ by a known topological-geometric modification.
Each geometric modification depends on a number of choices (e.g.~surgery scale,
conformal factors and cut-off functions) that must be made carefully so
that critical \emph{a priori} estimates are preserved.  The technical
details of \textsc{rfs} for a $3$-manifold are discussed extensively in the
literature; see e.g.~\cite{KL07}, \cite{CZ06}, \cite{MT07}, and
\cite{Tao06}.

It is tempting to ask whether the choices made in surgery can somehow be
eliminated.  Indeed, Perelman conjectures \cite[Section~13.2]{P1} that the
following important and natural question has an affirmative answer:

\begin{quote}
  \textbf{Question 1 } (Perelman).  Let $g_{i}(t)$ denote a smooth Ricci
  flow solution obtained by performing surgery on singular data
  $(\cM^3,g(T))$ at a scale $h_{i}>0$.  Does the sequence $\{ \cM^3,
  g_{i}(t):T\leq t<T_{i}\} $ have a well-defined limit as $h_{i}\searrow0$\,?
\end{quote}

He writes: ``It is likely that by passing to the limit in this construction
one would get a canonically defined Ricci flow through singularities, but
at the moment I don't have a proof of that.''  If one regards a sequence
$\{ (\cM ^{n},g_{i}(T)):i\in\N\}$ of surgically modified initial data as a
sequence of smooth approximations to irregular initial data
$(\cM^{n},g(T))$, Question~1 may be regarded as a problem of showing that
the Ricci flow system of \textsc{pde} is well posed with respect to a
particular regularization scheme.

Somewhat more generally, Perelman's wish for a ``canonically defined Ricci
flow through singularities'' might be rephrased as follows:

\begin{quote}
  \textbf{Question 2 } (Perelman).  Is it possible to flow directly out of
  a local singularity without arbitrary choices?
\end{quote}

In this paper, we consider $\textrm{SO}(n+1, \R)$-invariant metrics on
$\cS^{n+1}$, and within this restricted context provide positive answers to
both Questions~1 and 2 by exhibiting forward evolutions of the rotationally
symmetric neckpinch.  We say a smooth complete solution $(\cM^{n}
,g(t):T<t<T')$ of Ricci flow is a \emph{forward evolution} of a singular
metric $(\cM^{n},\hat{g}(T))$ if, as $t\searrow T$, the metric
$g(t)|_{\cO}$ converges smoothly to $\hat{g}(T)|_{\cO}$ on any open subset
$\cO\subset\cM$ for which $\hat{g}$ is regular on $\overline\cO$.  Thus we
effectively let the Ricci flow \textsc{pde} perform surgery at scale zero.
In so doing, we show that any forward evolution with the same symmetries as
$(\cM^{n},\hat{g}(T))$ must have a precise asymptotic profile as it emerges
from the singularity.  Although the hypothesis of rotational symmetry is
highly restrictive,\footnote{On the other hand, formal matched asymptotics
for fully general neckpinches predict that every neckpinch is
asymptotically rotationally symmetric \cite[Section~3]{AK07}.} our
construction provides examples of what a canonical flow through
singularities would look like if Perelman's hope could be answered
affirmatively in general.

There exist a few examples in the literature of non-smooth initial data
evolving by Ricci flow, none of which apply to the situation considered
here.  Bemelmans, Min-Oo, and Ruh applied Ricci flow to regularize $C^2$
initial metrics with bounded sectional curvatures \cite{BMR84}.  Simon used
Ricci flow modified by diffeomorphisms to evolve complete $C^0$ initial
metrics that can be uniformly approximated by smooth metrics with bounded
sectional curvatures \cite{Simon02}.  Simon also evolved $3$-dimensional
metric spaces that arise as Gromov--Hausdorff limits of sequences of
complete Riemannian manifolds of almost nonnegative curvatures whose
diameters are bounded away from infinity and whose volumes are bounded away
from zero \cite{Simon08}.  Koch and Lamm demonstrated global existence and
uniqueness for Ricci--DeTurck flow of initial data that are close to the
Euclidean metric in $L^{\infty }(\R^{n})$ \cite{KL09}; this may be regarded
as a generalization of a stability result of Schn\"{u}rer, Schulze, and
Simon \cite{SSS07}.  In recent work, Chen, Tian, and Zhang defined and
studied \textit{weak solutions} of K\"{a}hler--Ricci flow whose initial
data are K\"{a}hler currents with bounded $C^{1,1}$ potentials
\cite{CTZ08}.  The special case of conformal initial data $e^{u}g$ on a
compact Riemannian surface $(\cM^2,g)$ with $e^{u}\in L^{\infty}(\cM^2)$
was considered earlier by Chen and Ding \cite{CD07}.  (Though their proofs
are quite different, both of these papers take advantage of circumstances
in which Ricci flow reduces to a scalar evolution equation.) The term
``weak solutions'' was used in a different context by Bessi\`{e}res,
Besson, Boileau, Maillot, and Porti to describe certain solutions of
\textsc{rfs} on compact, irreducible non-spherical $3$-manifolds
\cite{BBBMP08} .  In another direction, Topping analyzed Ricci flow of
incomplete initial metrics on surfaces with Gaussian curvature bounded from
above \cite{Topping09}.

\bigskip

This paper and its results are organized as follows.  In
Section~\ref{sec:Setup}, we use rotational symmetry to simplify our
problem. The assumption of rotational symmetry generally allows one to reduce
the full Ricci flow system to a scalar parabolic \textsc{pde} in one space
dimension. For this problem, we were not able to find a convenient global
description of the solution in terms of a one-dimensional scalar heat
equation. However, in Section~\ref{sec:Setup}, we show that, at least in
appropriate \emph{local} coordinates valid in a neighborhood of the
singular point, a forward evolution of Ricci flow
\begin{equation}
  \frac{\partial g}{\partial t}=-2\Rc \label{ForwardEvolution}
\end{equation}
emerging from a rotationally symmetric neckpinch singularity at time zero
is equivalent to a smooth positive solution of the quasilinear \textsc{pde}
\begin{equation}
  v_{t}=vv_{rr}-\tfrac12v_{r}^2+\frac{n-1-v}{r}v_{r}+\frac{2(n-1)}{r^2
  }\left(  v-v^2\right)
\end{equation}
emerging from singular initial data which satisfy
\begin{equation}
  \vini(r)=[1+o(1)]v_0(r)\qquad\text{as }r\searrow0,
  \label{AndSoItAllBegins}
\end{equation}
where
\begin{equation}
  v_0(r) \doteqdot\frac{\frac14(n-1)}{-\log r}.  \label{eq:define-v-tilde}
\end{equation}
Note that a smooth \emph{forward evolution} of (\ref{AndSoItAllBegins})
must satisfy $\lim_{t\searrow0}v(r,t)=\vini(r)$ at all points where the
initial metric is nonsingular, i.e.~at all $r>0$.

There are only two ways that the solution (\ref{ForwardEvolution}) can be
complete.  The first is that $v$ satisfies the smooth boundary condition
$v(0,t)=1$ for all $t>0$ that it exists.  Because $\lim_{r\searrow0}\vini(r)=0$,
this is incompatible with the initial data, meaning that $v$ immediately jumps
at the singular hypersurface $\{0\}\times\cS^{n}$, yielding a compact
forward evolution that heals the singularity with a smooth $n$-ball.  In
this case, the sectional curvatures immediately become bounded in space, at
least for a short time.  The second possibility is that $v$ remains
singular at $r=0$, but the distance to the singularity measured with
respect to $g(t)$ immediately becomes infinite, yielding a noncompact
forward evolution, necessarily with unbounded sectional curvatures.  In
Section~\ref{Compact}, we show that the second possibility cannot occur: we
prove that any smooth complete rotationally symmetric forward Ricci flow
evolution from a rotationally symmetric neckpinch is compact.  (See
Theorem~\ref{CompactResult}.) This result contrasts with Topping's
observation that flat $\R^2$ punctured at a single point can evolve under
Ricci flow by immediately forming a noncompact smooth hyperbolic cusp,
thereby pushing the ``singularity''\ to infinity \cite{Topping09}. In this
section, we also prove that any smooth forward evolution satisfies a unique
asymptotic profile. (See Theorem~\ref{pinch}.)

In Section~\ref{sec:Formal}, we derive formal matched asymptotics for a
solution emerging from a neckpinch singularity.  This discussion is
intended to motivate the rigorous arguments that follow.  Because the
initial $\lim_{r\searrow0}\vini(r)=0$ and boundary $v(0,t)=1$ conditions
are incompatible, one cannot have $v(r,t)\to \vini(r)$ uniformly as
$t\searrow0$.  The solution must resolve this incompatibility in layers.
Consequently, we describe the asymptotic behavior of the formal solution
for small $t>0$ by splitting the $(r,t)$ plane into three regions:
\begin{center}
  \begin{tabular} [c]{ccc}\\
    \hline\vrule width 0pt height 10pt depth 5pt {\bfseries inner} &
    {\bfseries parabolic} & {\bfseries outer}\\ \hline \vrule width 0pt
    height 18pt depth 12pt $r\sim\sqrt{\dfrac{t}{-\log t}}$ & $r\sim \sqrt
    t$
    & $r\sim 1$\\
    \hline
  \end{tabular}
  \medskip
\end{center}

In Section~\ref{sec:Informal}, we make these asymptotics rigorous by
constructing suitable sub- and super- solutions in each space-time region
and ensuring that they overlap properly.

Finally, in Section~\ref{sec:Compactness}, we prove a compactness result that
shows that a subsequence of regularized solutions does converge to a smooth
forward evolution from a neckpinch singularity.
(See Lemmas~\ref{lem:convergence-away-from-NP} and \ref{lem:singularity-off}.)

Here is a slightly glossed summary of our results.  (For more detail,
including how the constants are chosen, see Lemmas~\ref{thm:can-glue} and
\ref{thm:can-glue-too} and Theorems~\ref{thm:outerlayer-subsuper},
\ref{thm:paraboliclayer-subsuper}, and \ref{InnerResult}.)

\begin{theorem}\label{thm:main}
  For $n\geq2$, let $(\cS^{n+1},g_0)$ denote a singular metric arising as
  the limit as $t\nearrow T_0$ of a rotationally symmetric neckpinch
  forming at time $T_0=0$.  Then there exists a complete smooth forward
  evolution $(\cS^{n+1},g(t):T_0<t<T_1)$ of $g_0$ by Ricci flow.

  Any complete smooth forward evolution is compact and satisfies a unique
  asymptotic profile as it emerges from the singularity.  In a local
  coordinate $0<r<r_*\ll1$ such that the singularity occurs at $r=0$ and
  the metric is
  \[
  g(r,t)=\frac{(dr)^2}{v(r,t)}+r^2\gcan ,
  \]
  this asymptotic profile is as follows.

  \textbf{Outer region: }for $c_1\sqrt{t}<r<c_2$, one has
  \[
  v(r,t)=[1+o(1)]\frac{n-1}{-4\log r}\left[ 1+2(n-1)\frac{t}{r^2}\right]
  \qquad\text{uniformly as }t\searrow0.
  \]

  \textbf{Parabolic region: }let $\rho=r/\sqrt{t}$ and $\tau=\log t$; then
  for $c_3/\sqrt{-\tau}<\rho<c_4$, one has
  \[
  v(r,t)=[1+o(1)]\frac{n-1}{-2\tau}\left[ 1+\frac{2(n-1)}{\rho^2}\right]
  \qquad\text{uniformly as }t\searrow0.
  \]

  \textbf{Inner region: }let $\sigma=\sqrt{-\tau}\rho=\sqrt{-\tau/t}\,r$;
  then for $0<\sigma<c_5$, one has
  \[
  v(r,t)=[1+o(1)]\bry (\frac{\sigma}{n-1})\qquad\text{uniformly as
  }t\searrow0,
  \]
  where $\dfrac{(d\sigma)^2}{\bry(\sigma)}+\sigma^2\gcan $ is the Bryant
  soliton metric.
\end{theorem}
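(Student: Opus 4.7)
The plan is to combine a regularization-and-limit construction for existence with a region-by-region barrier analysis for the asymptotic profile. First I would invoke the reduction of Section~\ref{sec:Setup} to translate a rotationally symmetric forward evolution of $g_0$ into a smooth positive solution $v(r,t)$ of the scalar quasilinear PDE with singular initial data $\vini(r)=[1+o(1)]v_0(r)$ as $r\searrow0$. Compactness of any forward evolution and the \emph{existence} of a unique asymptotic profile (as an abstract fact, independent of its identification) are already available from Section~\ref{Compact}, so it remains to construct one such forward evolution and to pin down the leading behavior of $v$ in each of the three regions.

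For existence I would approximate $g_0$ by smooth rotationally symmetric metrics $g_{0,i}$ whose warping functions agree with $\vini$ on $\{r\geq r_i\}$, with $r_i\searrow0$, and are interpolated across the singular sphere in a uniformly controlled way. Short-time existence supplies smooth solutions $v_i(r,t)$ on $[0,T_i)$; the compactness lemmas~\ref{lem:convergence-away-from-NP} and \ref{lem:singularity-off} of Section~\ref{sec:Compactness} then extract a subsequence converging smoothly on compact subsets of $(0,r_*)\times(0,T_1)$ to a limit $v(r,t)$. Since each $v_i$ coincides with $\vini$ outside $\{r<r_i\}$, the limit smoothly attains $\vini$ away from the singularity, so it \emph{is} a forward evolution in the sense of the introduction.

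The heart of the proof is to identify the asymptotic profile by sandwiching the approximations $v_i$ between explicit sub- and super-solutions in each region. In the outer region I would take a time-perturbed version of $v_0$ of the form $v_0(r)[1+2(n-1)t/r^2]$ with small corrections and verify the differential inequalities directly; since $v$ is logarithmically small, the diffusion term $vv_{rr}$ is subdominant and the PDE is essentially an ODE in $t$. In the parabolic region I would pass to the similarity variables $(\rho,\tau)=(r/\sqrt t,\log t)$; after rescaling $v$ by the appropriate logarithmic factor, the linearized equation is a standard heat-type operator whose natural stationary solution yields precisely $[1+2(n-1)/\rho^2]$. In the inner region I would build the barriers around the Bryant soliton itself: the scaling $\sigma=\sqrt{-\tau/t}\,r$ is chosen so that $\bry(\sigma/(n-1))$ is stationary at leading order, and one perturbs to produce sub- and super-solutions. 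Each pair of regional barriers must match to leading order on its overlap strip ($r\sim\sqrt t$ for outer/parabolic, $\sigma\sim c$ for parabolic/inner), which is the content of the gluing lemmas~\ref{thm:can-glue} and \ref{thm:can-glue-too}. The parabolic maximum principle applied region by region then pins each $v_i$ between the barriers \emph{uniformly in} $i$, as in Theorems~\ref{thm:outerlayer-subsuper}, \ref{thm:paraboliclayer-subsuper}, and \ref{InnerResult}, and the estimates descend to the limit $v$.

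The main obstacle I expect is the inner region. Unlike the outer and parabolic regions, where the leading profile can essentially be read off from scaling and linearization around a static background, the inner region must \emph{select} a specific eternal soliton (the Bryant soliton) from the family of a priori admissible local profiles emerging from the singularity, and the equation there is genuinely nonlinear and degenerate at $r=0$. Constructing barriers that hug the Bryant profile closely enough to match the parabolic layer, while remaining honest sub- and super-solutions for the full quasilinear PDE, is the delicate step that forces the explicit constants and error terms and drives the bulk of the work in Sections~\ref{sec:Informal} and \ref{sec:Compactness}.
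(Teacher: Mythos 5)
Your proposal follows essentially the same route as the paper: reduce to the scalar PDE for $v$, regularize the metric near the pole to obtain smooth solutions, prove compactness of those solutions, extract a subsequential limit, show the limit attains $\vini$ away from the singularity, and pin down the asymptotic profile by constructing sub- and super-solutions in outer, parabolic, and inner regions that glue on overlaps. The compactness of any smooth forward evolution and the trapping of its $v$ between the barriers (Theorems~\ref{CompactResult} and~\ref{pinch}) give the stated uniqueness of the profile exactly as you describe.

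One conceptual inaccuracy worth flagging: in the parabolic region you describe the rescaled equation as ``a standard heat-type operator whose natural stationary solution yields $[1+2(n-1)/\rho^2]$.'' In fact, after substituting $W=(-\tau)\,v$ and passing to $(\rho,\tau)$, the leading balance as $\tau\to-\infty$ in~\eqref{eq:W-PDE} is governed by the \emph{first-order} transport operator $\cL_\PAR$, not by a second-order heat operator; the profile $W_0(\rho)=\tfrac{n-1}{2}[1+2(n-1)/\rho^2]$ is the kernel of $\cL_\PAR$. The paper explicitly calls out this feature as counterintuitive, attributing it to the degeneracy of $v_t=\cF[v]$ at $v=0$. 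This does not change your overall strategy, but building barriers on the belief that one is perturbing a parabolic fixed point rather than a first-order ODE solution would lead to the wrong error structure, so it matters in carrying the construction through.
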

We recall the construction and some relevant properties of the Bryant soliton
metric in Appendix~C.
\medskip

Geometrically, the results above admit the following interpretation.  In the inner
layer, going backward in time, one sees a forward evolution emerging from
either side of a neckpinch cusp by forming a Bryant soliton, which is (up
to homothety) the unique complete, rotationally symmetric steady gradient
soliton on $\R^{n+1}$. This behavior is unsurprising: as fixed points of
Ricci flow modulo diffeomorphism and scaling, solitons are expected to
provide natural models for its behavior near singularities.  Our
asymptotics confirm this expectation and give precise information on the
length and time scales on which the forward evolution is modeled by the
Bryant soliton.

It seems reasonable to expect that a solution will continue to exist if one
admits initial data that are small, possibly asymmetric, perturbations
of the data considered here. It also seems reasonable that a uniqueness
statement should hold, but proving that would likely require methods different
from those used in this paper.

\section{Recovering from a neckpinch singularity}
\label{sec:Setup}
\subsection{The initial metric and its regularizations}
\label{sec:initialData}

We will construct Ricci flow solutions starting from singular limit metrics
resulting from rotationally symmetric neckpinches.  For a review of such
metrics, see Appendix~\ref{sec:DifferentiateTheAsymptotics} and, in
particular, Lemma~\ref{cor:AK2-Differentiable}.

Let $\NP$ and $\SP$ be the north and south poles of the sphere $\cS^{n+1}$.
We identify the (doubly) punctured sphere $\cS^{n+1}\setminus\{\NP, \SP\}$
with $(-1, 1)\times \cS^n$.  On this punctured sphere, we then consider
initial metrics $g_0$ of the form
\begin{equation}
  \label{eq:GeneralInitialData}
  g_0 = \varphi_0(x)^2(dx)^2 + \psi_0(x)^2\gcan
\end{equation}
where $\varphi_0, \psi_0$ are smooth functions on $(-1, 1)$.
Table~\ref{tab:initialdata} lists the assumptions we make about the initial
metric. (These assumptions are satisfied by the singular limits of the
solutions studied in \cite{AK07}.) To make our assumptions geometric, we break
gauge invariance by choosing distance $s$ to the north pole as the
preferred coordinate.  In this coordinate, the initial metric then appears as
\begin{equation} \label{eq:InitialInitialData} g_0=(ds)^2+\psi_0(s)^2\gcan
\end{equation}
where $\ell$ is the distance between north and south poles in the metric
$g_0$, and where $\psi_0\in C^\infty(J)\cap C^1(\bar{J})$, with
$J=(0,\ell)$.

\begin{table}[bt] \vrule\vbox {\hrule
  \begin{align*}
    \mathrm{(M1)\;\;}&
    \psi_0(s) > 0 \text{ for all } s\in J \\
    \mathrm{(M2)\;\;}&
    \psi_0(0) = \psi_0(\ell) = 0 \\
    \mathrm{(M3)\;\;}&
    \psi_0'(\ell) = -1 \\
    \mathrm{(M4)\;\;}& \psi_0(s)^2 = \bigl(\tfrac{n-1}4 + o(1)\bigr)
    \frac{s^2}{-\log s}
    \quad (s\searrow 0)\\
    \mathrm{(M5)\;\;}& \psi_0(s)\psi_0'(s) = \bigl(\tfrac{n-1}4 +
    o(1)\bigr) \frac{s}{-\log s}
    \quad (s\searrow 0) \\
    \mathrm{(M6)\;\;}&
    |\psi_0'(s)| \le 1 \quad (0<s<\ell)\\
    \mathrm{(M7)\;\;}&
    \exists_{r_\#>0}\;\psi_0'(s) \ne 0 \text{ whenever }\psi_0(s)<2r_\#\\
    \mathrm{(M8)\;\;}& \exists_{\amax <\infty}\forall_{s\in J}\;
    |a_0(s)|\leq \amax \text{ (where $ a_0(s) = \psi_0\psi_0'' -
    {\psi_0'}^2 + 1$)}
  \end{align*}
  \hrule}\vrule
  \caption{Assumptions on the initial metric $g_0 = (ds)^2 +
  \psi_0(s)^2\gcan$.}
  \label{tab:initialdata}
\end{table}

\begin{figure}[b]
  \centering
  \includegraphics{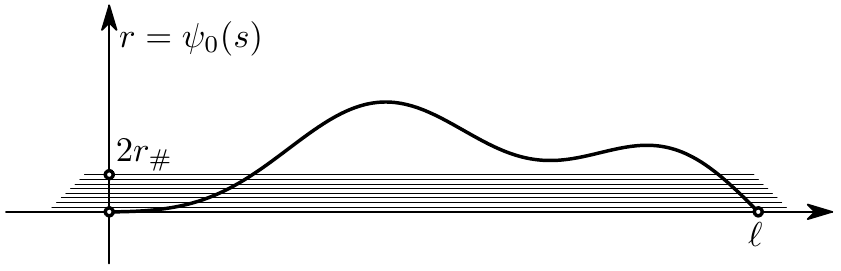}\caption{The initial data}
  \label{fig:initdata}
\end{figure}

Since the initial metric $g_0$ is singular at the north pole, the standard
short-time existence theory for Ricci flow does not provide a solution.
Because our initial metric is of the form \eqref{eq:InitialInitialData}
with $\psi_0(s) \sim s|\log s|^{-1/2}$ for $s\to 0$, the volume of a ball
with radius $s$ centered at the singular point is $(C+o(1)) s^{n+1}|\log
s|^{-n/2} = o(s^{n+1}) $.  So no continuous metric $\tg$ on $\cS^{n+1}$
exists for which $\exists c>0 : c \tg \le \gcan \le c^{-1}\tg$. Therefore,
neither the methods of Simon \cite{Simon02} nor of Koch--Lamm \cite{KL09}
can be used here to construct a solution.

Instead, we will construct a solution by regularizing the metric in a
small neighborhood of the north pole, yielding a smooth metric $g_{\omega}$
for each small $\omega>0$, with $g_\omega \to g_0$ as $\omega\searrow0$.
Each of these may be regarded as a ``surgically modified'' solution, obtained
by replacing the singularity with a smooth $(n+1)$-ball.
The short-time existence theorem then guarantees the existence of a
solution $g_{\omega}(t)$ on some time interval $0<t<T_{\omega}$, with
$g_{\omega}$ as initial data.  We will obtain a lower bound $T>0$ for
$T_{\omega}$ and show that a subsequence of the $g_{\omega}$ converges in
$C^{\infty}$ for all $0<t<T$.

The lower bound for $T_\omega$ follows easily from our previous
characterization of singular solutions in \cite{AK04}.  Obtaining
compactness of the solutions thus obtained turns out to be harder and will
consume most of our efforts in Section~\ref{sec:Compactness}.

In the following description of the regularized initial metric $g_\omega$,
we will have to refer to a number of constants and functions which are
properly defined in Sections \ref{sec:Formal} and \ref{sec:Informal}.  We
note that while we have phrased the description in terms of the
geometrically defined quantities $s, \psi, \psi_s$, it applies to metrics of
the form (\ref{eq:GeneralInitialData}).

For given small $\omega>0$, we split the manifold $\cS^{n+1}$ into two
disjoint parts, one of which is the small neighborhood $\cN_\omega$ of the
north pole in which $\psi_0(s) < \rho_*\sqrt \omega$.

On $\cS^{n+1}\setminus \cN_\omega$, we let our regularized metric $g_\omega$
coincide with the original metric $g_0$.  Within $\cN_\omega$, we let
$g_\omega$ be a metric of the form $g_\omega = (ds)^2 +
\psi_\omega(s)^2\gcan$, where $\psi_\omega$ is a monotone function, at
least when $\psi_\omega(s) \leq \rho_*\sqrt{\omega}$.  In $\cN_\omega$, we
can therefore choose $r=\psi_\omega(s)$ as coordinate. In this
coordinate, we require $g_\omega$ to be of the form
\begin{equation} \label{eq:regularized-metric} g_\omega =
  \frac{(dr)^2}{v_\omega(r)} + r^2\gcan,
\end{equation}
where $v_\omega(r)$ satisfies
\begin{equation}
  v^-(r, \omega) \leq v_\omega(r) \leq v^+(r, \omega).
  \label{eq:reg-metric-sandwich}
\end{equation}
Here $v^\pm(\cdot, \omega)$ are the sub- and super- solutions constructed
in Section~\ref{sec:Informal}, and evaluated at time $t=\omega$.

Recall that the scale-invariant difference of sectional curvatures,
\[
a\doteqdot(L-K)\psi^2\quad\left( =\psi\psi_{ss}-\psi_{s}^2+1\right)
\]
is uniformly bounded from above and below under Ricci flow
\cite[Corollary~3.2] {AK04}.  After possibly increasing the constant
$\amax$, we may assume that our regularized metrics $g_\omega$ satisfy
hypotheses (M1)--(M8) in Table~\ref{tab:initialdata}, with the exception
that near $s=0$ one has $\psi_{\omega}'(s) \to +1$ instead of (M4) and (M5)
\footnote{This actually follows from the fact that $\psi'(s) =
\sqrt{v(r,\omega)}$}.  Also, because we have modified the metric $g_0$
near the north pole, the distance $\ell_\omega$ between north and south
poles will not be exactly $\ell$, although we do have $\ell_\omega = \ell +
o(1)$ as $\omega\searrow0$.

Solving Ricci flow starting from $g_\omega$ produces a family of metrics
\[
g_{\omega}(t)=\varphi(x,t)^2(dx)^2+\psi(x,t)^2\gcan
\]
which evolve according to (\ref{eq:RF-psi}).  For these metrics, one will
then have both
\[
|\psi_{s}|\leq1
\]
and
\[
|a_{\omega}|=|\psi\psi_{ss}-\psi_{s}^2+1|\leq \amax.
\]
Therefore, again by (\ref{eq:RF-psi}),
\begin{equation}\label{eq:ddt-psi-squared-bound}
  |(\psi^2)_{t}|=|2a+2n(\psi_{s}^2-1)|\leq2(\amax+n)
\end{equation}
is bounded.  In particular, one has $(\psi^2)_{t}<2(\amax+n)$ at critical
points of $x\mapsto\psi(x,t)$.  Thus if one sets
\[
T_0=\frac{r_{\#}^2}{\amax+n},
\]
then condition~(M7) implies that for $0<t<T_0$, the function $x\mapsto
\psi(x,t)$ has no critical points with $\psi(x,t)\leq{r_{\#}}$.  In
particular, there can be no new neckpinch before $t=T_0$.  We have thus
proved
\begin{lemma}
  \label{lem:life-time-lower-bound}
  $\displaystyle T_\omega \geq T_0$ for all $\omega>0$.
\end{lemma}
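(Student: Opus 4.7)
Plan: The smooth solution $g_\omega(t)$ starting from $g_\omega$ persists as long as its curvatures stay bounded. Under the a priori estimates $|a_\omega|\le \amax$ and $|\psi_s|\le 1$ (both preserved by rotationally symmetric Ricci flow), the sectional curvatures $(1-\psi_s^2)/\psi^2$ and $-\psi_{ss}/\psi$ remain bounded so long as $\psi(\cdot,t)$ is uniformly bounded below on the interior. Hence $T_\omega$ is at least the first time $\psi$ acquires a zero in the interior, and since $\psi$ can vanish in the interior only at a local minimum, it suffices to show that for $0<t<T_0=r_\#^2/(\amax+n)$, every interior critical point of $\psi(\cdot,t)$ has value strictly larger than $r_\#$.

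The main computation is to evaluate the evolution identity \eqref{eq:ddt-psi-squared-bound},
\begin{equation*}
  (\psi^2)_t = 2a_\omega + 2n\bigl(\psi_s^2-1\bigr),
\end{equation*}
at an interior critical point $x_*$ of $\psi(\cdot,t)$ (where $\psi_s=0$); this gives $(\psi^2)_t(x_*,t)=2a_\omega-2n \ge -2(\amax+n)$. Condition~(M7) guarantees that every interior critical point of $\psi_0$ satisfies $\psi_0(x_*)^2\ge 4r_\#^2$. Integrating the pointwise lower bound in time along any smoothly parameterized branch of interior critical points then yields
\begin{equation*}
  \psi(x_*(t),t)^2 \ge 4r_\#^2 - 2(\amax+n)t > 2r_\#^2 \qquad (0\le t<T_0),
\end{equation*}
so $\psi > \sqrt{2}\,r_\# > r_\#$ along every such branch.

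The main obstacle is that the set of interior critical points is not static: new interior local minima of $\psi(\cdot,t)$ could in principle be born via saddle-node bifurcations at some time $t_b\in(0,T_0)$, so the preceding estimate over ``existing branches'' does not immediately cover all critical points. To close this gap, I would set
\begin{equation*}
  \nu(t):=\inf\{\psi(x_*,t)^2:x_*\in(0,\ell_\omega),\ \psi_s(x_*,t)=0\}
\end{equation*}
and prove $\nu(t)\ge 4r_\#^2-2(\amax+n)t$ on $[0,T_\omega)$ by a first-time-of-failure contradiction. At any alleged birth point $(x_b,t_b)$, the location $x_b$ is itself a degenerate interior critical point of $\psi(\cdot,t_b)$, so the critical-point lower bound on $(\psi^2)_t$ applies there; joint smoothness of $\psi$ in $(x,t)$ together with the fact that $\psi^2(x_b,t_b)$ is the limit, as $t\nearrow t_b$, of $\psi^2$ at the (non-critical) inflection points that coalesce into $x_b$, forces the bound to be inherited at the moment of birth. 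Consequently $\nu(t)>r_\#^2$ throughout $[0,T_0)$, no interior neckpinch can form before $T_0$, and $T_\omega\ge T_0$ follows.
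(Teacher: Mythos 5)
Your main estimate is exactly the paper's: at an interior critical point, $\psi_s=0$ gives $(\psi^2)_t=2a_\omega-2n\ge -2(\amax+n)$, and combining this with $(M7)$ (initial critical points have $\psi^2\ge 4r_\#^2$) and the definition $T_0=r_\#^2/(\amax+n)$ yields $\psi^2>2r_\#^2$ at all critical points for $t<T_0$. You also have the correct sign; the paper's text writes ``$(\psi^2)_t<2(\amax+n)$ at critical points,'' which is an upper bound, whereas what the argument actually needs (and what you correctly state) is the lower bound $(\psi^2)_t>-2(\amax+n)$. The reduction to ``no interior critical point of $\psi$ gets small before $T_0$'' via the a priori bounds on $a$ and $\psi_s$ and the curvature formulas $K,L$ is also the same as the paper's.

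The subtlety you raise --- that the time-integration is along branches of critical points, and new critical points could appear via a saddle-node --- is a real gap that the paper's one-line integration glosses over, and you are right to worry about it. But your proposed fix does not close it. At a saddle-node birth $(x_b,t_b)$, the two critical points \emph{emanate from} $(x_b,t_b)$ as $t$ increases past $t_b$; for $t<t_b$ there are no nearby critical points, only points where $\psi_{ss}=0$ but $\psi_s\ne 0$. Those inflection points are not critical points, are not constrained by $(M7)$, and carry no lower bound on $\psi^2$, so continuity of $\psi$ in $(x,t)$ gives no control on $\nu(t_b)$, and your first-time-of-failure argument breaks down precisely at a birth. The instantaneous estimate $(\psi^2)_t(x_b,t_b)\ge -2(\amax+n)$ bounds a derivative, not a value. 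The standard way to rule out births is a Sturm-type nonincrease-of-zeros theorem applied to $w=\psi_s$, which satisfies a linear parabolic equation (the $\psi_s$ equation displayed just before \eqref{eq:u-pde}); since $\psi_s\to\pm1$ at the poles, one restricts to a compact interior region, concludes the number of zeros of $\psi_s$ is nonincreasing, and then every critical point at time $t<T_0$ does trace back to a time-$0$ critical point, to which $(M7)$ applies. With that ingredient your proof (and the paper's) is complete; without it, the integration step is not justified.
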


\medskip

Since $s \mapsto \psi_\omega(s, t)$ is monotone when $\psi_\omega<r_\#$, a
function $r\mapsto u(r,t)$ is defined for $0<r<r_{\#}$ for which one has
\begin{equation}\label{eq:u-defined}
  \psi_{s}(x,t)=u(r,t)=u\bigl(\psi(x,t),t\bigr)
\end{equation}
for all $x$ on the northern cap of the sphere where $\psi\leq{r_{\#}}$
holds.  With respect to these local coordinates, one may write the
metric~(\ref{eq:InitialInitialData}) as
\[
g=\frac{(dr)^2}{u(r)^2}+r^2\gcan .
\]
The sectional curvatures $K$ and $L$ defined in (\ref{eq:Sectionals}) are
computed in these coordinates by $K=-uu_{r}/r$ and $L=(1-u^2)/r^2$.

\subsection{Evolution equations for $u(r,t)$ and $v(r,t)=u(r,t)^2$}

To derive a differential equation for $u$, we first recall that $\psi_s$
satisfies
\[
\partial_{t}\psi_{s}=\psi_{sss}+(n-2)\frac{\psi_{s}}{\psi}\psi_{ss}
+(n-1)\frac{\psi_{s}(1-\psi_{s}^2)}{\psi^2}.
\]
On the other hand, from the definition (\ref{eq:u-defined}) of $u$, we find
by the chain rule that
\[
\partial_{t}\psi_{s}=\psi_{t}u_{r}+u_{t}.
\]
Combining these two relations, we get $u_{t}=\partial_{t}\psi_{s}-\psi
_{r}u_{r}$ in terms of $\psi$ and its $s$ derivatives.  To rewrite this in
terms of $u$ and its derivatives, we use the fact that when $t$ is fixed,
it follows from $u=\psi_{s}$ that
\[
\frac{\partial}{\partial s}=u\frac{\partial}{\partial r},\quad\text{ and hence}
\quad\psi_{ss}=uu_{r},\quad\psi_{sss}=u\bigl(uu_{r}\bigr)_{r}.
\]
One then arrives at a parabolic \textsc{pde} for $u$, to wit,
\begin{equation}
  u_{t}=u\bigl(uu_{r}\bigr)_{r}-uu_{r}^2+\frac{n-1-u^2}{r}u_{r}
  +(n-1)\frac{u(1-u^2)}{r^2}.  \label{eq:u-pde}
\end{equation}

The quantity $v=u^2$ satisfies a similar but slightly simpler equation,
\begin{equation}
  v_{t}=\cF [v], \label{eq:v-t}
\end{equation}
where
\begin{equation}
  \cF [v] \doteqdot vv_{rr}-\tfrac12v_{r}^2+\frac{n-1-v}{r}v_{r}
  +\frac{2(n-1)}{r^2}v(1-v).  \label{eq:F-def}
\end{equation}
In terms of $v$, we have
\begin{equation}
  K = -\frac{v_r}{2r}, \qquad
  L = \frac{1-v}{r^2}.
  \label{eq:KandLfrom-v}
\end{equation}

\subsection{The shape of the singular initial data in the $r$ variables}

We now derive the asymptotic behavior of the unmodified function $v(r,t)$
as $r\searrow0$ at time $t=0$.  First we estimate $\uini(r)=u(r,0)$.  By
equation~(\ref{initial-psi}) of Lemma~\ref{cor:AK2-Differentiable} in
Appendix~\ref{sec:DifferentiateTheAsymptotics}, the quantity
$r(s)=\psi(s,0) $ satisfies
\begin{equation}
  r=\left[  \tfrac12\sqrt{n-1}+o(1)\right]  \frac{s}{\sqrt{-\log{s}}}
  \label{eq:r_asymptotics}
\end{equation}
as the arc-length variable $s\searrow0$.  If this were an exact equation,
one could solve it explicitly for $s$, yielding
\[
s=r\sqrt{\frac2{n-1}\;W\left( \frac{n-1}{2r^2}\right) },
\]
where $W$ denotes the Lambert-W (product-log) function, i.e.~the inverse of
$y\mapsto ye^{y}$ restricted to $(0,\infty)$.  For our purposes, it
suffices to notice that (\ref{eq:r_asymptotics}) implies that $\log
r=\left[ 1+o(1)\right] \log s$ as $s\searrow0$, hence that
\begin{equation}
  s=\left[  \frac2{\sqrt{n-1}}+o(1)\right]  r\sqrt{-\log r}
\end{equation}
as $r\searrow0$.  Again by Lemma~\ref{cor:AK2-Differentiable}, it is
permissible to differentiate (\ref{eq:r_asymptotics}), whereupon one finds
in equation~(\ref{initial-psi_s}) that the initial data $\uini(\cdot)=\psi
_{s}(\cdot,0)$ satisfies
\begin{equation}
  \uini(r)=\bigl[\frac{\sqrt{n-1}}2+o(1)\bigr]\frac1{\sqrt{-\log r}}
\end{equation}
as $r\searrow0$.  Squaring, we conclude that $v$ initially must satisfy
\begin{equation}\label{eq:vcusp}
  \vini(r)=\bigl(\frac{n-1}4+o(1)\bigr)\frac1{-\log r}\quad(r\searrow0).
\end{equation}

\section{Possible complete smooth solutions}
\label{Compact}

In this paper, we construct smooth solutions $g(t)$ of Ricci flow on the
compact sphere $\cS^{n+1}$. However, our initial data are only smooth on the
punctured sphere $\cS^{n+1}\setminus\{\NP\}$.  Inspired by
Topping's example in \cite{Topping09}, one could also look for solutions on
the punctured sphere which at all positive times $t$ represent complete
metrics.  Such solutions must necessarily be unbounded.  Below, we will show
that such solutions do not exist for our initial metric.  Consequently, all
possible smooth solutions starting from our singular initial metric extend
to the compact sphere.  Their sectional curvatures $K$ and $L$ must be
bounded; and in view of $L=(1-v)/r^2$, we find that the quantity $v(r,t)$
must satisfy
\begin{equation} \label{eq:qualitative-v-bound} |v(r, t) - 1| \leq \sup
  |L_{g(t)}| \; r^2.
\end{equation}
Thus
\begin{equation} \label{eq:v-boundary-condition} v(0, t) = 1
\end{equation}
is the only relevant boundary condition at $r=0$ for our problem.

Our existence proof of a solution starting from the singular initial metric
involves the construction of a family of sub- and super- solutions
$v^\pm_{\varepsilon,\delta}$, indexed by parameters $\varepsilon,
\delta>0$.  In this section, we show that the $v$ function corresponding to
any smooth solution must lie between the sub- and super- solutions
$v^{\pm}_{\varepsilon,\delta}$ to be constructed in Section~\ref{sec:Informal}.

\subsection{Lower barriers for $v$}
We first show that any such solution remains positive for small $r$ and
$t$, with an estimate that proves the following result.

\begin{theorem}
  \label{CompactResult}Any smooth complete rotationally symmetric
  forward Ricci flow evolution from a rotationally symmetric
  neckpinch singularity is compact with $v(0,t)=1$ for all $t>0$ that it exists.
\end{theorem}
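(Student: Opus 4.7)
The plan is to force $v(r,t)$ to stay bounded below near $r=0$ by a positive lower barrier, thereby precluding the noncompact alternative and forcing $v(0,t)=1$. The logic rests on two observations available from \eqref{eq:KandLfrom-v}: first, if $v(\cdot,t)$ extends continuously to $r=0$ with $v(0,t)<1$, then the sectional curvature $L=(1-v)/r^2$ blows up, contradicting smoothness of $g(t)$ at the compactified point; second, the metric distance from a fixed $r_*>0$ to $\{r=0\}$ equals $\int_0^{r_*} dr/\sqrt{v(r,t)}$, which is finite whenever $\liminf_{r\searrow0} v(r,t)>0$. Ruling out noncompact forward evolutions therefore amounts to obtaining a uniform positive lower bound for $v$ near the north pole at each positive time.

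To produce such a bound, I would construct a lower barrier $\underline{v}(r,t)$ patterned on the Bryant profile $\bry$ rescaled via $\sigma = \sqrt{-\tau/t}\,r$ (or a slightly simpler self-similar ansatz close to it) and verify that it is a genuine subsolution of $v_t=\cF[v]$. By construction one has $\underline{v}(0,t)=\bry(0)=1$ for all $t>0$, while the scaling forces $\underline{v}(r,0)\equiv 0\le \vini(r)$ for $r>0$. The comparison between $v$ and $\underline{v}$ is carried out on rectangles $[r_0,r_*]\times[t_0,T_0]$, after which one lets $r_0,t_0\searrow 0$. On the initial slice $t=t_0$, the assumption that $v$ is a smooth forward evolution of the singular data yields $v(\cdot,t_0)\to \vini$ uniformly on compact subsets of $(0,r_*]$, which dominates $\underline{v}(\cdot,t_0)$ for $t_0$ small; on the lateral boundary $r=r_*$ the barrier is arranged to be strictly dominated by $v$; and on $r=r_0$ one uses the bound $\underline{v}\le 1$ together with the elementary estimate that any smooth complete solution satisfies $v\le 1$ in the region where $\psi_s$ retains a sign, which follows from condition (M6) by the usual Hamilton-type maximum principle applied to $\psi_s^2-1$. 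The parabolic comparison principle then yields $v\ge \underline{v}$, whence $\liminf_{r\searrow 0} v(r,t)=1$ for every $t\in(0,T_0)$.

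The main obstacle is the construction and verification of $\underline{v}$: since the Bryant metric is a steady soliton for Ricci flow while our subsolution is supposed to live in the different self-similar scaling $\sigma=\sqrt{-\tau/t}\,r$, one must check that the corrections introduced by this time-dependent reparameterization enter with a favorable sign in $\cF[\underline v]-\underline v_t$, and that the profile can be damped to $0$ just outside the inner region without spoiling that sign; the precise choice essentially recovers (a simplified version of) the subsolution $v^-_{\varepsilon,\delta}$ of Section~\ref{sec:Informal}. Once this barrier is in hand, the conclusion follows at once: the lower bound $v\ge \underline{v}$ forces $\int_0^{r_*}dr/\sqrt{v(r,t)}<\infty$, so the metric $g(t)$ closes up across $r=0$ onto the compact sphere $\cS^{n+1}$, and smoothness at the added point compels $v(0,t)=1$.
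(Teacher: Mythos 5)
Your proposal has a genuine gap at the heart of the comparison argument, namely at the left boundary $r=r_0$ of the rectangle $[r_0,r_*]\times[t_0,T_0]$. The parabolic maximum principle requires $v\geq\underline{v}$ on the entire parabolic boundary, including $\{r=r_0\}$. Your barrier is modeled on the Bryant profile and satisfies $\underline{v}(r_0,t)\to 1$ as $r_0\searrow 0$ (at fixed $t>0$), so you would need $v(r_0,t)\geq\underline{v}(r_0,t)\approx 1$ there. But that is essentially the conclusion you are trying to establish: \emph{a priori}, a noncompact forward evolution (\`a la Topping) could have $\liminf_{r\searrow 0}v(r,t)=0$, in which case the comparison at $r=r_0$ simply fails and the argument never gets off the ground. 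The observation ``both $\underline{v}\leq 1$ and $v\leq 1$'' gives nothing in the needed direction $v\geq\underline{v}$. Indeed, in the paper the Bryant-modeled lower barrier $v^-_{\varepsilon,\delta}$ is applied only in Theorem~\ref{pinch}, which invokes Theorem~\ref{CompactResult} as a prior input precisely so that the boundary value $v(0,t)=1$ is available when invoking Lemma~\ref{yamm}. Your argument as stated is therefore circular.

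The paper sidesteps this by deliberately using a weaker, cruder lower barrier that \emph{vanishes} at $r=0$: Lemma~\ref{lem:Cristina} shows $v\geq\bv_{\lambda}(r)=\frac{\lambda(n-1)}{4}\frac{1}{-\log r}$ on $(0,r_0]\times[0,t_0]$, and because $\bv_\lambda(0)=0\leq v$, the comparison needs no boundary information at $r=0$ (the paper notes explicitly that the conclusion is ``independent of the boundary condition $v(0,\cdot)$''). Although $\bv_\lambda\to 0$ as $r\to 0$, this is still enough for compactness, since
\[
\int_0^{r_0}\frac{dr}{\sqrt{v(r,t)}}\leq\frac{2}{\sqrt{\lambda(n-1)}}\int_0^{r_0}\sqrt{-\log r}\,dr<\infty,
\]
whence $d_{g(t)}(\{r=0\},\{r=r_0\})<\infty$, and smoothness at the added point then forces $v(0,t)=1$ via $L=(1-v)/r^2$. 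A secondary issue with your plan: you acknowledge but do not resolve that $\bry(\sigma)$ with $\sigma=\sqrt{-\tau/t}\,r$ is \emph{not} by itself a subsolution (the time-dependent rescaling produces error terms of both signs); the paper's Theorem~\ref{InnerResult} needs the correction term $(1+\varepsilon)\theta\theta_t k^{-2}\cry(k\sigma)$ and the full gluing machinery of Section~\ref{sec:Informal} to make this rigorous, which is considerably more work than the one-line barrier $\bv_\lambda$ that actually does the job here.
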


Given $\lambda\in(0,1)$, define $\bv_{\lambda}(r)$ for $0\leq r<1$ by
$\bv_{\lambda}(0)=0$ and
\begin{equation}
  \bv_{\lambda}(r) \doteqdot\frac{\lambda(n-1)}4\frac1{-\log r}
  \qquad(0<r<1).
\end{equation}
For later use, we note that for $r\in(0,1)$, one has
\begin{gather}
  (\bv_{\lambda})_{r}=\frac{\lambda(n-1)}4\frac1{r(\log r)^2}, \\
  (\bv_{\lambda})_{rr}=\frac{\lambda(n-1)}4\left\{ \frac2{r^2(-\log
  r)^3}-\frac1{r^2(\log r)^2}\right\} .
\end{gather}

Theorem~\ref{CompactResult} is essentially a corollary of the following
observation, which shows that on a sufficiently short time interval,
$\bv_{\lambda}(r)-\varepsilon(1+t)$ is a subsolution of (\ref{eq:v-t}) for
all small $\varepsilon>0$.

\begin{lemma} \label{lem:Cristina} Let $v$ be any nonnegative smooth
  function satisfying
  \begin{equation} \label{eq:YetAnotherEquation} v_{t} \geq  \cF [v], \qquad
    v(r,0)\geq \vini(r),
  \end{equation}
  where $\cF $ is defined by (\ref{eq:F-def}), and $\vini$ satisfies
  (\ref{eq:vcusp}).

  Then $v\geq\bv_{\lambda}$ on $(0,r_0]\times[0,t_0]$ for some small
  positive $r_0$ and $t_0$, independent of the boundary condition
  $v(0,\cdot)$.
\end{lemma}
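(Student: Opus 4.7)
The plan is to construct, for each small $\varepsilon > 0$, a strict subsolution $\tv_\varepsilon(r,t) \doteqdot \bv_\lambda(r) - \varepsilon(1+t)$ of the scalar equation $v_t = \cF[v]$, apply the parabolic maximum principle on a small rectangle $(0, r_0] \times [0, t_0]$ (with $r_0$ and $t_0$ chosen independently of $\varepsilon$) to deduce $v \geq \tv_\varepsilon$, and then let $\varepsilon \searrow 0$. The main technical hurdle will be establishing a \emph{strict} inequality $\cF[\tv_\varepsilon] > -\varepsilon$ whose lower bound on the rectangle is uniform in $\varepsilon$, so that it genuinely survives the limit. The key simplification is that $\tv_\varepsilon$ and $\bv_\lambda$ share spatial derivatives, which are already computed in the excerpt.

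\medskip

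For the subsolution property, write $L \doteqdot -\log r$. Substituting $\bv_\lambda = \frac{\lambda(n-1)}{4L}$, $(\bv_\lambda)_r = \frac{\lambda(n-1)}{4rL^2}$, and $(\bv_\lambda)_{rr} = O(r^{-2}L^{-2})$ into $\cF$ gives
\[
\cF[\tv_\varepsilon] = \frac{n-1}{r}(\bv_\lambda)_r + \frac{2(n-1)}{r^2}\tv_\varepsilon(1-\tv_\varepsilon) + \tv_\varepsilon(\bv_\lambda)_{rr} - \tfrac12(\bv_\lambda)_r^2 - \frac{\tv_\varepsilon(\bv_\lambda)_r}{r}.
\]
The first term equals the positive quantity $\frac{\lambda(n-1)^2}{4 r^2 L^2}$; the second is nonnegative whenever $0 \leq \tv_\varepsilon \leq 1$; and using $0 \leq \tv_\varepsilon \leq \bv_\lambda = O(L^{-1})$, each of the remaining three terms has magnitude $O(r^{-2} L^{-3})$. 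Shrinking $r_0$ by an amount depending only on $\lambda$ and $n$ (\emph{not} on $\varepsilon$) absorbs the $O(L^{-3})$ corrections into half of the leading term, yielding
\[
\cF[\tv_\varepsilon] \geq \frac{\lambda(n-1)^2}{8 r^2 (\log r)^2} > 0 > -\varepsilon = (\tv_\varepsilon)_t
\]
on $(0, r_0]\times[0,t_0]$ in the region where $\tv_\varepsilon \geq 0$.

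\medskip

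For the comparison, set $w \doteqdot v - \tv_\varepsilon$. On the parabolic boundary I verify $w > 0$: at $t=0$, the strict inequality $\lambda<1$ combined with (\ref{eq:vcusp}) gives $\vini(r) - \bv_\lambda(r) \gtrsim (1-\lambda)/L$ on $(0, r_0]$ after further shrinking $r_0$, so $w(r,0) \geq \vini(r) - \bv_\lambda(r) + \varepsilon > 0$; smoothness of $v$ near $(r_0, 0)$ then gives $w(r_0, t) > 0$ on $[0, t_0]$ after shrinking $t_0$, independently of $\varepsilon$; and wherever $\bv_\lambda(r) \leq \varepsilon/2$ (in particular near $r=0$) one has $\tv_\varepsilon \leq -\varepsilon/2 < 0 \leq v$, so $w \geq \varepsilon/2$ automatically and no boundary condition at $r=0$ is needed. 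Suppose for contradiction $w<0$ somewhere. Then at a first touching point $(r^*, t^*)$ one has $w = 0$, $w_r = 0$, $w_{rr} \geq 0$, and $w_t \leq 0$, lying strictly in the interior of the compact subrectangle $\{r : \bv_\lambda(r) \geq \varepsilon/2\} \cap (0, r_0] \times [0, t_0]$. I claim $\tv_\varepsilon(r^*, t^*) > 0$ strictly: otherwise $v = \tv_\varepsilon = 0$ at an interior minimum of the nonnegative function $v$, which forces $v_r(r^*, t^*) = 0$, but $w_r = 0$ together with $(\bv_\lambda)_r > 0$ gives $v_r = (\bv_\lambda)_r > 0$, a contradiction. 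At the touching point, $v = \tv_\varepsilon > 0$, $v_r = (\tv_\varepsilon)_r$, and $v_{rr} \geq (\tv_\varepsilon)_{rr}$ imply
\[
\cF[v] - \cF[\tv_\varepsilon] = v(v_{rr} - (\tv_\varepsilon)_{rr}) \geq 0,
\]
so the supersolution hypothesis $v_t \geq \cF[v]$ forces $v_t \geq \cF[\tv_\varepsilon] > -\varepsilon$, contradicting $v_t \leq (\tv_\varepsilon)_t = -\varepsilon$. Hence $v \geq \tv_\varepsilon$ throughout the rectangle, and letting $\varepsilon \searrow 0$ yields $v \geq \bv_\lambda$ on $(0, r_0] \times [0, t_0]$.
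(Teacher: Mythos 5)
Your proof is correct and takes essentially the same route as the paper: both arguments run a first-contact maximum principle for $v-\bv_\lambda+\varepsilon(1+t)$ and then let $\varepsilon\searrow 0$, and at the touching point the paper's lower bound for $\br^2\cF[v]$ (using $v_r=\bv_r$, $v_{rr}\ge\bv_{rr}$, $v=\bv_\lambda-\varepsilon(1+\bt)$) is exactly your quantity $\br^2\cF[\tv_\varepsilon]$. The only packaging difference is that you phrase the key positivity as "$\tv_\varepsilon$ is a strict subsolution where $\tv_\varepsilon\ge 0$" and evaluate $\cF[\tv_\varepsilon]$ once and for all, whereas the paper evaluates $\cF[v]$ directly at the touching point.
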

We note that the following argument also allows $v$ to be a piecewise
smooth supersolution whose graph only has concave corners, such as the
``glued'' supersolutions we construct in Section~\ref{sec:Informal}.  We will only
write the proof for the smooth case.
\begin{proof}
  Since $\lambda<1$, we may fix $0<r_0\ll1$ so small that $\bv
  _{\lambda}(r)<\min\left\{ \vini(r),\frac14\right\} $ and $(\log
  r)^2>(n-1)/4$ for $0<r<r_0$.  Then fix $t_0>0$ small enough so that
  $v(r,t) $ exists for $0<t\leq t_0$.  By making $t_0$ smaller if
  necessary, we may assume that $\bv_{\lambda}(r_0)<v(r_0,t)$ for $0\leq
  t\leq t_0$.

  Given $\varepsilon>0$, define
  \begin{equation}
    f_{\varepsilon}(r,t) \doteqdot v(r,t)-\bv_{\lambda}(r)+\varepsilon(1+t).
  \end{equation}
  We shall prove that $f_{\varepsilon}(r,t)>0$ for $0<r\leq r_0$.  The
  lemma will follow by letting $\varepsilon$ go to zero.

  To simplify the notation, we henceforth write $f\equiv f_{\varepsilon}$
  and $\bv \equiv\bv_{\lambda}$.  We also set $\mu\doteqdot\lambda(n-1)/4$.

  Observe that $f(r,0)\geq\varepsilon$ for $0<r\leq r_0$ and that both
  $f(0,t)\geq\varepsilon$ and $f(r_0,t)\geq\varepsilon$ hold for $0<t\leq
  t_0$.  To obtain a contradiction, suppose that there exists a first time
  $\bt \in(0,t_0]$ and a point $\br \in(0,r_0]$ such that $f(\br ,\bt )=0$.
  Then by (\ref{eq:YetAnotherEquation}), one has
  \begin{equation}
    0\geq f_{t}(\br ,\bt )=\cF \left[  v(\br ,\bt )\right]
    +\varepsilon.  \label{Contradiction}
  \end{equation}
  We claim that $\cF \left[ v(\br ,\bt )\right] \geq0$.  This contradicts
  (\ref{Contradiction}) and proves the lemma.

  To prove the claim, observe that
  \[
  0=f_{r}(\br ,\bt )=v_{r}(\br ,\bt )-\bv_{r}(\br
  )\qquad\text{and}\qquad0\leq f_{rr}(\br ,\bt )=v_{rr}(\br ,\bt
  )-\bv_{rr}(\br ,\bt ).
  \]
  Thus we obtain
  \begin{align*}
    \br^2\cF \left[ v(\br ,\bt )\right]
    &  =v\br^2v_{rr}-\frac12(\br v_{r})^2+(n-1-v)\br v_{r}+2(n-1)(1-v)v\\
    & \geq v\br^2\bv_{rr}-\frac12(\br \bv_{r})^2
    +(n-1-v)\br \bv_{r}+2(n-1)(1-v)v\\
    & =\frac{\mu}{(\log r)^2}\left\{ n-1-2v-\frac{\mu}{2(\log r)^2}\right\}\\
    &\hspace{20ex} +2v\left\{  (n-1)(1-v)+\frac{\mu}{(-\log r)^3}\right\} \\
    & \geq\frac{\mu}{2(\log r)^2}\left\{ 1-\frac{\mu}{(\log r)^2}\right\} \\
    & >0.
  \end{align*}
  Here we used the facts that $n\geq2$ and $0\leq v<\bv \leq1/4$ at
  $(\br,\bt)$.  This proves the claim and hence the lemma.
\end{proof}

Now we can prove the main result of this subsection.

\begin{proof}[Proof of Theorem~\ref{CompactResult}] Let $\cK_{r_0}$
  denote the complement in $\cS^{n+1}$ of the neighborhood of the north pole $\NP\in\cS^{n+1}$
  in which $\psi_\init<r_0$. For a smooth forward evolution, $\cK_{r_0}$ is
  precompact for small positive time. So the only way a noncompact solution
  could develop is if a neighborhood of the singularity immediately became
  infinitely long. But for all $r\in(0,r_0]$, Lemma~\ref{lem:Cristina} implies that
  \[
  \frac1{\sqrt{v(r,t)}}\leq\frac2{\sqrt{n-1}}\sqrt{-\log r}.
  \]
  It follows that
  \[
  d_{g(t)}(0,r_0)=\int_0^{r_0}\frac{\,dr\,}{\sqrt{v(r,t)}}<\infty,
  \]
  hence that the solution is compact.
  Because the sectional curvature of the metric $g$ on planes tangent to
  $\{r\}\times\cS^{n}$ is $L=(1-v)/r^2 $, the solution will be smooth only
  if $v(0,t)=1$.
\end{proof}

\subsection{Yet another maximum principle}

In this section, we prove that the $v$ function for any smooth, complete
forward evolution with $g_0$ as initial metric is trapped between the sub-
and super- solutions which we will construct in Section~\ref{sec:Informal}.
This implies the claims about the asymptotic profile of the solution in
Theorem~\ref{thm:main}.

We begin by establishing a suitable comparison principle.

\begin{lemma}
  \label{yamm}
  Let $v^-$ and $v^+$ be nonnegative sub- and super- solutions,
  respectively, of $v_t=\cF[v]$. Assume that either $v^-$ or $v^+$
  satisfies
  \begin{equation}\label{eq:yamm-v-conditions}
    v_{rr}\leq C,\quad
    K=-\frac{v_r}{2r}\leq C,\quad
    L=\frac{1-v}{r^2} \leq C
  \end{equation}
  for some constant $C<\infty$ on a compact space-time set $\Xi=[0,\br]
  \times [0,\bt]$.

  If $v^-(\br ,t)\leq v^+(\br,t)$ holds for $0\leq t\leq\bt $, and
  $v^-(r,0)\leq v^+(r,0)$ holds for $0\leq r\leq\br $, then $v^- \leq v^+$
  throughout $\Xi$.
\end{lemma}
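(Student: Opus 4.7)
The plan is to prove the comparison via a parabolic maximum principle on the difference $W = v^+ - v^-$, using a time-exponential perturbation to close the argument at interior points and a spatial barrier to rule out first contact at the coordinate singularity $r=0$. The starting point is to subtract the two defining inequalities $v^+_t\geq \cF[v^+]$ and $v^-_t\leq\cF[v^-]$. Expanding $\cF[v^+]-\cF[v^-]$ and regrouping by $W$, $W_r$, $W_{rr}$ (taking, without loss of generality, the regularity hypothesis to apply to $v^-$) yields the linear differential inequality
\[
W_t \;\geq\; v^+\,W_{rr} \;+\; \left[-\tfrac12(v^+_r+v^-_r)+\tfrac{n-1-v^+}{r}\right]W_r \;+\; c\,W,
\]
with zeroth-order coefficient $c \;=\; v^-_{rr} - v^-_r/r + 2(n-1)(1-v^+-v^-)/r^2$.

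The three pieces of $c$ are bounded from above by $v^-_{rr}\leq C$, $-v^-_r/r = 2K^-\leq 2C$, and (using $v^+\geq 0$) $2(n-1)(1-v^-)/r^2 - 2(n-1)v^+/r^2 \leq 2(n-1)L^-\leq 2(n-1)C$. Hence $c \leq C_0 := (3+2(n-1))C$ uniformly on $\Xi$. This is the crucial cancellation: the singular $1/r$ and $1/r^2$ prefactors in $c$ are absorbed exactly by the geometric bounds on the sectional curvatures $K$ and $L$, without which the whole argument collapses.

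Next I would introduce the perturbation $\tilde W = e^{-\lambda t}W + \varepsilon$ with $\lambda > C_0$ and $\varepsilon>0$. A direct transformation converts the differential inequality above to
\[
\tilde W_t \;\geq\; v^+\,\tilde W_{rr} + b\,\tilde W_r + (c-\lambda)\tilde W + \varepsilon(\lambda-c),
\]
in which the last term is strictly positive by the choice of $\lambda$. The hypotheses at $r=\br$ and $t=0$ give $\tilde W\geq\varepsilon>0$ on those pieces of the parabolic boundary. At an interior first-contact point $(r_\ast,t_\ast)$ with $\tilde W=0$ and $r_\ast\in(0,\br)$, the standard signs $\tilde W_t\leq 0$, $\tilde W_r=0$, $\tilde W_{rr}\geq 0$ combine with $(c-\lambda)\tilde W = 0$ to force $0\geq\varepsilon(\lambda-c)>0$, a contradiction. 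Sending $\varepsilon\searrow 0$ then gives $W\geq 0$.

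The main obstacle I anticipate is that no boundary condition is supplied at $r=0$, so the first contact $(r_\ast,t_\ast)$ could in principle sit on the axis. To rule this out, I would supplement the perturbation with a spatial barrier diverging at $r=0$, for instance replacing $\tilde W$ by $e^{-\lambda t}W + \varepsilon(1+\chi(r))$ with $\chi(r)=-\log r$, so that $\tilde W\to+\infty$ as $r\searrow 0$ and any first contact must occur at $r_\ast>0$. The delicate technical step is that the drift and diffusion $\chi$ contributes to the differential inequality contain singular $1/r$ and $1/r^2$ terms, which must be dominated by the positive residual $\varepsilon(\lambda-c)$; this uses that near the coordinate singularity the hypothesis $L\leq C$ forces $v^+,v^-\to 1$, so $\cF$ is uniformly parabolic near the axis and $\lambda$ can be taken large enough to absorb the $\chi$-corrections. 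Letting $\varepsilon\searrow 0$ then concludes $W\geq 0$ throughout $\Xi$.
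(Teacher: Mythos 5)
Your interior argument is the same as the paper's. The paper also forms $z=e^{-\lambda t}(v^+-v^-)+\alpha$, considers a first interior contact point, and at that point decomposes $\cF[v^+]-\cF[v^-]$ so that the zeroth-order coefficient is exactly your $c$ (up to which of $v^\pm$ is assumed to satisfy \eqref{eq:yamm-v-conditions}); the three one-sided curvature bounds are then used exactly as you use them to bound $c$ from above, and $\lambda>C$ closes the contradiction. So up to this point, you and the paper agree. You have correctly put your finger on a genuine loose end in the stated lemma, namely that nothing is said about what happens on $\{0\}\times[0,\bt]$, while the paper simply asserts $z>0$ on the parabolic boundary and takes the first contact to lie in $(0,\br)$.

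However, the fix you propose does not work. First, the claim that \emph{``$L\le C$ forces $v^+,v^-\to 1$''} is false: the hypothesis is a one-sided bound $(1-v)/r^2\le C$, which only gives $v\ge 1-Cr^2$, and it is imposed on only \emph{one} of $v^\pm$; nothing in the hypotheses keeps the other one near $1$. Second, and more seriously, the logarithmic barrier is too weak. Running the first-contact computation with $\tilde W=e^{-\lambda t}W+\varepsilon\bigl(1+\chi(r)\bigr)$, $\chi=-\log r$, the contradiction would require
\[
(\lambda-c)\bigl(1-\log r\bigr)\;>\;v^+\chi''+b\,\chi'
=\frac{2v^+-(n-1)}{r^2}-K^+-K^-,
\]
and the right-hand side contains an honest $1/r^2$ singularity (for $n=2$ this is $\sim1/r^2$ when $v^+$ is near $1$; $K^+$ is also not bounded by the hypotheses). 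As $\varepsilon\searrow 0$ the radius below which $\tilde W$ is automatically positive shrinks to $0$, so the first contact can migrate arbitrarily close to $r=0$, and no choice of $\lambda$ makes $\lambda\log(1/r)$ dominate $1/r^2$. The paper gets away with ignoring this because in every application one of $v^\pm$ is a smooth metric, so $v(0,t)\equiv 1$, and the other is built from $\bry$ with $\bry(0)=1$, so $v^+-v^-\to 0$ as $r\to 0$ for each $t>0$ and the first contact is automatically interior. A clean way to close the lemma as a general statement is either to add $v^-(0,t)\le v^+(0,t)$ to the parabolic-boundary hypotheses, or to assume two-sided bounds $|L|\le C$ for both $v^+$ and $v^-$ (which pins both at $v(0,t)=1$); trying to get around it with a barrier in $r$ is not the right move here.
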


In this lemma we assume that $v^\pm$ are smooth sub- and super- solutions.
However, the proof works without modifications in the case where $v^\pm$
are piecewise smooth, where the graph of $v^-$ only has convex corners, and
the graph of $v^+$ only has concave corners.  In our maximum principle
arguments, we shall only evaluate $v^\pm$ at ``points of first contact with
a given smooth solution'' which are necessarily smooth points of $v^\pm$.
Thus the hypothesis \eqref{eq:yamm-v-conditions} is to be satisfied at all
smooth points of $v^+$ or $v^-$; and, in particular, we do not intend to
interpret the second derivative $v_{rr}$ in \eqref{eq:yamm-v-conditions} in
the sense of distributions.
\begin{proof}
  We prove the Lemma assuming that $v^+$ satisfies
  \eqref{eq:yamm-v-conditions}.

  For $\lambda>0$ to be chosen later and any $\alpha>0$, define
  \begin{equation}
    z=e^{-\lambda t}(v^+-v^-)+\alpha.
  \end{equation}
  Then $z>0$ on the parabolic boundary of $\Xi$. We shall prove that
  $z>0$ in $\Xi$.  Because this implies that $v^+-v^->-\alpha
  e^{\lambda \bt }$ in $\Xi$, the lemma follows by letting $\alpha
  \searrow0$.

  Suppose there exists a first time $t\in(0,\bt ]$ and a point $r\in
  (0,\br)$ such that $z(r,t)=0$. Then $z_ t(r,t)\leq0$, and
  \begin{align*}
    v^+(r,t) =v^-(r,t)-\alpha e^{\lambda t},\quad v^+_ r(r,t) =v^-_
    r(r,t),\quad v^+_{rr}(r,t) \geq v^-_{rr}(r,t).
  \end{align*}
  Hence at $(r,t)$, one has $0\geq e^{\lambda t}z_ t$, where
  \begin{align*}
    e^{\lambda t}z_ t
    & =v^+_t - v^-_ t-\lambda(v^+-v^-)\\
    & =v^- [v^+_{rr}-v^-_{rr}] +(v^--v^+)\left\{
    \lambda-v^+_{rr}+\frac{v^+_ r} r-2(n-1)\frac{1-v^+-v^-}{r^2}
    \right\}\\
    & \geq (v^--v^+)\left\{ \lambda-v^+_{rr}+\frac{v^+_ r}
    r-2(n-1)\frac{1-v^+-v^-}{r^2} \right\}.
  \end{align*}
  Thus using the uniform bounds $v^+_{rr}\leq C$, $-v^+_r/r\leq C$, and
  $(1-v^+)/r^2\leq C$ on $\Xi$, together with $v^-\geq0$, one obtains
  \[
  0\geq e^{\lambda t}z_ t >\alpha e^{\lambda t}( \lambda-C) .
  \]
  This is a contradiction for any $\lambda>C$. The result follows.

  To prove the Lemma in the case that the subsolution $v^-$ satisfies
  \eqref{eq:yamm-v-conditions} one uses the fact that at a first zero $(r,
  t)$ of $z$ one has
  \[
  e^{\lambda t}z_ t = v^+ [ v^+_{rr} - v^-_{rr} ] + (v^--v^+)\left\{
  \lambda-v^-_{rr}+\frac{v^-_ r} r-2(n-1)\frac{1-v^+-v^-}{r^2} \right\} .
  \]
\end{proof}

Note that a different formulation of the lemma above is: if $v^+$ is a
supersolution of $v_t=\cF[v]$, then $v^+ + \alpha e^{\lambda t}$ is a
strict supersolution.

\begin{theorem} \label{pinch} Let $v$ denote any solution of the Cauchy
  problem
  \[
  v_ t =\cF [v],\qquad v(r,0) =\vini (r),
  \]
  that is smooth for a short time $0<t<t_1$. Let $v^{\pm}_{\varepsilon,\delta}$
  denote the sub- and super- solutions, depending on $\varepsilon, \delta>0$,
  that are constructed in Section~\ref{sec:Informal}.

  For all small enough $\varepsilon, \delta>0$, there exist
  $\br_{\varepsilon,\delta}, \bt_{\varepsilon,\delta}>0$ such that
  \begin{equation} \label{eq:any-v-between-vpm} v^-_{\varepsilon,\delta}(r,
    t) \leq v(r, t) \leq v^+_{\varepsilon,\delta}(r, t)
  \end{equation}
  for all $(r, t) \in \Xi = [0,\br] \times [0, \bt]$.

\end{theorem}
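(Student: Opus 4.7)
The plan is to apply the comparison principle of Lemma~\ref{yamm} in both directions: once with the pair $(v^-_{\varepsilon,\delta}, v)$ for the lower bound, and once with $(v, v^+_{\varepsilon,\delta})$ for the upper bound. In each application, the ``smooth side'' satisfying the hypothesis \eqref{eq:yamm-v-conditions} will be the explicit barrier $v^\pm_{\varepsilon,\delta}$ constructed in Section~\ref{sec:Informal}. Because those barriers are glued piecewise from the closed-form outer, parabolic, and inner profiles previewed in Theorem~\ref{thm:main}, uniform bounds on $v^\pm_{rr}$, on $K = -v^\pm_r/(2r)$, and on $L = (1-v^\pm)/r^2$ can be read off the explicit formulas on any compact rectangle $\Xi = [0,\br]\times[0,\bt]$. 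The permitted piecewise smoothness (convex corners only for $v^-$, concave corners only for $v^+$) is exactly the generality that the remark after Lemma~\ref{yamm} allows, so a first contact point of $v$ with either barrier is automatically a smooth point of the barrier.

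Next I would verify the three orderings on the parabolic boundary of $\Xi$. The initial inequality $v^-_{\varepsilon,\delta}(r,0)\leq\vini(r)\leq v^+_{\varepsilon,\delta}(r,0)$ on $[0,\br]$ is built into the design of the barriers, which straddle the cusp profile \eqref{eq:vcusp}. On the axis $r=0$, Theorem~\ref{CompactResult} forces $v(0,t)=1$, so it suffices that the $(\varepsilon,\delta)$-perturbation built into the inner layer strictly places $v^-_{\varepsilon,\delta}(0,t)<1<v^+_{\varepsilon,\delta}(0,t)$. At the outer edge $r=\br$, chosen well inside the outer region $c_1\sqrt{t}<r<c_2$, the barriers strictly straddle $\vini(\br)$ at $t=0$ by construction; since $v$ is smooth on $[\br/2,\br]\times[0,\bt]$ (the initial metric is regular there) and continuous down to $t=0$, this strict ordering persists for all $0\leq t\leq\bt$ once $\bt=\bt_{\varepsilon,\delta}$ is chosen small enough.

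With the three parabolic-boundary inequalities and the bounds \eqref{eq:yamm-v-conditions} on the smooth side in place, Lemma~\ref{yamm} immediately delivers $v^-_{\varepsilon,\delta}\leq v\leq v^+_{\varepsilon,\delta}$ throughout $\Xi$, which is \eqref{eq:any-v-between-vpm}.

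The main obstacle is the coordination of the four parameters $\varepsilon,\delta,\br,\bt$. One needs $\bt=\bt_{\varepsilon,\delta}$ short enough that $v$ stays trapped between the barriers at the outer edge $r=\br_{\varepsilon,\delta}$, yet $\varepsilon,\delta$ small enough that the barriers remain honest sub/super-solutions of $v_t=\cF[v]$ with the required strict orderings at $r=0$ and $t=0$. That these choices can be made simultaneously, uniformly in the smooth solution $v$ (beyond its life span $t_1$), is the delicate bookkeeping step, and will rely on the quantitative features of the constructions in Section~\ref{sec:Informal}; once this is arranged, the comparison principle closes the argument.
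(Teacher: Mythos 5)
Your outline is right about which comparison principle to invoke and about the parabolic-boundary orderings, but it contains a concrete error that the paper itself goes out of its way to flag. You assert that ``uniform bounds on $v^\pm_{rr}$, on $K=-v^\pm_r/(2r)$, and on $L=(1-v^\pm)/r^2$ can be read off the explicit formulas on any compact rectangle $\Xi=[0,\br]\times[0,\bt]$.'' That is false: at $t=0$ the barriers match the singular initial data, so $v^\pm_{\varepsilon,\delta}(r,0)=(1\pm\delta)v_0(r)\to0$ as $r\to0$, and consequently $L=(1-v^\pm)/r^2\sim r^{-2}\to\infty$ along the bottom edge of $\Xi$. Hypothesis \eqref{eq:yamm-v-conditions} therefore fails on $\Xi$ for \emph{both} barriers, and likewise for the smooth solution $v$, whose initial curvature is unbounded near $r=0$. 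The paper states this explicitly (``Lemma~\ref{yamm} would now immediately provide us with the desired conclusion, but unfortunately neither $v$ nor $v^{\pm}_{\varepsilon,\delta}$ meet the requirements needed to apply that result'') and does not proceed as you propose.

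The missing idea is to compare \emph{time-translates}. For the lower bound one sets $\tv(r,t)=v(r,t+\kappa)$ for small $\kappa>0$; since $\tv$ is a smooth complete solution starting at a strictly positive time, it does satisfy \eqref{eq:yamm-v-conditions} on $[0,\br]\times[0,\bt-\kappa]$, and Lemma~\ref{lem:Cristina} guarantees the parabolic-boundary orderings survive the shift. Lemma~\ref{yamm} then gives $\tv\geq v^-_{\varepsilon,\delta}$ on $\Xi_\kappa$, and one lets $\kappa\searrow0$. The upper bound is symmetric, comparing $v$ against $v^*(r,t)=v^+_{\varepsilon,\delta}(r,t+\kappa)$, which is piecewise smooth and satisfies \eqref{eq:yamm-v-conditions} because the supersolution at positive times is bounded away from zero near $r=0$. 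Without this translation device, your appeal to Lemma~\ref{yamm} has no legal footing at the initial time slice, and that is precisely where the difficulty of the theorem lies.
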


\begin{proof}
  Let $\varepsilon, \delta>0$ be given.

  Since $v$ assumes our initial data, we have $v(r, 0) =
  (1+o(1))v_0(r)$ ($r\searrow0$). Since the sub- and super- solutions
  are initially given by $v^\pm_{\varepsilon,\delta}(r, 0) = (1\pm\delta)
  v_0(r)$, we find that there is some $\br>0$ such that
  $v^-_{\varepsilon,\delta}(r, 0) < v(r, 0) <v^+_{\varepsilon,\delta}(r,
  0)$ for all $r\in (0, \br]$.

  The solution $v$ and the sub- and super- solutions
  $v^\pm_{\varepsilon,\delta}$ are smooth for $r>0$, so there is some
  $\bt>0$ for which $v^-_{\varepsilon,\delta}(\br, t) < v(\br, t) <
  v^+_{\varepsilon,\delta}(\br, t)$ holds for $0\le t \le \bt$.  After
  shrinking $\bt$ if needed, we may also assume that $(1-\delta)v_0(\br) <
  v(r, t) < (1+\delta)v_0(\br)$ holds for $0\le t\le \bt$.

  Lemma \ref{yamm} would now immediately provide us with the desired
  conclusion, but unfortunately neither $v$ nor
  $v^{\pm}_{\varepsilon,\delta}$ meet the requirements needed to apply that
  result.  We overcome this problem by comparing time translates of $v$ and
  $v^\pm_{\varepsilon,\delta}$.

  First we show that $v \geq v^-_{\varepsilon, \delta}$.  Translate the given
  solution in time by a small amount $\kappa>0$, i.e.~consider the
  function
  \[
  \tv(r, t) = v(r, t+\kappa)
  \]
  on the space-time domain $\Xi_\kappa = (0, \br) \times (0, \bt-\kappa)$.
  By Lemma~\ref{lem:Cristina}, we know that $v(r, t)\geq v^-_{\varepsilon,
  \delta}(\br, t) \geq (1-\delta)v_0(r)$ for $0<t<\bt$.  Since the
  time-translate $\tv$ is smooth, it does satisfy
  \eqref{eq:yamm-v-conditions}, and we may conclude that $\tv(r, t) \geq
  v^-(r, t)$ on $\Xi_\kappa$.  Letting $\kappa\searrow0$, we end up with
  $v(r, t) \geq v^-(r, t)$ on $\Xi$.

  Next, we argue that $v \leq v^+_{\varepsilon,\delta}$ on $\Xi$.  To this
  end we introduce
  \[
  v^*(r,t)=v^+_{\varepsilon,\delta}(r,\kappa+t).
  \]
  Since $v^+_{\varepsilon,\delta}$ is a supersolution that starts out with
  $v^+_{\varepsilon,\delta}(r,0) = (1+\delta)v_0(r)$,
  Lemma~\ref{lem:Cristina} tells us that $v^+_{\varepsilon,\delta}(r, t)
  \geq (1+\delta)v_0(r) > v(r, 0)$ for all $(r,t) \in \Xi$.  We already
  have $v^+_{\varepsilon,\delta}(\br, t) > (1+\delta)v_0(\br) > v(\br, t)$
  for $0<t\le \bt$. So, since $v^*$ is piecewise smooth and satisfies
  \eqref{eq:yamm-v-conditions}, we obtain $v(r, t) \leq v^*(r, t)$ on
  $\Xi_\kappa$. Letting $\kappa\searrow0$ then completes the proof by
  showing that $v\leq v^+_{\varepsilon,\delta}$ on $\Xi$.
\end{proof}

\section{Formal matched asymptotics}
\label{sec:Formal}

As a heuristic guide to what follows, we now construct an approximate
solution of (\ref{eq:v-t}) that emerges from initial data satisfying
$v(r,0) = \vini(r) =[1+o(1)]v_0(r)$ as $r\searrow0$, where
by~(\ref{eq:vcusp}), $v_0$ is given by (\ref{eq:define-v-tilde}).  By
Theorem~\ref{CompactResult}, we may restrict our attention to smooth
solutions satisfying the (incompatible) boundary condition $v(0,t)=1$.

To describe the asymptotic behavior of the formal solution for small $t>0$,
we split the $(r,t)$ plane into three regions, which we label
\textit{inner}, \textit{parabolic}, and \textit{outer}, as in the
introduction.

We shall describe the solution in these separate but overlapping regions,
working our way from the outer to the inner region.

\subsection{The outer region $(r\sim 1)$}\label{sec:formal-outer}

Away from $r=0$, we expect the solution to be smooth.  So a good
approximation of the solution at small $t>0$ should be
\[
v(r,t)\approx v_0(r)+tv_1(r),
\]
where $v_1 \doteqdot\cF [ v_0] $.  One computes that
\begin{align*}
  \cF[v_0] &=\frac{v_0}{r^2} \left\{ 2(n-1)(1-v_0) +\frac1{-\log r} \left(
  (n-1)-2v_0 +\frac{3 v_0}{-2\log r}\right)
  \right\} \\
  & =\frac{v_0}{r^2} \left\{ 2(n-1) + \cO \left( \frac 1{-\log r}\right)
  \right\}.
\end{align*}
So we set
\begin{equation}
  \label{eq:v_out-approximate}
  v_\OUT (r,t) \doteqdot
  {v}_0(r)\left[  1+2(n-1)\frac{t}{r^2} \right]
  =\frac{n-1}{-4\log r} \left[  1+2(n-1)\frac{t}{r^2}\right].
\end{equation}
This suggests new space and time variables
\begin{equation}
  \label{eq:Define-rho-tau}\rho=\frac{r}{\sqrt{t}},\qquad\tau= \log t.
\end{equation}
With respect to $\rho$, which is bounded away from zero in the outer
region, the outer approximation may be written in the form
\begin{equation}
  \label{eq:v_out-wrt-rho}
  v_\OUT (r, t)
  =\frac{n-1}{-2\tau} \left[  1+\frac{2(n-1)}{\rho^2}\right]
  +\cO \bigl(\tau^{-2}\bigr)
\end{equation}
as $t\searrow0$, or, equivalently, as $\tau\searrow-\infty$.

\subsection{The parabolic (intermediate) region $(r\sim \sqrt t)$}

With $\rho$ and $\tau$ given by (\ref{eq:Define-rho-tau}), we define $W$ by
\begin{equation}
  v(r,t)=\frac{W(\rho,-\log t)}{-\log t}=\frac{W(\rho,\tau)}{-\tau}.
  \label{eq:Define-W}
\end{equation}
Then it is straightforward to compute that $W$ satisfies
\begin{equation}
  W_{\tau}+\frac1{-\tau}\bigl\{W-\cQ_\PAR
  [W]\bigr\}=\cL_\PAR [W], \label{eq:W-PDE}
\end{equation}
where $\cL_\PAR $ is the first-order linear operator
\begin{equation}
  \cL_\PAR [W]\doteqdot\left(  \frac{n-1}{\rho}+\frac{\rho
  }2\right)  W_{\rho}+\frac{2(n-1)}{\rho^2}W, \label{eq:L_par}
\end{equation}
and $\cQ_\PAR $ is the quadratic form given by
\begin{equation}
  \cQ_\PAR [W]\doteqdot WW_{\rho\rho}-\frac12(W_{\rho
  })^2-\frac1{\rho}WW_{\rho}-\frac{2(n-1)}{\rho^2}W^2.  \label{eq:N_par}
\end{equation}
As $t\searrow0$ one has $\tau=\log t\to-\infty$.  So if the limit
$\lim_{t\searrow0}W(\rho,\tau)$ exists, equation (\ref{eq:W-PDE}) leads one
to expect it to be a function $W_0(\rho)$ which satisfies $\cL_\PAR
[W_0]=0$.  To get a better approximate solution, we can add correction
terms of the form $W_{i}(\rho )/(-\tau)^{i}$ and substitute in
\eqref{eq:W-PDE}.  In this way one finds a formal asymptotic expansion of
the form
\begin{equation}
  W (\rho,\tau)=W_0(\rho)+\frac{W_1(\rho)}{(-\tau
  )}+\frac{W_2(\rho)}{(-\tau)^2}+\cdots, \label{eq:W-expansion}
\end{equation}
in which the $W_{j}$ can be computed inductively from
\begin{equation}
  \cL_\PAR [W_0]=0,\quad\cL_{{\PAR
  }}[W_{j+1}]=(j+1)W_{j}-\sum_{i=0}^{j}\cQ
  _\PAR [W_{i},W_{j-i}].  \label{eq:W_1-ODE}
\end{equation}
Here $\cQ_\PAR [\phi,\psi]$ is the bilinear form corresponding to the
quadratic form $\cQ_\PAR $ defined above.

We will not use this expansion beyond the lowest order term, but it did
prompt us to look for the sub- and super- solutions which we find in
Section~\ref{sec:ParabolicRegion}.

Taking only the lowest order term, our approximate solution in the
parabolic region is
\begin{equation}
  v_\PAR (r,t)=\frac{W_0(\rho)}{-\tau},
\end{equation}
where $W_0$ is a solution of $\cL_\PAR [W_0]=0$.  The general solution of
$\cL_\PAR [W_0]=0$ is $W_0(\rho)=c_0[1+2(n-1)/\rho^2]$, which gives us
\[
v_\PAR (r, t)=\frac{c_0}{-\tau}\left[ 1+\frac{2(n-1)}{\rho^2}\right] .
\]
Matching with (\ref{eq:v_out-wrt-rho}) for fixed $\rho$ and $t\searrow0$
tells us that the constant $c_0$ should be $c_0=(n-1)/2$.  Thus we get
\begin{equation}
  W_0(\rho)=\frac{n-1}2\left[  1+\frac{2(n-1)}{\rho^2}\right]
  \label{eq:W_0}
\end{equation}
and
\[
v_\PAR (r, t)=\frac{n-1}{-2\tau}\left[ 1+\frac {2(n-1)}{\rho^2}\right] .
\]

For small $\rho$, more precisely for $\rho=\cO (1/\sqrt{-\tau})$, one
defines a new space variable $\sigma=\rho\sqrt{-\tau} $ in order to write
the approximation
\begin{equation} \label{eq:small-rho-expansion} v_\PAR (r, t)
  \approx\frac{(n-1)^2}{\sigma^2} + \frac{n-1}{-2\tau}.
\end{equation}

At first glance, it may be surprising that the approximate solution in the
intermediate region is found by solving the first-order equation
(\ref{eq:L_par}) rather than by finding a stationary solution of a
parabolic equation.  This is caused by the fact that the parabolic
\textsc{pde} $v_t = \cF[v]$ is degenerate when $v=0$.

If the solution $v$ were approximately self-similar with parabolic scaling,
then one would have $v(\sqrt{t}\rho,t)\approx U(\rho)$ for some
self-similarly-expanding solution $U $.  However, equation
(\ref{eq:v_out-wrt-rho}) shows that this is incompatible with the behavior
of $v$ near the ``outer boundary'' of the intermediate region.

Nonetheless, we remark that self-similarly-expanding Ricci solitons
$U=\sqrt{V}$ do exist.  These were first discovered by Bryant, in
unpublished work.  Each is a solution of the \textsc{ode}
\begin{equation}
  U^2U_{\rho\rho}+\left\{  \frac{n-1-U^2}{\rho}+\frac{\rho}2\right\}
  U_{\rho}+\frac{n-1}{\rho^2}\left(  1-U^2\right)  U=0
  \label{eq:expander-ode}
\end{equation}
derived from~(\ref{eq:u-pde}), but each emerges from initial data
corresponding to a singular conical metric,
\begin{equation}
  g=\frac{(dr)^2}{U_{\infty}^2}+r^2\gcan , \label{eq:cone}
\end{equation}
where $U_{\infty}>0$.  We prove this assertion in
Appendix~\ref{app:AntiParabolic}.

We shall see below that solutions to (\ref{eq:expander-ode}) with $U(0)=1$
and $U(\infty)=0$ do exist when the term $\frac12\rho U'$ in
(\ref{eq:expander-ode}) is absent.  These solutions correspond to the Bryant
steady soliton.

Existence of solutions emerging from (\ref{eq:cone}) also follows from
Simon's work \cite{Simon02}, at least when $U_\infty$ is close to 1; but
Simon does not require the hypothesis of $\textrm{SO}(n+1, \R)$ symmetry.

\subsection{The inner (slowly changing) region ($r\sim \sqrt{t/(-\log
t)}$) \label{sec:formal-inner}}

The formal solution in the parabolic scale $r=\cO (\sqrt{T-t})$ found above
becomes singular as $r\to0$; and, in particular, it does not satisfy the
boundary condition at $r=0$.  Thus we look for a ``boundary layer''\ at a
smaller scale which will reconcile the incompatible initial $v(r,0)=0$ and
boundary $v(0,t)=1$ $(t>0)$ conditions.  Our derivation of the formal
solution in the parabolic region suggests that the smaller length scale
should be $\sqrt{t/(-\log t)}$.  So we let
\begin{equation}
  \theta=\sqrt{\frac{t}{-\log t}},\qquad\sigma=\frac{r}{\theta}
  \label{eq:theta-sigma-defined}
\end{equation}
and define
\begin{equation}
  v(r,t)=V\bigl(\sigma,\theta\bigr).  \label{eq:V-inner-defined}
\end{equation}
Then $V$ satisfies the \textsc{pde}
\begin{equation}
  \theta\theta_{t}\bigl\{\theta V_{\theta}-\sigma V_{\sigma}\bigr\}=\cF
  _\IN[V], \label{eq:ExactEquationInnerRegion}
\end{equation}
where $\cF_{{\mathrm{in}}}$ is obtained by replacing $r$-derivatives in
$\cF $ with $\sigma$-derivatives, namely
\[
\cF_{{\mathrm{in}}}[V] \doteqdot VV_{\sigma\sigma}-\tfrac12(V_{\sigma}
)^2+\frac{n-1-V}{\sigma}V_{\sigma}+\frac{2(n-1)}{\sigma^2}(V-V^2).
\]
We will abuse notation and simply write $\cF$ for $\cF_\IN$.

We begin with the observation that $\theta\theta_{t}=\cO \bigl((-\log
t)^{-1}\bigr)=o(1)$ for small $t$, so that the crudest approximation of
(\ref{eq:ExactEquationInnerRegion}) is simply the equation $\cF [V ]=0$.
This \textsc{ode} admits a unique one-parameter family of complete
solutions satisfying $V (0)=0$ and $V (\infty)=0$.  These solutions are
given by
\[
V_0(\sigma)=\bry (k\sigma)\qquad(0<k<\infty),
\]
where $\bry $ is the \emph{Bryant steady soliton,} whose asymptotic
behavior is $\bry (\sigma)=\sigma^{-2}+o(\sigma^{-2})$ for $\sigma \to
\infty$.  These assertions are proved in Appendix~\ref{sec:Bryant}.

Equation (\ref{eq:ExactEquationInnerRegion}) suggests that this crude
approximation is off by a term of order $\theta\theta_{t}$, which prompts
us to look for approximate solutions of the form
\begin{equation}
  \label{eq:InnerApproximation}
  V (\sigma,t) = V_0(\sigma) + \theta\theta_{t}V_1(\sigma),
\end{equation}
Here $V_0(\sigma) = \bry (k\sigma)$ contains an unspecified constant $k$
whose value we will determine later by matching with the approximate
solution from the parabolic region.

To find an equation for $V_1$, we observe that the \textsc{lhs} and
\textsc{rhs} of (\ref{eq:ExactEquationInnerRegion}) applied to $V $ yield
\begin{equation}
  \theta\theta_{t}(\theta V_{\theta}-\sigma V_{\sigma}
  )=-\theta\theta_{t}\sigma V_0'(\sigma)+(\theta\theta_{t}
  )^2\left[  V_1(\sigma)-\sigma V_1'(\sigma
  )\right]  +\theta^3\theta_{tt}V_1(\sigma),
  \label{eq:inner-formal-lhs}
\end{equation}
and
\begin{equation}
  \cF [V_0+\theta\theta_{t}V_1]=\cF [V_0]+d\cF_{V_0}[\theta\theta_{t}V_1
  ]+o(\theta\theta_{t}V_1)=\theta\theta_{t}d\cF_{V
  _0}[V_1]+o(\theta\theta_{t}V_1),
  \label{eq:inner-formal-rhs}
\end{equation}
respectively.  Here $d\cF $ is the first variation of the nonlinear
operator $\cF $, defined by
\[
d\cF_{V}[W]=\left.  \frac{d\cF [V+\epsilon W]}{d\epsilon
}\right|_{\epsilon=0}.
\]
It is given by the ordinary differential operator
\begin{multline}
  d\cF_{V_0}= V_0(\sigma)\frac{d^2}{d\sigma^2} +\left[
  \frac{n-1-V_0(\sigma)}{\sigma}-V_0'(\sigma) \right]
  \frac{d}{d\sigma}\\
  +\left[ V_0''(\sigma) -\frac{V_0'(\sigma)}{\sigma}
  +\frac{2(n-1)}{\sigma^2} (1-2V_0(\sigma)) \right].
\end{multline}
We note that
\[
|\theta^3\theta_{tt}|+|\theta\theta_{t}|^2 =
o(\theta\theta_{t})\quad\text{as}\quad t\searrow0.
\]
So by keeping only the most significant terms in the \textsc{lhs} and
\textsc{rhs}, we find the following equation for $V_1$,
\begin{equation}
  (d\cF_{V_0})[V_1]=-\sigma V_0'(\sigma).  \label{eq:V1-def-equation}
\end{equation}
We will first solve this equation in the case $k=1$ (when $V _0=\bry $).
The general case then easily follows by rescaling.

\begin{lemma}
  \label{lem:bryant-next-term}The ordinary differential equation
  \[
  d\cF_{\bry }[\cry ]=-\sigma\bry '(\sigma)
  \]
  has a strictly positive solution $\cry :(0,\infty)\to \R_{+}$ that
  satisfies
  \begin{equation} \label{eq:cryant-asymptotics} \cry (\sigma)=\left\{
    \begin{array} [c]{cc}
      M\sigma^2+o(\sigma^2) & (\sigma\searrow0),\\
      n-1+o(1) & (\sigma\nearrow\infty),
    \end{array}
    \right.
  \end{equation}
  for some constant $M>0$.

  All other solutions of $d\cF_{\bry }[\cry ]=-\sigma \bry '(\sigma)$ that
  are bounded at $\sigma=0$ are given by ${\tilde{\cry}}(\sigma)=\cry
  (\sigma)-\lambda\sigma\cry '(\sigma)$ for an arbitrary $\lambda\in\R $.

  If $V_0(\sigma)=\bry (k\sigma)$ for any $k>0$, then
  \[
  V_1(\sigma)=\frac{\cry (k\sigma)}{k^2}
  \]
  is a solution of (\ref{eq:V1-def-equation}).
\end{lemma}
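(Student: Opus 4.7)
The equation is a linear second-order inhomogeneous ODE on $(0,\infty)$, and my plan is to exploit the scale invariance of the Bryant ODE to produce one explicit homogeneous solution, reduce the inhomogeneous problem to a first-order ODE solvable by an integrating factor, and extract the asymptotic behavior at both endpoints from the resulting integral formula. The key observation is that since $\cF[\bry(k\sigma)]=0$ for every $k>0$, differentiation in $k$ at $k=1$ yields $d\cF_{\bry}[\sigma\bry'(\sigma)]=0$; hence $h_1(\sigma)\doteqdot\sigma\bry'(\sigma)$ is a nonzero element of $\ker d\cF_{\bry}$. A Frobenius analysis at $\sigma=0$, using $\bry(0)=1$ and the smoothness of $\bry$ at the origin recalled in Appendix~C, gives indicial exponents $\{2,\,1-n\}$, and $h_1$ lies in the bounded $\alpha=2$ branch, while the other homogeneous solution $h_2\sim\sigma^{1-n}$ is singular at the origin.

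Reduction of order via $\cry=h_1\phi$ and $\psi=\phi'$ collapses the problem (using $d\cF_{\bry}[h_1]=0$) to the first-order linear ODE
\[
\bry(\sigma)\,h_1(\sigma)\,\psi'(\sigma) + \bigl(2\bry(\sigma)\,h_1'(\sigma)+\alpha(\sigma)\,h_1(\sigma)\bigr)\,\psi(\sigma) = -\sigma\bry'(\sigma),
\]
where $\alpha(\sigma)=(n-1-\bry)/\sigma-\bry'$ is the $W'$-coefficient of $d\cF_{\bry}$. This is solved explicitly by an integrating factor; integrating $\psi$ a second time recovers $\phi$, and fixing the integration constant so as to exclude the $h_2\sim\sigma^{1-n}$ branch selects a distinguished particular solution $\cry=h_1\phi$ bounded at the origin, uniquely determined up to the one-dimensional freedom of adding multiples of $h_1$.

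The asymptotics then follow from this explicit representation. Near $\sigma=0$, the forcing $-\sigma\bry'(\sigma)$ is of order $\sigma^2$, matching the indicial root $\alpha=2$; a direct expansion shows that the coefficient of the would-be resonant $\sigma^2\log\sigma$ term vanishes, yielding $\cry(\sigma)=M\sigma^2+o(\sigma^2)$ for an explicit constant $M>0$. Near $\sigma=\infty$, the ODE degenerates because $\bry(\sigma)\to 0$; the dominant balance $(n-1)\cry'/\sigma + 2(n-1)\cry/\sigma^2 \sim -\sigma\bry'(\sigma)$ forces $\cry$ to a finite limit, which the integral formula identifies as $n-1$. Positivity throughout $(0,\infty)$ follows from the sign structure of the integrand in the explicit formula, since both $h_1$ and $-\sigma\bry'(\sigma)$ have definite signs on the half-line; alternatively, one can run a maximum-principle argument using the strictly positive forcing and the positive boundary behavior at both endpoints just obtained.

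For the final assertions, since $\ker d\cF_{\bry}\cap\{\text{bounded at }0\}$ is one-dimensional, any two bounded solutions of the inhomogeneous equation differ by a multiple of $h_1$, yielding the claimed one-parameter family of bounded solutions; the reparameterization $\tilde\cry=\cry-\lambda\sigma\cry'$ is obtained by verifying directly that $\sigma\cry'$ spans this same subspace. The rescaling claim $V_1(\sigma)=\cry(k\sigma)/k^2$ for $V_0=\bry(k\sigma)$ is a direct consequence of the scale covariance $d\cF_{\bry(k\cdot)}[W(k\cdot)](\sigma)=k^2\,d\cF_{\bry}[W](k\sigma)$ of the linearization: after the change of variable $y=k\sigma$, the rescaled equation $d\cF_{V_0}[V_1]=-\sigma V_0'(\sigma)$ reduces to the already-verified $d\cF_{\bry}[\cry](y)=-y\bry'(y)$. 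The step I expect to be most delicate is the vanishing of the logarithmic correction at $\sigma=0$, since the forcing $-\sigma\bry'$ has the same $\sigma^2$ leading behavior as the bounded homogeneous solution $h_1$; this cancellation is what guarantees the clean $M\sigma^2$ asymptotic and must be tracked carefully through the reduction-of-order formula.
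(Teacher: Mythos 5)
Your overall strategy matches the paper's: exhibit the bounded homogeneous solution $\phi(\sigma)=-\sigma\bry'(\sigma)$ by differentiating $\cF[\bry(k\sigma)]=0$ in $k$, use reduction of order, and read off the asymptotics at $0$ and $\infty$; your scale-covariance argument for the last assertion ($V_1=k^{-2}\cry(k\cdot)$) is equivalent to the paper's direct substitution.

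There is, however, a conceptual error in your $\sigma\searrow 0$ analysis. You claim the forcing $-\sigma\bry'(\sigma)\sim-\bry''(0)\sigma^2$ ``matches the indicial root $\alpha=2$'' and therefore produces a would-be resonant $\sigma^2\log\sigma$ term whose cancellation must be verified. This mixes up degree conventions. Put the equation in Frobenius normal form by dividing by $\bry\approx1$ and multiplying by $\sigma^2$: the indicial polynomial is $P(\alpha)=(\alpha-2)(\alpha+n-1)$, while the right-hand side becomes $-\sigma^3\bry'(\sigma)=-\bry''(0)\sigma^4+O(\sigma^6)$, so the particular solution is sought at degree $4$, not $2$. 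Since $P(4)=2(n+3)\neq0$ there is no resonance and no logarithm at all; the paper's $\cry_p(\sigma)=(K+o(1))\sigma^4$ with $K=-\bry''(0)/\bigl(2(n+3)\bigr)>0$ comes from exactly this balance. The leading $M\sigma^2$ behavior of the chosen $\cry$ is not obtained by a delicate cancellation but simply by adding to $\cry_p$ a nonzero multiple of the homogeneous solution $\phi\sim-\bry''(0)\sigma^2$, which is also what the paper does (and needs) to force positivity: $\cry_p$ is positive near $0$ and near $\infty$, and $\phi>0$ on all of $(0,\infty)$, so $\cry_p+\lambda\phi$ with $\lambda$ large suffices. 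This is far more concrete than the ``sign structure of the integrand'' or maximum-principle routes you sketch; and the step you flag at the end as ``most delicate'' is in fact a non-issue.

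One smaller remark. Your proposed verification that ``$\sigma\cry'$ spans this same subspace'' cannot be right as stated: the bounded kernel of $d\cF_\bry$ is one-dimensional and spanned by $\sigma\bry'(\sigma)$, and $\sigma\cry'$ would lie in it only if $\cry$ were affine in $\bry$, which the asymptotics at $\sigma=\infty$ rule out. The expression $\cry-\lambda\sigma\cry'$ in the Lemma statement is most plausibly a misprint for $\cry-\lambda\sigma\bry'$, which is precisely what the paper's proof (taking $a=-\lambda$ in $\cry=\cry_p+a\phi$) produces; you should cross-check against that rather than try to show $\sigma\cry'$ is a homogeneous solution.
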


\begin{proof}
  We know that $\cF [\bry (k\sigma)]=0$ for every $k>0$; differentiating
  this equation with respect to $k$ and setting $k=1$, we find that the
  homogeneous equation $d\cF_{\bry }[\phi]=0$ has a solution
  \[
  \phi(\sigma)\doteqdot-\sigma\bry '(\sigma).
  \]
  Given that $\phi$ satisfies $d\cF_{\bry }[\phi]=0$, one can use the
  method of reduction of order to find a second (linearly independent)
  solution $\hat{\phi}$.  One finds the following asymptotic behavior of
  $\hat{\phi}$ for small and large $\sigma$:
  \[
  \hat{\phi}(\sigma)=
  \begin{cases}
    C\sigma^{-(n-1)}+o(\sigma^{-(n-1)}) & (\sigma\searrow0),\\
    \exp\left\{ -\sigma^2/2(n-1)+o(\sigma^2)\right\} & (\sigma\nearrow
    \infty).
  \end{cases}
  \]

  If $\cry_{p}$ is any particular solution of $d\cF_\bry[\cry_{p}] =
  -\sigma\bry '(\sigma)$, then the general solution to $d\cF_\bry[\cry] =
  -\sigma\bry'(\sigma)$ is
  \begin{equation}
    \cry_{g}(\sigma)=a\phi(\sigma)+b\hat{\phi}(\sigma)+\cry_{p}
    (\sigma).  \label{eq:V1-general-soln}
  \end{equation}
  To obtain a particular solution which is bounded at $\sigma=0$, we note
  that for small $\sigma$, the equation $d\cF_{\bry }[\cry ]=-\sigma\bry '$
  is to leading order
  \[
  \cry '' +\frac{n-2}{\sigma}\cry ' -\frac {2(n-1)}{\sigma^2}\cry \approx
  -\bry''(0)\sigma^2,
  \]
  where $\bry ''(0)<0$ by Lemma~\ref{lem:BabyBryant} in
  Appendix~\ref{sec:Bryant}.  From this, one finds that a solution $\cry
  _{p}$ exists for which
  \begin{equation}
    \cry_{p}(\sigma)=(K+o(1))\sigma^4\text{ as }\sigma\searrow0,\text{
    where }K=-\frac{\bry ''(0)}{2(n+3)}>0.
    \label{eq:C-hat-asymptotics-at-zero}
  \end{equation}

  Since $\hat{\phi}$ is not bounded at $\sigma=0$, the only solutions given
  in (\ref{eq:V1-general-soln}) which are bounded at $\sigma=0$ are those
  for which $b=0$.

  Near $\sigma=\infty$, the equation $d\cF_{\bry }[\cry ]=-\sigma\bry '$
  is, to leading order,
  \[
  \frac1{\sigma^2}\cry ''+\frac{n-1}{\sigma} \cry
  '+\frac{2(n-1)}{\sigma^2}\cry \approx\frac2 {\sigma^2}.
  \]
  One then finds that there also is a solution $\cry_{\infty}$, which
  satisfies
  \[
  \cry_{\infty}(\sigma)=n-1+o(1)\qquad(\sigma\to\infty).
  \]
  But $\cry_{\infty}(\sigma)-\cry_{p}(\sigma)$, being the difference of two
  particular solutions, is a linear combination of $\phi$ and $\hat{\phi
  }$.  Since $\phi(\sigma)=o(1)$ and $\hat{\phi}(\sigma)=o(1)$ as $\sigma
  \to\infty$, we conclude that one also has
  \begin{equation}
    \cry_{p}(\sigma)=n-1+o(1) \label{eq:C-hat-asymptotics-at-infty}
  \end{equation}
  as $\sigma\to\infty$.  So we see that the general solution of $d\cF_{\bry
  }[\cry ]=-\sigma\bry '(\sigma)$ which is bounded as $\sigma\to0$ is given
  by $\cry (\sigma)=\cry_{p}(\sigma)+a\phi(\sigma)$.  Setting $a=-\lambda$
  leads to the general solution described in the statement of the lemma.

  The particular solution $\cry_{p}(\sigma)$ is positive for small and for
  large $\sigma$.  The solution $-\phi(\sigma)=-\sigma\bry ' (\sigma)$ to
  the homogeneous equation is positive for all $\sigma>0$.  Hence if one
  chooses $\lambda$ sufficiently large the resulting solution $\cry
  =\cry_{p}(\sigma)-\lambda\sigma\bry '(\sigma)$ will be strictly positive
  for all $\sigma>0$.  This is the positive solution of $d\cF_{\bry }[\cry
  ]=-\sigma\bry '(\sigma)$ which was promised in the lemma.

  Finally, if $d\cF_{\bry }[\cry ]=-\sigma\bry'(\sigma)$ then one verifies
  by direct substitution that $V_0 (\sigma)=\bry (k\sigma)$ and
  $V_1(\sigma)=k^{-2}\cry (k\sigma)$ satisfy \eqref{eq:V1-def-equation}.
\end{proof}

We return our attention to the approximate solution $V (\sigma,t)$ in
\eqref{eq:InnerApproximation}.  Combining $V_0(\sigma) = (1+o(1))
(k\sigma)^{-2}$ and $V_1(\sigma)=n-1+o(1)$ for large $\sigma$ with the
observation that
\[
\theta\theta_{t}=\frac{1-\log t}{2(\log t)^2}=\frac{\frac12+o(1)}{-\log
t},\qquad(t\searrow0),
\]
one sees that for large $\sigma$ and small $t$ our approximate inner
solution satisfies
\[
V (\sigma,t)=V_0(\sigma)+\theta\theta_{t}V_1 (\sigma) =[1+o(1)] \left\{
\frac{1}{k^2\sigma^2}+\frac1{-2(n-1)k^2\log t} \right\} .
\]
If we try to match this with the ``small $\rho$ expansion''\
(\ref{eq:small-rho-expansion}) for our approximate solution in the
parabolic region, then we see that we should choose
\begin{equation}
  k=\frac1{n-1}.  \label{eq:the-k-that-matches}
\end{equation}

\section{Construction of the barriers}
\label{sec:Informal}

\subsection{Outline of the construction}
In this section, we will construct lower and upper barriers for the
parabolic \textsc{pde}
\[
v_{t}=\cF [v]=\frac1{r^2}\left\{ vr^2v_{rr}-\frac12
(rv_{r})^2+(n-1-v)rv_{r}+2(n-1)(1-v)v\right\} .
\]
These barriers will apply to initial data satisfying
\[
\vini(r)=[1+o(1)]v_0(r)\quad\text{as}\quad r\searrow0,
\]
where $v_0$ is the asymptotic approximation defined in
(\ref{eq:define-v-tilde}), namely
\[
v_0(r)=\frac{n-1}{-4\log r}.
\]
The barriers will be valid on a sufficiently small space-time region
\[
0<r<r_*,\qquad0<t<t_*.
\]
Note that $r_*$ will not exceed the quantity $r_{\#}$ from assumption~(M7)
concerning the initial metric.  (See Section~\ref{sec:initialData}.)

Because we will not be able to write down barriers that are defined on this
whole domain, our construction proceeds in two steps.
Theorems~\ref{thm:outerlayer-subsuper}, \ref{thm:paraboliclayer-subsuper},
and \ref{InnerResult} constitute the first step.  In this step, in
accordance with the matched asymptotic description of the solution in
Section~\ref{sec:Formal}, we will produce three sets of barriers, each in
its own domain. (See Table~\ref{tab:subandsupers}.)
\begin{table}[tb]
  \begin{tabular}
    [c]{lcc}
    \hline
    \bf Outer
    &\vrule depth 12pt height 18pt width 0pt
    $\displaystyle(1\pm\delta)v_0(r)+(1\pm\varepsilon)t\cF [(1\pm\delta)v_0(r)]$
    & $\displaystyle\rho_* \sqrt{t}\leq r\leq r_*$
    \\[1ex]
    \bf Parabolic
    & $\displaystyle(1\pm\gamma_\pm)
    \frac{W_0(\rho)}{-\tau} \pm\frac{B^2}{\tau^2\rho^4}$
    &\vrule depth 12pt height 18pt width 0pt
    $\displaystyle\frac{\sigma_*}{\sqrt{-\tau}}\leq\rho\leq3\rho_*$
    \\[1ex]
    \bf Inner
    &\vrule depth12pt height18pt width0pt
    $\displaystyle\bry (k_{\pm}\sigma)
    +(1\mp\varepsilon)\theta\theta_{t}k_{\pm}^2\cry (k_{\pm}\sigma)$
    & $\displaystyle0<\sigma \leq3\sigma_*$\\
    \hline
  \end{tabular}
  \medskip

  \caption{The sub- and super- solutions with their domains.  Here,
  $\delta$ and $\varepsilon$ are sufficiently small; $\rho_* =
  A\varepsilon^{-1/2}$, $\sigma_* = B\varepsilon^{-1/2}$ for certain
  constants $A,B$ depending only on $n$; and $\gamma_\pm$, $k_\pm$
  are given by \eqref{eq:gk-minus-choice} and
  \eqref{eq:gk-plus-choice}.}
  \label{tab:subandsupers}
\end{table}
Note that the domains overlap.  In all three cases, time is restricted to
$0<t<t_*$.  The parameters $r_*<r_{\#}$, $\rho_*$, $\sigma_*$, and $t_*$
will be defined during the construction.  Although the construction admits
free parameters $\gamma$, $\delta$, $\varepsilon$, and $k$, all but
$\delta$ and $\varepsilon$ will be fixed in the second (``gluing'') step.

After constructing separate barriers, we must ``glue'' them together in
order to make one pair of sub-/super- solutions.  For example, to glue the
subsolutions in the parabolic and outer regions, we define
\[
v^{-}(r,t)=
\begin{cases}
  v_\OUT^{-}(r,t) & 3\rho_*\sqrt{t}\leq r\leq r_*\\
  \mathstrut & \mathstrut\\
  v_\PAR^{-}(r,t) & \sigma_*\sqrt{-t/\log t}\leq r\leq \rho_*\sqrt{t}
\end{cases}
\]
and
\[
v^{-}(r,t)=\max\left\{ v_\OUT^{-}(r,t),v_\PAR
^{-}(r,t)\right\}
\]
in the overlap between the outer and parabolic regions, when
$\rho_{\ast }\sqrt{t}\leq r\leq3\rho_*\sqrt{t}$.  To be sure that this
construction yields a true subsolution, we will verify the following
``gluing condition'':
\begin{equation}
  v_\PAR (\rho_*\sqrt{t},t)>v_\OUT (\rho_*
  \sqrt{t},t)\text{ and }v_\PAR (3\rho_*\sqrt{t}
  ,t)<v_\OUT (3\rho_*\sqrt{t},t).  \label{sub-solution-by-gluing}
\end{equation}

\begin{figure}[h]
  \centering
  \includegraphics{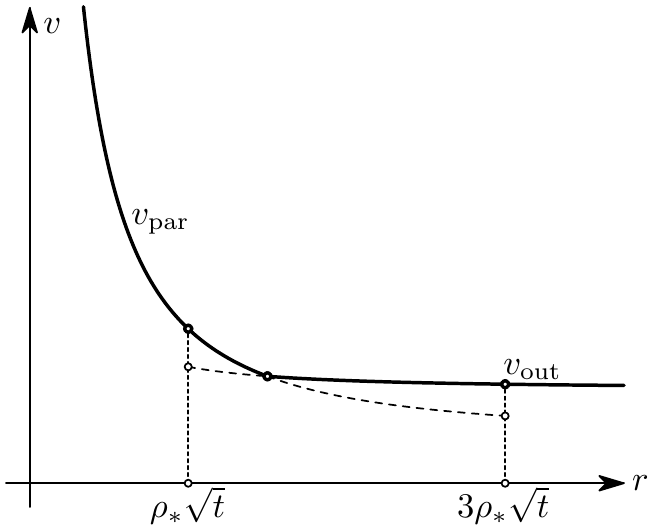}\caption{Gluing subsolutions}
\end{figure}

\noindent Lemmas~\ref{thm:can-glue} and \ref{thm:can-glue-too} constitute
the second step of the construction.  When this step is completed, we will
have chosen
\begin{equation}\label{eq:gk-minus-choice}
  \gamma_{-}=\delta+\frac{(1-\delta)\varepsilon^2}{\varepsilon+2A^2
  /(n-1)}\quad\text{and}\quad k_{-}=\frac1{n-1}\,\frac1{\sqrt{1-\gamma
  _{-}-\varepsilon/2(n-1)^2}}
\end{equation}
for subsolutions, and
\begin{equation}\label{eq:gk-plus-choice}
  \gamma_{+}=\delta+\frac{(1+\delta)\varepsilon^2}{\varepsilon+2A^2
  /(n-1)}\quad\text{and}\quad k_{+}=\frac1{n-1}\,\frac1{\sqrt{1+\gamma
  _{+}-\varepsilon/2(n-1)^2}}
\end{equation}
for supersolutions.  Note that for small $\delta,\varepsilon>0$, one has
$k_{-}>k_{+}$.  Because $\bry (\sigma)$ is decreasing, this implies in
particular that $V_\IN^{-}<V_\IN^{+}$ holds in the inner region.
\footnote{By taking $\varepsilon>0$ sufficiently small, depending on
$\delta>0$, one can make $k_{-}>(n-1)^{-1}>k_{+}$; but we will
not need this fact.}

\subsection{Barriers in the outer region}
\label{sec:OuterRegion}

In Section~\ref{sec:formal-outer}, we constructed an approximate solution of
the form $v(r,t) = v_0(r) + tv_1(r)$.  It turns out that a slight modification
of this approximate solution yields both sub- and super- solutions in the
outer region.

\begin{theorem}
  [Outer Region]\label{thm:outerlayer-subsuper} There exist positive
  constants $A=A(n)$, $r_*=r_*(n)$, and $t_*=t_*(n,\varepsilon)$ such that
  for all $|\delta|\leq\frac12$ and for all small $\varepsilon>0$, the
  functions
  \[
  v_\OUT^{\pm}(r,t) =(1\pm\delta)v_0(r) + (1\pm \varepsilon)t\cF
  [(1\pm\delta)v_0(r)]
  \]
  are sub- $(v_\OUT^{-})$ and super- $(v_\OUT^{+})$ solutions in the region
  \[
  \Omega_\OUT \doteqdot\left\{ (r,t):\rho_*\sqrt{t}\leq r\leq
  r_*,\quad0<t<t_* \right\},
  \]
  where $\rho_* = A/\sqrt{\varepsilon}$.
\end{theorem}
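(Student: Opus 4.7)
Because the nonlinear operator $\cF$ is at most quadratic in $v$ and its derivatives, the Taylor expansion of $\cF$ at any $w$ terminates: there is an exact identity
\[
\cF[w+h] = \cF[w] + d\cF_w[h] + \cQ[h],
\]
where $\cQ[h] = hh_{rr} - \tfrac12 h_r^2 - hh_r/r - 2(n-1)h^2/r^2$. I would open the proof with this identity applied to $w = (1\pm\delta)v_0$ and $h = (1\pm\varepsilon)t\,\cF[w]$, yielding
\[
\partial_t v^\pm_\OUT - \cF[v^\pm_\OUT] = \pm\varepsilon\,\cF[w] - (1\pm\varepsilon)\,t\,d\cF_w\bigl[\cF[w]\bigr] - (1\pm\varepsilon)^2\,t^2\,\cQ\bigl[\cF[w]\bigr].
\]
The supersolution ($+$) and subsolution ($-$) inequalities both reduce to showing that the leading term $\varepsilon\cF[w]$ dominates the two remainder terms.

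Next I would pin down the sign and size of $\cF[w]$. Since $w = (1\pm\delta)v_0(r)$ is precisely the function differentiated in Section~\ref{sec:formal-outer}, the same calculation gives
\[
\cF[w] = \frac{2(n-1)w}{r^2}\bigl\{1 + \cO\bigl((-\log r)^{-1}\bigr)\bigr\},
\]
so that $\cF[w] \geq c(n)\,r^{-2}(-\log r)^{-1} > 0$ on $0 < r \leq r_*$ once $r_*$ is chosen small enough depending only on $n$ and on the bound $|\delta|\leq \tfrac12$.

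The main technical step is bookkeeping on the error terms. Direct differentiation of $v_0(r) = (n-1)/(-4\log r)$ shows that $\cF[w]$ and its $r$--derivatives pick up one extra inverse power of $r$ per derivative, modulo logarithmic factors, so
\[
\bigl|d\cF_w[\cF[w]]\bigr| \lesssim \frac{1}{r^4(\log r)^2}, \qquad \bigl|\cQ[\cF[w]]\bigr| \lesssim \frac{1}{r^6(\log r)^2},
\]
with implicit constants depending only on $n$. Comparing to $\varepsilon\cF[w] \gtrsim \varepsilon/(r^2(-\log r))$, the ratios are
\[
\frac{t\,|d\cF_w[\cF[w]]|}{\varepsilon\cF[w]} \lesssim \frac{t}{\varepsilon\,r^2\,(-\log r)}, \qquad \frac{t^2\,|\cQ[\cF[w]]|}{\varepsilon\cF[w]} \lesssim \frac{t^2}{\varepsilon\,r^4\,(-\log r)}.
\]
The constraint $r \geq \rho_*\sqrt{t} = A\sqrt{t/\varepsilon}$ gives $t/r^2 \leq \varepsilon/A^2$, which converts these ratios to quantities bounded by $C(n)/(A^2(-\log r_*))$. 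Choosing $A = A(n)$ and then $r_* = r_*(n)$ makes this ratio less than $\tfrac12$, which forces both remainder terms to be dominated by $\tfrac12\,\varepsilon\,\cF[w]$ and delivers the required sign. Finally, $t_*(n,\varepsilon)$ is determined by requiring $\rho_*\sqrt{t_*} \leq r_*$, i.e.\ $t_* = r_*^2\varepsilon/A^2$, so that the region $\Omega_\OUT$ is non-empty.

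\textbf{Main obstacle.} There is no conceptual difficulty; the whole argument is a careful ansatz-verification. The only delicate point is extracting the scaling law $\rho_* \sim \varepsilon^{-1/2}$ from the error estimates: one must observe that the worst term $t\,w\,\cF[w]_{rr}$ scales like $t/r^4$ times a logarithmic correction, so only a bound of the form $t/r^2 \lesssim \varepsilon$ suffices, and this bound is both necessary and sufficient. Getting the constants to line up (especially the $A^2/(n-1)$ factor appearing later in $\gamma_\pm$) requires tracking the $(1\pm\delta)$ and $(1\pm\varepsilon)$ factors honestly rather than absorbing them into an $\cO(\cdot)$.
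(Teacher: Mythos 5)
Your plan is correct and is essentially the paper's proof: the paper writes $v^-_\OUT = a\Lambda + bt\Gamma$ with $\Gamma = \cF[a\Lambda]$, computes $\partial_t v^-_\OUT - \cF[v^-_\OUT] = (b-1)\Gamma - bt\bigl\{\cL[\Gamma] + \cQ(2a\Lambda + bt\Gamma,\Gamma)\bigr\}$, bounds $[\Gamma]_2 \lesssim \Lambda/r^2$, and then uses $t/r^2 \leq \rho_*^{-2}$ to dominate the error by $C\Gamma/\rho_*^2 < \varepsilon\Gamma$; your identity $\cF[w+h] = \cF[w] + d\cF_w[h] + \cQ[h]$ is just a different grouping of the same terms. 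Your claimed bound $|d\cF_w[\cF[w]]| \lesssim r^{-4}(\log r)^{-2}$ is a power of $\log$ sharper than the paper's $|\cL[\Gamma]| \lesssim \Lambda r^{-4}$ because $\cL$ annihilates $1/r^2$ to leading order; this cancellation is real but not needed — the cruder bound already yields $\rho_*\sim A(n)\varepsilon^{-1/2}$, so be aware you are proving slightly more than necessary.
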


The constant $\delta$ need not be positive in this theorem, however, if one
wants a properly ordered pair of sub- and super- solutions, i.e.~if one
wants $v_\OUT^{-}<v_\OUT^{+}$, then one must choose $\delta>0$.

\begin{proof}
  We will show that $v_\OUT^{-}$ is a subsolution in a domain
  $\rho_*\sqrt{t}\leq r\leq r_*$, where $r_*\leq r_{\#}$ and $\rho_* \gg1$
  are to be chosen.  The proof that $v_\OUT^{+}$ is a supersolution is
  entirely analogous.

  To simplify notation, we define constants
  \[
  a \doteqdot(1-\delta)\tfrac{n-1}4,\qquad b \doteqdot1-\varepsilon,
  \]
  and functions
  \[
  \Lambda(r) \doteqdot\frac1{-\log r},\qquad\Gamma(r) \doteqdot\cF \left[
  a\Lambda(r)\right] .
  \]
  We henceforth write $v_\OUT^{-}(r,t)$ as
  \begin{equation}
    \label{eq:outer-sub-super-def}v_{a,b}(r,t) \doteqdot a\Lambda(r)+bt\Gamma
    (r).
  \end{equation}

  Observe that
  \[
  \Gamma(r)=\frac{a\Lambda}{r^2}\left\{ 2(n-1)+(n-1)(1-2a)\Lambda
  -2a\Lambda^2+\tfrac32a\Lambda^3\right\} .
  \]
  It follows that there exists $r_*>0$ such that
  \[
  \Gamma(r)>C\frac{\Lambda}{r^2}\quad\text{for}\quad0<r\leq r_*.
  \]

  Because $\partial_{t}(v_{a,b})=b\Gamma$, we have
  \[
  \partial_{t}(v_{a,b})-\cF [v_{a,b}]=(b-1)\Gamma-\left\{ \cF
  [a\Lambda+bt\Gamma]-\cF [a\Lambda]\right\} .
  \]
  Here the first term $(b-1)\Gamma$ has a sign for $0<r<r_*$.  The other
  term vanishes for $t=0$.  So for small $t$, continuity of $\partial_{t}
  (v_{a,b})-\cF [v_{a,b}]$ implies that $\partial_{t}(v_{a,b})- \cF
  [v_{a,b}]<0$ if $b<1$.  However, the size of the time interval on which
  this holds will depend on $r$ and, in particular, will shrink to zero as
  $r\to0$.  We now make this argument quantitative.

  Using the splitting of $\cF $ into linear and quadratic parts, one finds
  that
  \begin{align*}
    \cF [a\Lambda+bt\Gamma]-\cF [a\Lambda] & =bt\cL
    [\Gamma]+\cQ  (a\Lambda+bt\Gamma)-\cQ  (a\Lambda)\\
    & =bt\left\{ \cL [\Gamma]+\cQ (2a\Lambda+bt\Gamma ,\ \Gamma)\right\}
  \end{align*}
  where the bilinear form $\cQ (\cdot,\cdot)$ is defined by polarization
  from the quadratic form $\cQ (\cdot)$, i.e.~via
  \[
  \cQ (f,g) \doteqdot\tfrac14\left\{ Q(f+g)-Q(f-g)\right\} .
  \]
  Both $\cL [f]$ and $\cQ (f,g)$ are conveniently estimated in terms of the
  pointwise semi-norm
  \[
  [ f]_2(r) \doteqdot\left| f(r)\right| +r\left| f_{r}(r)\right| +r^2\left|
  f_{rr}(r)\right| .
  \]
  Indeed, for $0<r<r_*$,
  \[
  \left| \cL f\right| \leq\frac{C}{r^2}[f]_2\qquad\text{and} \qquad\left|
  \cQ (f,g)\right| \leq\frac{C}{r^2 }[f]_2[g]_2.
  \]
  So for all $r<r_*$, one has
  \[
  [ a\Lambda]_2\leq C\Lambda\qquad\text{and}\qquad[\Gamma]_2\leq
  C\frac{\Lambda}{r^2}.
  \]
  Therefore on the region $\rho_*\sqrt{t}\leq r\leq r_*$, one has
  \[
  t[\Gamma]_2\leq\frac{C}{\rho_*^2}\Lambda.
  \]
  For $\rho_*>1$, this implies that
  \begin{align*}
    \left| \cL [\Gamma]+\cQ (2a\Lambda+bt\Gamma ,\ \Gamma)\right| & \leq
    C\Lambda r^{-4}+C\Lambda^2r^{-4} +C\Lambda
    ^2r^{-4}\rho_*^{-2}\\
    & \leq C\Lambda r^{-4}.
  \end{align*}
  Observing that $\Gamma\geq C\Lambda r^{-2}$ for $0<r<r_*$, we thus
  estimate
  \[
  t\left| \cL [\Gamma]+\cQ (2a\Lambda+bt\Gamma ,\ \Gamma)\right|
  \leq\frac{C}{\rho_*^2}\frac{\Lambda}{r^2} \leq \frac{C}{\rho_*^2}\Gamma.
  \]

  Returning to our estimate for $\partial_{t}(v_{a,b})-\cF [v_{a,b}]$, we
  now have
  \[
  \partial_{t}(v_{a,b})-\cF [v_{a,b}]\leq(b-1)\Gamma+\frac{C}
  {\rho_*^2}\Gamma.
  \]
  Because $\Gamma>0$, we may conclude that $v_{a,b}$ is a subsolution in
  the outer region $\rho_*\sqrt{t}\leq r\leq r_*$ provided that
  \[
  \rho_*\geq C(1-b)^{-1/2}.
  \]
  Because $1-b=\varepsilon>0$, the theorem follows by taking $A=C$.
\end{proof}

\subsection{Barriers in the parabolic region}
\label{sec:ParabolicRegion}

In the parabolic region, we use the similarity variables $\rho$, $\tau$, and
$W$ defined in (\ref{eq:Define-rho-tau}) and (\ref{eq:Define-W}).
According to (\ref{eq:W-PDE}) the function $W$ satisfies $\cD_\PAR [W]=0$,
where
\[
\cD_\PAR [W] \doteqdot W_{\tau}-\cL_\PAR [W]+\frac1{-\tau} \{W-\cQ_\PAR
[W]\}.
\]
Here $\cL_\PAR $ is as in (\ref{eq:L_par}) while $\cQ_\PAR [W]$ is the same
quadratic differential polynomial as $\cQ [v]$, but with all
$r$-derivatives replaced by $\rho $-derivatives, namely
\[
\cQ_\PAR [W] \doteqdot \frac1{\rho^2} \bigl\{ W\rho ^2W_{\rho\rho} -
\tfrac12(\rho W_\rho)^2 - \rho W_\rho W-2(n-1)W^2 \bigr\}.
\]

\begin{theorem}[Parabolic region]\label{thm:paraboliclayer-subsuper}

  Let $\varepsilon>0$, $\rho_* = A/\sqrt\varepsilon_*$ be as in
  Theorem~\ref{thm:outerlayer-subsuper}.

  There exist $B=B(n,\varepsilon)>1$, and $t_*=t_*(n,\varepsilon)$ such
  that for any $\gamma$ with $0\leq\gamma\leq\frac12$, the functions
  \[
  v_\PAR^{\pm}(r,t)\doteqdot\frac{W_\PAR^{\pm}(\rho,\tau )}{-\tau}
  \]
  are sub- $(v_\PAR^{-})$ and super- $(v_\PAR^{+})$ solutions in the region
  \[
  \Omega_\PAR \doteqdot\left\{ (r,t):\frac{B}{\sqrt{\varepsilon}}
  \sqrt{\frac{t}{-\log t}}\leq r\leq3\rho_*\sqrt{t},\quad0<t<t_{\ast
  }\right\} .
  \]
  Here
  \[
  W_\PAR^{\pm}(\rho,\tau)\doteqdot(1\pm\gamma)W_0(\rho
  )\pm\frac{B^2}{-\tau\rho^4},
  \]
  where $W_0(\rho)$ is as in (\ref{eq:W_0}).
\end{theorem}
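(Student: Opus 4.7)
The plan is to verify directly that $\cD_\PAR[W_\PAR^\pm]$ has the required sign throughout $\Omega_\PAR$: since at $v = W/(-\tau)$ one computes $v_t - \cF[v] = (t(-\tau))^{-1}\cD_\PAR[W]$ and the prefactor is positive, this is equivalent to the desired sub-/super- solution property. The key structural input, built into the very definition of $W_0$, is $\cL_\PAR[W_0] = 0$. Writing $W_\PAR^\pm = (1\pm\gamma)W_0 \pm E$ with the correction $E = B^2/((-\tau)\rho^4)$, and using that $(1\pm\gamma)W_0$ is time-independent and in the kernel of $\cL_\PAR$, I obtain
\begin{equation*}
  \cD_\PAR[W_\PAR^\pm] = \pm(E_\tau - \cL_\PAR[E]) + \frac{1}{-\tau}\{(1\pm\gamma)W_0 \pm E - \cQ_\PAR[W_\PAR^\pm]\}.
\end{equation*}

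Two short computations give $E_\tau = B^2/(\tau^2\rho^4)$ and $\cL_\PAR[\rho^{-4}] = -2(n-1)/\rho^6 - 2/\rho^4$, so the principal term becomes
\begin{equation*}
  \pm(E_\tau - \cL_\PAR[E]) = \pm\left\{\frac{B^2}{\tau^2\rho^4} + \frac{B^2}{-\tau}\left[\frac{2(n-1)}{\rho^6} + \frac{2}{\rho^4}\right]\right\}.
\end{equation*}
This already has the desired sign and, crucially, blows up like $B^2/((-\tau)\rho^6)$ as $\rho\searrow 0$. The remaining task is to bound the residual term $(-\tau)^{-1}\{(1\pm\gamma)W_0 \pm E - \cQ_\PAR[W_\PAR^\pm]\}$ by this principal good term, for suitable $B=B(n,\varepsilon)$ and $t_*=t_*(n,\varepsilon)$.

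Expanding $\cQ_\PAR[W_\PAR^\pm]$ via bilinearity gives $(1\pm\gamma)^2\cQ_\PAR[W_0] \pm 2(1\pm\gamma)\cQ_\PAR[W_0, E] + \cQ_\PAR[E]$. A direct computation from $W_0(\rho) = \tfrac{n-1}{2}(1 + 2(n-1)/\rho^2)$ shows $\cQ_\PAR[W_0]$ is a polynomial in $\rho^{-2}$ of degree three with no constant term and purely $n$-dependent coefficients; its worst-case singularity is of order $\rho^{-6}$, matching that of $\cL_\PAR[E]$. Hence $(1\pm\gamma)W_0 - (1\pm\gamma)^2\cQ_\PAR[W_0]$ is pointwise bounded by $C(n)(1 + \rho^{-6})$ on the range $\rho \leq 3\rho_*= 3A/\sqrt\varepsilon$ of $\Omega_\PAR$, and choosing $B^2$ large enough in terms of $n$ and $\varepsilon$ ensures the $B^2\rho^{-6}$ and $B^2\rho^{-4}$ terms from the principal part dominate this $W_0$-self residual uniformly. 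The remaining pieces are genuinely nonlinear: after division by $(-\tau)$, the cross term contributes factors of order $B^2/((-\tau)^2 \rho^k)$ for $k\le 8$ and $\cQ_\PAR[E]$ contributes $B^4/((-\tau)^3\rho^{10})$. The defining lower bound $\rho^2(-\tau) \geq B^2/\varepsilon$ on $\Omega_\PAR$ gives control of the form $\varepsilon B^{-2}$ times the good term, and the piece $\pm E/(-\tau) = \pm B^2/(\tau^2\rho^4)$ has the correct sign anyway. Taking $t_*$ small enough gives the needed safety margin for the subleading corrections.

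The main obstacle is bookkeeping: several distinct residual pieces with different scalings in $\rho$ and $(-\tau)$ must be controlled simultaneously, and the coefficients appearing in $\cQ_\PAR[W_0]$ are not of a fixed sign in $n$. The estimate closes only because the $\rho^{-6}$ singularity of $\cL_\PAR[E]$ exactly matches the worst-case singularity of $\cQ_\PAR[W_0]$, so a single large parameter $B$ can tame all $\rho$-singular terms at once; the smallness of $E$ on $\Omega_\PAR$ then keeps the genuinely nonlinear contributions under control. The subsolution case is strictly parallel, with every occurrence of $E$ picking up an opposite sign.
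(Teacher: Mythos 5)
Your proposal is correct and follows essentially the same route as the paper's own proof: both hinge on $\cL_\PAR[W_0]=0$, the computation $\cL_\PAR[\rho^{-4}]=-2\rho^{-6}(n-1+\rho^2)$ producing a good principal term of size $B^2/\bigl((-\tau)\rho^6\bigr)$, and the domain constraint $(-\tau)\rho^2 \geq B^2/\varepsilon$ to absorb the remaining terms after choosing $B=B(n,\varepsilon)$ large. The only difference is bookkeeping: the paper packages the quadratic estimate via the pointwise semi-norm $[W]_2 = |W| + \rho|W_\rho| + \rho^2|W_{\rho\rho}|$ (giving $|\cQ_\PAR[W]|\leq C\rho^{-6}$ in one stroke), whereas you expand $\cQ_\PAR[W_\PAR^\pm]$ bilinearly and track the three pieces $\cQ_\PAR[W_0]$, $\cQ_\PAR[W_0,E]$, $\cQ_\PAR[E]$ separately — both close for the same reason.
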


\begin{proof}
  We consider the case of a subsolution.  Recall that $cW_0$ is the general
  solution of the first-order \textsc{ode} $\cL_\PAR [W]=0$.  Moreover,
  \begin{equation}
    \label{eq:lpar-of-rhominus4}\cL_\PAR [\rho^{-4}] =
    -2\rho^{-6}(n-1+\rho^2) \leq-2 (n-1)\rho^{-6}.
  \end{equation}
  To show that $W_\PAR^{-}$ is a subsolution, we will verify that
  $\cD_\PAR[W_\PAR^{-}]<0$ on $\Omega_\PAR$.  To simplify notation, we
  write $W\equiv W_\PAR^{-}$ for the remainder of the proof.

  Observe that
  \[
  [W]_2 \doteqdot|W| +\rho| W_{\rho}| +\rho^2|W_{\rho\rho}| \leq C\left(
  \rho^{-2} + B^2|\tau|^{-1}\rho^{-4} \right)
  \]
  if $\rho\leq3\rho_*=3A/\sqrt{\varepsilon}$, where $C=C(n,A,\varepsilon)$.
  In the parabolic region $\Omega_\PAR $ defined above, one has
  \begin{equation}
    \label{eq:in-omega-par}0< \frac{B}{(-\tau)\rho^2} \leq\varepsilon,
  \end{equation}
  whence we get
  \[
  [W]_2 \leq C\rho^{-2}
  \]
  and also
  \[
  |\cQ_\PAR (W)| \leq\frac{C}{\rho^2} [W]_2^2 \leq \frac{C'}{\rho^6}.
  \]
  Now we compute that
  \[
  (-\tau)\cD_\PAR [W] = - \frac{2B^2} {(-\tau)\rho^4} + B^2\cL_\PAR
  [\rho^{-4}] + (1-\gamma)W_0 - \cQ_\PAR [W].
  \]
  Assuming that $\tau\leq-1$, we find, using (\ref{eq:lpar-of-rhominus4}),
  (\ref{eq:in-omega-par}), and also $\rho\leq3\rho_*=3A/\sqrt{\varepsilon}$,
  that
  \[
  (-\tau)\cD_\PAR [W] \leq\frac{C-2 (n-1) B^2}{\rho^6}
  \]
  in the parabolic region.  Hence we conclude that the function $W\equiv
  W_\PAR^{-}$ will indeed be a subsolution provided that $B^2 \geq\max\{ 1,
  C/2 (n-1) \} $.

  The left- and right- end points of the parabolic region at any time
  $\tau$ are given by $\rho=B/\sqrt{\varepsilon(-\tau)}$ and $\rho=3\rho_*
  =3A/\sqrt {\varepsilon}$, respectively.  So this region will be nonempty
  if $-\infty <\tau<-B^2/(3A)^2$.  Thus we choose $\tau_*=-\max\left\{
  1,B^2 /(3A)^2\right\} $.

  Construction of supersolutions $W_\PAR^{+}$ is similar.
\end{proof}

\subsection{Barriers in the inner region}
\label{sec:InnerRegion}

In the inner region, we work with the space and time variables $\sigma$,
$\theta$ defined in \eqref{eq:theta-sigma-defined}.  We consider $V(\sigma
,\theta)=v(r,t)$, as in (\ref{eq:V-inner-defined}).  Then, according to
(\ref{eq:ExactEquationInnerRegion}), Ricci flow is equivalent to $\cD_\IN
[V] =0$, where
\begin{align*}
  \cD_\IN [V] & \doteqdot\theta\theta_{t}\left( \theta
  V_{\theta}-\sigma V_{\sigma}\right)  -\cF [V]\\
  & = \theta^2 V_{t} - \theta\theta_{t}\sigma V_{\sigma}- \cF [V].
\end{align*}
The formal solution we found in Section~\ref{sec:formal-inner} is of the
form
\[
V (\sigma, t) = V_0(\sigma) + \lambda\theta\theta_{t} V_1(\sigma),
\]
where
\[
V_0(\sigma) = \bry (k\sigma),\text{ and } V_1(\sigma) = k^{-2}\cry
(k\sigma).
\]
In Section~\ref{sec:formal-inner}, we chose $\lambda=1$ and $k= 1/(n-1)$ in
order to match this solution with the formal solution in the parabolic
region.  Here we will show that small variations in $k$ and $\lambda$ lead
to sub- and super- solutions.

\begin{theorem}[Inner Region]\label{InnerResult}
  Let $\varepsilon$ and $B$ be as before.  There exists $t_* = t_*(n,
  \varepsilon, B)$ such that for any $k\in[\frac 1{2(n-1)}, \frac2{n-1}]$,
  the functions
  \[
  v_\IN^\pm(r, t) = V_\IN^{\pm}(\sigma,\theta) \doteqdot \bry (k\sigma) +
  (1\mp \varepsilon) \theta\theta_{t} k^{-2}\cry (k\sigma)
  \]
  are sub- $(v_\IN^{-})$ and super- $(v_\IN^{+})$ solutions in the region
  \[
  \Omega_\IN \doteqdot \left\{ (r,t):0<r\leq3\frac{B}{\sqrt {\varepsilon}}
  \sqrt{\frac{t}{-\log t}},\quad0<t<t_*\right\} .
  \]
\end{theorem}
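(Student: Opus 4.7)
The plan is to compute $\cD_\IN[V_\IN^\pm]$ directly and check it has the correct sign throughout $\Omega_\IN$ for $t$ sufficiently small. Writing $\mu(t) = (1\mp\varepsilon)\theta\theta_t$, $V_0(\sigma) = \bry(k\sigma)$, and $V_1(\sigma) = k^{-2}\cry(k\sigma)$, so that $V_\IN^\pm = V_0 + \mu V_1$, the quadratic structure of $\cF_\IN$ gives
\[
\cF_\IN[V_0 + \mu V_1] = \cF_\IN[V_0] + \mu\,d\cF_{V_0}[V_1] + \mu^2\,\cQ_\IN[V_1].
\]
Two identities from Section~\ref{sec:formal-inner} drive the calculation: $\cF_\IN[V_0] = 0$ (the Bryant soliton ODE is invariant under the rescaling $\xi\mapsto k\xi$) and $d\cF_{V_0}[V_1] = -\sigma V_0'(\sigma)$ (the last assertion of Lemma~\ref{lem:bryant-next-term}). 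Substituting these into $\cD_\IN[V] = \theta^2 V_t - \theta\theta_t\,\sigma V_\sigma - \cF_\IN[V]$ and using $\mu - \theta\theta_t = \mp\varepsilon\,\theta\theta_t$, I would obtain
\[
\cD_\IN[V_\IN^\pm] = \mp\varepsilon\,\theta\theta_t\,\sigma V_0'(\sigma) + \theta^2\mu_t\, V_1 - \theta\theta_t\,\mu\,\sigma V_1' - \mu^2\,\cQ_\IN[V_1].
\]

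The first term carries the correct sign because $\bry$ is strictly decreasing on $(0,\infty)$ (Appendix~\ref{sec:Bryant}), so $V_0'(\sigma)<0$ for $\sigma>0$, and $\theta\theta_t>0$ for small $t>0$; hence the leading term is negative for $V_\IN^-$ and positive for $V_\IN^+$, as required. The remaining task is to bound the other three terms in absolute value by a strictly smaller quantity than the leading one throughout $\Omega_\IN$. The explicit asymptotics $\theta\theta_t = (1+o(1))/(-2\log t)$ and $\theta^2\mu_t = \cO(|\log t|^{-3})$ as $t\searrow 0$ show that the leading term has size $\varepsilon\,|\log t|^{-1}|\sigma V_0'|$, while each of the three remainder terms has size $|\log t|^{-2}$ times a $\sigma$-dependent factor involving $V_1$, $\sigma V_1'$, or $\cQ_\IN[V_1]$.

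The delicate point is the comparison near $\sigma = 0$, where both the leading term and the errors degenerate. The expansions $\bry(\xi) = 1 + \tfrac12\bry''(0)\xi^2 + \cO(\xi^4)$ with $\bry''(0)<0$ (Lemma~\ref{lem:BabyBryant}) and $\cry(\xi) = M\xi^2 + o(\xi^2)$ (Lemma~\ref{lem:bryant-next-term}) imply that, uniformly for $k \in [\tfrac1{2(n-1)}, \tfrac2{n-1}]$ and $0<\sigma\leq 3B/\sqrt\varepsilon$,
\[
|\sigma V_0'(\sigma)| \geq c\,\sigma^2, \qquad |V_1(\sigma)| + |\sigma V_1'(\sigma)| + |\cQ_\IN[V_1](\sigma)| \leq C\,\sigma^2,
\]
for constants $c,C$ depending only on $n$, $\varepsilon$, $B$. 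The common $\sigma^2$ factor cancels, so the ratio of remainder to leading term is $\cO(1/(\varepsilon\,|\log t|))$, which is $<1$ once $t \leq t_*(n,\varepsilon,B)$. The main obstacle is precisely this uniform $\sigma^2$ cancellation at the tip: it is the quadratic vanishing of $\cry$ at the origin, recorded in Lemma~\ref{lem:bryant-next-term}, that matches all three error terms to the leading term's $\sigma^2$ decay, and without that matching the argument would fail as $\sigma\searrow 0$.
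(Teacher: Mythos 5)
Your derivation of the decomposition
\[
\cD_\IN[V_\IN^\pm] = \mp\varepsilon\,\theta\theta_t\,\sigma V_0' + \theta^2\mu_t\, V_1 - \theta\theta_t\,\mu\,\sigma V_1' - \mu^2\,\cQ_\IN[V_1]
\]
via the quadratic structure of $\cF_\IN$, the identities $\cF_\IN[V_0]=0$ and $d\cF_{V_0}[V_1]=-\sigma V_0'$, and the uniform $\sigma^2$-matching bounds from Lemmas~\ref{lem:BabyBryant} and \ref{lem:bryant-next-term} is exactly the argument used in the paper's proof, including the final observation that the error-to-leading-term ratio is $\cO(1/(\varepsilon|\log t|))$, which the paper records as $\cD[V_\IN^-]\leq\{-\eta\varepsilon+C\theta\theta_t\}\theta\theta_t\sigma^2$. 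Your proposal is correct and essentially identical in approach.
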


We draw the reader's attention to the fact that $\theta\theta_{t} \cry
(k\sigma)>0$.  So for fixed $k$, the subsolution $V_{\IN }^{-}$ is larger
than the formal solution (which has $\varepsilon=0$), while the
supersolution $V_\IN^{+}$ is smaller.  To get a properly ordered pair of
sub- and super- solutions, we must (and can) choose $V_\IN^{+}$ and
$V_\IN^{-}$ with different values of $k$.

\begin{proof}
  We will prove that
  \[
  V_\IN^{-}=V_0(\sigma)+(1+\varepsilon)\theta\theta _{t}V_1(\sigma), \text{
  with }V_0(\sigma)=\bry (k\sigma)\text{ and }V_1(\sigma)=k^2\cry
  (k\sigma),
  \]
  is a subsolution in the region $\Omega_\IN $, i.e.~for
  \[
  0<\sigma<3\sigma_*=3B/\sqrt{\varepsilon},\qquad0<t<t_*,
  \]
  where $t_*$ is suitably chosen.  The proof that $V_\IN^{+}$ is a
  supersolution is similar.

  Upon substitution, we find that
  \begin{equation}
    \cD_\IN [V_\IN^{-}]=\theta^2(1+\varepsilon
    )(\theta\theta_{t})_{t}V_1-\theta\theta_{t}\sigma V
    _0'-(1+\varepsilon)(\theta\theta_{t})^2\sigma V_1
    '-\cF \bigl[V_0+(1+\varepsilon)\theta\theta_{t}
    V_1\bigr]. \label{eq:Vin-substituted}
  \end{equation}
  We can expand the last term, keeping in mind that $\cF [V _0]=0$, and
  that $\cF [V]$ is a quadratic polynomial in $V$ and its derivatives.  We
  get
  \[
  \cF \bigl[V_0+(1+\varepsilon)\theta\theta_{t}V
  _1\bigr]=(1+\varepsilon)\theta\theta_{t}d\cF_{V_0}
  [V_1]+(1+\varepsilon)^2(\theta\theta_{t})^2\cQ [V_1],
  \]
  where
  \[
  \cQ [V]=VV_{\sigma\sigma}-\tfrac12(V_{\sigma})^2-\tfrac1
  {\sigma}VV_{\sigma}-\tfrac{2(n-1)}{\sigma^2}V^2
  \]
  is the quadratic part of $\cF [V]$.  Applying this expansion to
  (\ref{eq:Vin-substituted}), we find that
  \begin{equation}
    \cD_\IN [V_\IN^{-}]=
    \varepsilon\theta\theta_{t}\sigma V_0'
    + (1+\varepsilon) \theta^2(\theta\theta_t)_t V_1
    - (1+\varepsilon)(\theta\theta_t)^2\sigma V_1'
    - (1+\varepsilon)^2(\theta\theta_t)^2\cQ [V_1]
    \label{eq:Vin-substituted2}
  \end{equation}
  The key to our argument is that the first term dominates the others on
  the interval $0<\sigma<\sigma_*=3B/\sqrt{\varepsilon}$.

  The facts from Lemma~\ref{lem:BabyBryant} that the Bryant soliton $\bry
  (\cdot)$ is strictly decreasing, and that it is given by $\bry
  (\sigma)=1+b_2\sigma^2+\cdots$ for small $\sigma$, with $b_2<0$, tell us
  that there is a constant $\eta>0$ such that
  \begin{equation}
    -\sigma V_0'(\sigma) \ge \eta\sigma^2
    \text{ for }\sigma\in(0,3\sigma_*).
    \label{eq:sig-Vsig-lowerbound}
  \end{equation}
  The asymptotics of $\cry (\sigma)$ both at $\sigma=0$ and $\sigma=\infty$
  from Lemma~\ref{lem:bryant-next-term} tell us that for some
  $C=C(n,\cry)<\infty$, one has
  \begin{equation}
    |V_1|+|\sigma V_1'|+|\cQ [V
    _1]|\leq C\sigma^2\text{ for all }\sigma\in(0,3\sigma_*),
    \label{eq:inner-other-terms-upper-bound}
  \end{equation}
  provided that $\sigma_*>1$.  Finally, by direct computation, one finds
  that
  \[
  \theta\theta_{t}=\tfrac12\bigl(\frac1{-\log t}+\frac1{(-\log t)^2
  }\bigr),\qquad\theta^2(\theta\theta_{t})_{t}=\frac{1+\frac2{-\log t}
  }{2(-\log t)^3},
  \]
  so that $|\theta^2(\theta\theta_{t})_{t}|\leq C(\theta\theta_{t})^3$ for
  some $C<\infty$, and for small $t$.

  Together with (\ref{eq:sig-Vsig-lowerbound}) and
  (\ref{eq:inner-other-terms-upper-bound}), we find that
  \begin{equation}
    \cD [V_\IN ]\leq\left\{  -\eta\varepsilon+C\theta\theta
    _{t}\right\}  \theta\theta_{t}\sigma^2 \label{eq:Vin-substituted3}
  \end{equation}
  for all $\sigma\in(0,3\sigma_*)$.  Since $\theta\theta_{t}=o(1)$ as
  $t\searrow0$, we find that $V_\IN^{-}$ is indeed a supersolution for
  small enough $t$.
\end{proof}

\subsection{Gluing the outer and parabolic barriers}
\label{sec:GlueOuterToParabolic}

The barriers $W_\PAR^{\pm}$ constructed in
Section~\ref{sec:ParabolicRegion} generate sub- and super- solutions
\[
v_\PAR^{\pm}=\frac{W_\PAR^{\pm}}{-\log t}
\]
for the original equation $v_{t}=\cF [v]$ in the parabolic region.

\begin{lemma}
  \label{thm:can-glue}
  Let $A, \epsilon$ and $\delta$ be as before, and set
  \begin{equation} \label{eq:gamma-specified} \gamma=
    \gamma_-(\delta,\varepsilon) = \delta+ \frac{(1-\delta)\varepsilon^2}
    {\varepsilon+ 2A^2/(n-1)}.
  \end{equation}
  If $-\tau_*$ is sufficiently large, then the gluing condition
  (\ref{sub-solution-by-gluing}) is satisfied for all $\tau<\tau_*$.
\end{lemma}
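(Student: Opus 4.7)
The plan is to evaluate $v_\PAR^-$ and $v_\OUT^-$ at $r = c\rho_*\sqrt{t}$ for $c \in \{1, 3\}$, expand each in powers of $1/(-\tau)$, and verify the sign of the leading term using the specified $\gamma$. The first step is to compute the leading coefficients. Setting $\mu_c := 2(n-1)/(c\rho_*)^2 = 2(n-1)\varepsilon/(c^2 A^2)$, the definition of $W_\PAR^-$ gives
\[
v_\PAR^-(c\rho_*\sqrt{t},t) = \frac{(1-\gamma)(n-1)}{-2\tau}\,(1+\mu_c) \;-\; \frac{B^2}{\tau^2 (c\rho_*)^4}.
\]
For $v_\OUT^-$, combining the expansion $v_0(c\rho_*\sqrt{t}) = (n-1)/(-2\tau)\cdot(1 + O(\log\rho_*/(-\tau)))$ with the identity $\cF[v_0] = (2(n-1)v_0/r^2)\{1 + O(1/|\log r|)\}$ from Section~\ref{sec:formal-outer}, and accounting for the $(1-\delta)$ prefactor via the splitting $\cF[(1-\delta)v_0] = (1-\delta)\cL[v_0] + (1-\delta)^2 \cQ[v_0]$, one would obtain
\[
v_\OUT^-(c\rho_*\sqrt{t},t) = \frac{(1-\delta)(n-1)}{-2\tau}\,\bigl(1 + (1-\varepsilon)\mu_c\bigr) \;+\; O(1/\tau^2).
\]

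Subtracting, the dominant part of $v_\PAR^- - v_\OUT^-$ at $r = c\rho_*\sqrt{t}$ becomes
\[
\frac{n-1}{-2\tau}\,\Bigl\{(1-\delta)\varepsilon\mu_c - (\gamma - \delta)(1+\mu_c)\Bigr\}.
\]
A direct algebraic substitution of $\gamma - \delta = (1-\delta)\varepsilon^2/(\varepsilon + 2A^2/(n-1))$ reduces each bracket to a simple rational expression in the parameters: at $c=1$ the bracket evaluates to $3(n-1)(1-\delta)\varepsilon^2/((n-1)\varepsilon + 2A^2)$, which is strictly positive, while at $c=3$ it evaluates to $-5(n-1)(1-\delta)\varepsilon^2/(9((n-1)\varepsilon + 2A^2))$, which is strictly negative. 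This is the key structural point motivating the choice of $\gamma$: the leading coefficient is engineered to switch sign between $\rho_*$ and $3\rho_*$, with a definite nonzero margin $c_0 = c_0(n,A,\varepsilon,\delta)>0$ at each endpoint.

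The final step is to control the $O(1/\tau^2)$ errors. These have three sources: (i) the explicit $-B^2/(\tau^2(c\rho_*)^4)$ term in $v_\PAR^-$, of size $O(B^2\varepsilon^2/(A^4\tau^2))$; (ii) the higher-order correction in expanding $v_0(c\rho_*\sqrt{t}) = (n-1)/(-2\tau - 4\log(c\rho_*))$ in powers of $\log\rho_*/(-\tau)$; and (iii) the subleading linear and quadratic contributions to $\cF[(1-\delta)v_0]$ coming from $v_r/r$ terms of lower order and from the quadratic piece $\cQ$. Each is $O(1/\tau^2)$ with constants depending only on $(n, \varepsilon, \delta, A, B)$, whereas the leading sign-determining contribution is of exact order $1/|\tau|$ with lower bound $c_0/|\tau|$. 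Choosing $|\tau_*|$ larger than $C/c_0$ for the appropriate $C$ therefore preserves the sign of the leading term for all $\tau < \tau_*$, yielding both inequalities in \eqref{sub-solution-by-gluing}. The main (and mild) obstacle is the bookkeeping needed to confirm that each error constant is genuinely $\tau$-independent, so that a single threshold $\tau_*$ works for both endpoints simultaneously; no conceptual difficulty beyond this arises.
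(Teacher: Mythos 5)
Your proof is correct and follows essentially the same strategy as the paper: expand both barriers at $\rho=c\rho_*$ in powers of $1/(-\tau)$, isolate the $O(1/|\tau|)$ leading term, and check that the specified $\gamma$ makes that leading coefficient switch sign between $c=1$ and $c=3$, with the $O(\tau^{-2})$ corrections uniformly subordinate for $-\tau$ large. The only cosmetic difference is that you verify the two sign conditions by direct algebraic substitution of $\gamma$ into the bracket, whereas the paper notes that the ratio $W_\OUT^{-\infty}/W_\PAR^{-\infty}$ is a monotone function of $\rho$ crossing $1$ exactly at $\rho=2\rho_*$, which yields both inequalities simultaneously.
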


\begin{proof}
  We will verify the relations
  \[
  v_\PAR (\rho_*\sqrt{t},t)>v_\OUT (\rho_* \sqrt{t},t)\text{ and }v_\PAR
  (3\rho_*\sqrt{t} ,t)<v_\OUT (3\rho_*\sqrt{t},t)
  \]
  in the $W(\rho,\tau)$ rather than the $v(r,t)$ notation.

  When written in the $(\rho,\tau)$ variables, the subsolutions from the
  outer region take the form
  \[
  W_\OUT^{-}(\rho,\tau)\ =(-\tau)a{\Lambda}\left\{ 1+(1-\varepsilon
  )\frac{2(n-1)+\cO (\Lambda)}{\rho^2}\right\} ,
  \]
  where
  \[
  a \doteqdot(1-\delta)\frac{n-1}4.
  \]
  If $\rho_*\leq\rho\leq3\rho_*$, then
  \[
  \Lambda=\frac1{-\log r}=\frac1{-\log\rho-\frac12\log t}=-2\tau ^{-1}+\cO
  (\tau^{-2}).
  \]
  Hence
  \[
  W_\OUT^{-}(\rho,\tau)= (1-\delta)\frac{n-1}2 \left\{
  1+\frac{2(n-1)(1-\varepsilon)}{\rho^2} \right\} +\cO (\tau
  ^{-1}),\quad(\tau\to-\infty),
  \]
  uniformly in $\rho_*\leq\rho\leq3\rho_*$.

  The subsolutions from the parabolic region satisfy
  \[
  W_\PAR^{-}(\rho,\tau)= (1-\gamma)\frac{n-1}2 \left\{
  1+\frac{2(n-1)}{\rho^2}\right\} +\cO (\tau^{-1}),\quad
  (\tau\to-\infty).
  \]
  The outer and parabolic subsolutions have limits as $\tau\to-\infty$,
  namely
  \begin{align*}
    W_\OUT^{-\infty}(\rho) & \doteqdot(1-\delta)\frac{n-1}
    2\left\{  1+(1-\varepsilon)\frac{2(n-1)}{\rho^2}\right\}, \\
    W_\PAR^{-\infty}(\rho) & \doteqdot(1-\gamma)\frac{n-1} 2\left\{
    1+\frac{2(n-1)}{\rho^2}\right\} ,
  \end{align*}
  respectively.  At $\rho=0,\infty$, one finds that
  \begin{equation}
    \frac{W_\OUT^{-\infty}(\rho)}{W_\PAR^{-\infty}(\rho
    )}\longrightarrow
    \begin{cases}
      \dfrac{1-\delta}{1-\gamma} & \text{ as $\rho\to\infty$,}\\
      (1-\varepsilon)\dfrac{1-\delta}{1-\gamma} & \text{ as $\rho\to0$.}
    \end{cases}
    \label{eq:out-par-ratio}
  \end{equation}
  Requiring $W_\OUT^{-\infty}(2\rho_*)=W_{\PAR
  }^{-\infty}(2\rho_*)$ leads to \eqref{eq:gamma-specified}.  (Use
  $\rho_*=A/\sqrt{\varepsilon}$.) If \eqref{eq:gamma-specified} holds, then
  $\rho=2\rho_*$ is the only solution of $W_\OUT^{-\infty}
  (\rho)=W_\PAR^{-\infty}(\rho)$, and it follows from
  (\ref{eq:out-par-ratio}) that
  \[
  W_\OUT^{-\infty}(\rho_*)<W_\PAR^{-\infty} (\rho_*)\text{ and
  }W_\OUT^{-\infty}(3\rho_*)>W_\PAR^{-\infty}(3\rho_*).
  \]
  In particular, the gluing condition~(\ref{sub-solution-by-gluing}) is
  met.

  Because $W_\OUT^{-}$ and $W_\PAR^{-}$ are small ($\cO (\tau^{-1})$)
  perturbations of $W_\OUT^{-\infty}\ $ and $W_\PAR^{-\infty}$,
  respectively, these inequalities will continue to hold for all
  sufficiently large $-\tau$.
\end{proof}

A similar statement holds true for supersolutions.

\subsection{Gluing the inner and parabolic barriers}
\label{sec:GlueInnerToParabolic}

Recall that the inner and parabolic regions are
\begin{align*}
  \Omega_\IN & =\left\{ 0<\sigma\leq3\sigma_*,\quad
  0<t<t_*\right\}  ,\\
  \Omega_\PAR & =\left\{ \sigma_* \leq \sigma \leq 3A\sqrt{-\tau
  }/\sqrt{\varepsilon}, \quad0<t<t_* \right\} ,
\end{align*}
respectively, where
\[
\sigma_*=B/\sqrt{\varepsilon}.
\]
We now verify the ``gluing condition'' between the inner and parabolic
regions.

\begin{lemma} \label{thm:can-glue-too} If
  \begin{equation}
    k = k_-(\delta, \varepsilon)
    =\frac1{n-1}\,\frac1{\sqrt{1-\gamma-\varepsilon/2(n-1)^2}}
    \label{eq:k-inner-choice}
  \end{equation}
  and if $B$ is sufficiently large (depending only on $n$), then the gluing
  conditions
  \begin{equation} \label{eq:inner-parabolic-glue} V_\IN^-(\sigma_*,\tau) >
    V_\PAR^-(\sigma_* ,\tau) \text{ and }
    V_\IN^{-}(3\sigma_*,\tau)<V_\PAR^-(3\sigma_*,\tau)
  \end{equation}
  are satisfied for all $\tau<\tau_*$, provided that $-\tau_*$ is
  sufficiently large.
\end{lemma}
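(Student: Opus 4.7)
The plan is to follow the template of Lemma~\ref{thm:can-glue}: write both $V_\IN^-$ and $V_\PAR^-$ in the common inner variables $(\sigma,\tau)$, pass to the $\tau\to-\infty$ limit at each fixed $\sigma\in[\sigma_*,3\sigma_*]$, verify the two strict inequalities for the limiting profiles, and then invoke uniform convergence to extend them to all $\tau<\tau_*$ with $-\tau_*$ sufficiently large.

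First, using $\rho=\sigma/\sqrt{-\tau}$ and the explicit form of $W_0$, a direct computation gives
\[
V_\PAR^-(\sigma,\tau) = \frac{(1-\gamma)(n-1)}{2(-\tau)} + \frac{(1-\gamma)(n-1)^2}{\sigma^2} - \frac{B^2}{\sigma^4},
\]
while $\theta\theta_t=[1+o(1)]/(-2\tau)$ yields
\[
V_\IN^-(\sigma,\tau) = \bry(k\sigma) + [1+o(1)]\,\frac{(1+\varepsilon)\cry(k\sigma)}{-2k^2\tau}.
\]
Both converge uniformly on $[\sigma_*,3\sigma_*]$ as $\tau\to-\infty$ to
\[
V_\IN^{-\infty}(\sigma)=\bry(k\sigma), \qquad V_\PAR^{-\infty}(\sigma) = \frac{(1-\gamma)(n-1)^2}{\sigma^2} - \frac{B^2}{\sigma^4}.
\]

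Next, I would invoke the Bryant asymptotics from Appendix~\ref{sec:Bryant}, namely $\bry(k\sigma)=(k\sigma)^{-2}+o(\sigma^{-2})$, together with the choice \eqref{eq:k-inner-choice}, which produces $k^{-2}=(n-1)^2(1-\gamma)-\varepsilon/2$. Then
\[
V_\IN^{-\infty}(\sigma)-V_\PAR^{-\infty}(\sigma) = -\frac{\varepsilon}{2\sigma^2} + \frac{B^2}{\sigma^4} + o\!\left(\tfrac{1}{\sigma^2}\right),
\]
whose principal part vanishes at $\sigma=\sqrt{2}\,\sigma_*$, strictly inside $(\sigma_*,3\sigma_*)$. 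At $\sigma_*=B/\sqrt\varepsilon$ the principal part equals $\varepsilon^2/(2B^2)>0$, and at $3\sigma_*$ it equals $-7\varepsilon^2/(162B^2)<0$. The crucial quantitative step is to take $B=B(n,\varepsilon)$ large enough that the Bryant remainder satisfies, say, $|\bry(k\sigma)-(k\sigma)^{-2}|<\varepsilon/(10\sigma^2)$ uniformly on $[\sigma_*,3\sigma_*]$; this decides the signs and establishes the desired strict inequalities for the limiting profiles. Uniform convergence on $[\sigma_*,3\sigma_*]$ then extends them to all $\tau<\tau_*$ with $-\tau_*$ sufficiently large, yielding \eqref{eq:inner-parabolic-glue}.

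The main obstacle is precisely this quantitative control of the Bryant error: the margin at the endpoints is only $O(\varepsilon^2/B^2)$, while a priori the appendix delivers only $o(\varepsilon/B^2)$ at $\sigma=\sigma_*$, so one must enlarge $B$ (relative to $\varepsilon$) enough to absorb the remainder. The argument for supersolutions is entirely analogous, with $k_+$ from \eqref{eq:gk-plus-choice} replacing $k_-$; the signs of the correction $\pm B^2/\sigma^4$ and of the $(1\mp\varepsilon)$ factor reverse consistently so that the crossing again falls inside $(\sigma_*,3\sigma_*)$.
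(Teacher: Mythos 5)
Your approach follows the same template as the paper — pass to the $\tau\to-\infty$ limit at fixed $\sigma$, verify the two strict endpoint inequalities for the limiting profiles, then extend by uniform convergence — and the arithmetic of the principal part is correct (the crossing at $\sigma=\sqrt 2\,\sigma_*$, the margins $\varepsilon^2/(2B^2)$ at $\sigma_*$ and $-7\varepsilon^2/(162B^2)$ at $3\sigma_*$).

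The genuine gap is your handling of the Bryant remainder, and you have essentially diagnosed it yourself. Because you invoke only the crude estimate $\bry(\sigma)=\sigma^{-2}+o(\sigma^{-2})$, you are forced to enlarge $B$ with $\varepsilon$ to dominate the error, i.e.\ you end up with $B=B(n,\varepsilon)$. But the lemma asserts a bound on $B$ depending \emph{only} on $n$. The resolution is already available in Appendix~\ref{sec:Bryant}: Lemma~\ref{lem:BabyBryant}(3) gives the full expansion $\bry(\sigma)=\sigma^{-2}+\tfrac{4-n}{n-1}\sigma^{-4}+\cdots$, so the remainder is $\cO(\sigma^{-4})$, not merely $o(\sigma^{-2})$. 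Since $k$ lies in a fixed $n$-dependent interval, this yields $\bigl|\sigma^2\bry(k\sigma)-k^{-2}\bigr|\le C_n\sigma^{-2}$ for all $\sigma\ge1$, with $C_n$ depending only on $n$. Substituting $\sigma=m\sigma_*=mB/\sqrt\varepsilon$ gives
\[
\sigma^2\bigl(V_\IN^{-\infty}-V_\PAR^{-\infty}\bigr)
= -\frac{\varepsilon}{2}+\frac{\varepsilon}{m^2}\bigl(1+\vartheta\,C_nB^{-2}\bigr),
\qquad |\vartheta|\le1,
\]
and then $B^2\ge 100\,C_n$ (a condition depending only on $n$) makes the bracket negligible, so the sign is positive for $m=1$ and negative for $m=3$. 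No $\varepsilon$-dependence of $B$ is needed; the rest of your argument, including the extension to $\tau<\tau_*$ and the parallel remark for supersolutions, is fine.
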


\begin{proof}
  By Theorem~\ref{InnerResult}, the subsolutions in the inner region
  $\Omega_\IN $ have a limit as $t\searrow0$, equivalently, as
  $\tau\searrow-\infty$.  The limit is
  \begin{equation} \label{eq:SmallTime1} V_\IN^{-\infty}(\sigma) =
    \lim_{t\searrow0} V_\IN^{-}(\sigma,t) = \bry(k\sigma).
  \end{equation}
  Our asymptotic expansion of the Bryant soliton (Lemma
  \ref{lem:BabyBryant}) implies that
  \begin{equation} \label{eq:Define-V-in} \bry(k\sigma) = (k\sigma)^{-2} +
    \cO \left( (k\sigma)^{-4}\right), \qquad (\sigma\to\infty).
  \end{equation}
  By Theorem~\ref{thm:paraboliclayer-subsuper}, the subsolutions in the
  parabolic region $\Omega_\PAR $ also have limits at $t=0, \tau=-\infty$,
  namely
  \begin{equation} \label{eq:SmallTime2} V_\PAR^{-\infty}(\sigma) =
    \lim_{t\searrow0} v_\PAR^{-}(\sigma\theta, t)
    =\frac{(1-\gamma)(n-1)^2}{\sigma^2} - \frac{B^2}{\sigma^4}
  \end{equation}
  Equations~(\ref{eq:SmallTime1}) and (\ref{eq:SmallTime2}) show that to
  establish (\ref{eq:inner-parabolic-glue}), it will suffice to prove
  \begin{equation} \label{eq:vin-vpar-comparison} V_\IN^{-\infty}
    (\sigma_*) > V_\PAR^{-\infty} (\sigma_*) \text{ and } V_\IN^{-\infty}
    (3\sigma_*) < V_\PAR^{-\infty} (3\sigma_*)
  \end{equation}
  and then to choose $-\tau_*$ sufficiently large such that $V_\IN^{-}(\sigma_*,\tau)
  > V_\PAR^{-}(\sigma_*,\tau)$ and $V_\IN ^{-}(3\sigma_*,\tau) <
  V_\PAR^{-}(3\sigma_*,\tau)$ are preserved for all $\tau<\tau_*$.

  Without loss of generality, we may assume that $\frac12(n-1)^{-1}
  <k<2(n-1)^{-1}$.  Then by (\ref{eq:Define-V-in}), there is a constant
  $C_n<\infty$ such that
  \begin{equation}
    \left|\sigma^2 V_\IN^{-\infty} -k^{-2}\right|
    \leq C_n\sigma^{-2}
    \label{eq:k-bounds}
  \end{equation}
  for all $\sigma\geq1$ and all $k$ under consideration.

  Hence we have
  \[
  \sigma^2\left( V_\IN^{-\infty} - V_\PAR^{-\infty} \right) = k^{-2} -
  (1-\gamma)(n-1)^2 + (B^2+\vartheta C_n)\sigma^{-2},
  \]
  where $|\vartheta|\leq 1$.  In order to verify
  \eqref{eq:vin-vpar-comparison}, we must find the sign of the \textsc{lhs} for
  $\sigma=m\sigma_*$, with $m=1$ or $m=3$. Using $\sigma_* =
  B\varepsilon^{-1/2}$, we find that when $\sigma = m\sigma_*$,
  \[
  \sigma^2\bigl( V_\IN^{-\infty} - V_\PAR^{-\infty} \bigr) = k^{-2} -
  (1-\gamma)(n-1)^2 + \frac{\varepsilon}{m^2}(1+\vartheta C_nB^{-2}).
  \]
  In particular, \eqref{eq:vin-vpar-comparison} will hold if this quantity
  is positive for $m=1$ and negative for $m=3$.  We can achieve this by
  first choosing $B$ so large that we can ignore the term containing
  $\vartheta$: $B\geq B_n \doteqdot \sqrt{100C_n}$ will do.  Then we choose
  $k$ so as to satisfy
  \[
  k^{-2} - (1-\gamma)(n-1)^2 = -\frac\varepsilon2.
  \]
  Solving this for $k$ leads to the value $k_-(\delta, \varepsilon)$
  mentioned in the Lemma.  Once $k$ is given this value and $B$ is chosen
  large enough, \eqref{eq:vin-vpar-comparison} will hold.

\end{proof}

A similar statement holds for supersolutions.

\section{Subsequential convergence of the regularized solutions}
\label{sec:Compactness}

In this section, we prove compactness of the family
$\{g_\omega(t) \mid 0 < t \leq T\}$ for some $0<T<T_0$. We find a
convergent subsequence $g_{\omega_j}(t) \to g_*(t)$; we show that
$g_*(t)$ is a smooth solution on $\cS^{n+1}$ for small $t>0$; and
we verify that $g_*(t)$ is indeed a forward evolution from the
singular initial metric $g_0$.

We do this directly, rather than by invoking Hamilton's compactness theorem,
which instead gives $\eta_j^*(g_{\omega_j})(t) \to \tg_*(t)$ for some sequence
$\{\eta_j:\cS^{n+1}\to\cS^{n+1}\}$ of time-independent diffeomorphisms
fixing the north pole $\NP\in\cS^{n+1}$ \cite{HamCom}.

The main problem in establishing compactness is that, although we have
precise control of the $v$ function near the singular point, this
information is only valid in a neighborhood of the form $\{P\in\cS^{n+1}
\mid \psi(P, t) \leq \br\}$ for some $\br>0$.

\subsection{Splitting $\cS^{n+1}$ into regular and singular parts}
\label{sec:regular-singular-splitting}

The following lemma allows us to split the manifold $\cS^{n+1}$ into a
regular and a singular part.
\begin{lemma}[Existence of collars]\label{lem:collar}
  For any $0<m<1$ and small enough $\alpha>0$, there exists $T=T(\alpha)\in
  (0, T_0)$ such that for $0<t<T(\alpha)$, the function
  \[
  {v_\COL}^+(r, t) = \min\left\{ 1, me^{3t/\alpha^2} + \left(
  \frac{r-\br}{\alpha}\right)^2 \right\}
  \]
  is a supersolution of $v_t=\cF[v]$, while
  \[
  {v_\COL}^-(r, t) = \max\left\{ 0, me^{-t/\alpha^2} - \left(
  \frac{r-\br}{\alpha}\right)^2 \right\}
  \]
  is a subsolution.
\end{lemma}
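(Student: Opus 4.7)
The plan is to verify the differential inequality $\partial_t v_\COL^\pm \gtrless \cF[v_\COL^\pm]$ on the smooth part of each graph, and then to handle the truncations by $0$ and $1$ by observing that both $v\equiv 0$ and $v\equiv 1$ are exact stationary solutions of $v_t = \cF[v]$. Consequently, the minimum of a supersolution with the supersolution $v\equiv 1$ is itself a supersolution (in the classical-with-concave-corners sense used already in the proofs of Lemmas~\ref{lem:Cristina} and~\ref{yamm}), and similarly the maximum of a subsolution with $v\equiv 0$ is a subsolution. So once the differential inequality is established on the open sets where the inner expressions lie strictly between $0$ and $1$, the full $v_\COL^\pm$ automatically inherits the required property.

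The essential computation rests on the cancellation identity
\begin{equation*}
  VV_{rr} - \tfrac12 V_r^2 \;=\; \pm\, \frac{2 f(t)}{\alpha^2}
  \qquad\text{for } V(r,t) = f(t) \pm \left(\tfrac{r-\br}{\alpha}\right)^2,
\end{equation*}
in which the $(r-\br)^2/\alpha^4$ contributions cancel exactly and the sign on the right matches the sign in front of the quadratic. Feeding this into $\cF[V]$ yields
\begin{equation*}
  V_t - \cF[V] \;=\; f'(t) \mp \frac{2 f(t)}{\alpha^2}
  \,-\, \frac{n-1-V}{r}\,V_r \,-\, \frac{2(n-1)V(1-V)}{r^2}.
\end{equation*}
For $V_\COL^+ = me^{3t/\alpha^2} + (r-\br)^2/\alpha^2$, the first two terms combine to $(3-2)f/\alpha^2 = me^{3t/\alpha^2}/\alpha^2 > 0$, giving the leading supersolution margin of order $m/\alpha^2$. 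For the subsolution the analogous reduction gives the opposite-signed leading term. The remaining drift and reaction terms are estimated using that $|r-\br|\leq\alpha$ on the support and $V(1-V)\leq 1/4$, so they contribute only $O(1/(\br\,\alpha))$ and $O(1/\br^2)$ respectively; for $\alpha$ small compared to $\br$ (and a matching small time $T(\alpha)$, chosen so that $e^{\pm 3t/\alpha^2}$ stays within a fixed window around $1$), the leading $m/\alpha^2$ term controls everything else.

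The step I expect to be the main obstacle is the handling of the lower-order reaction term $\tfrac{2(n-1)V(1-V)}{r^2}$, which enters with the \emph{wrong} sign for one of the two inequalities and is not small in $\alpha$. The saving fact is that $r$ is confined to the interval $[\br-\alpha,\br+\alpha]$ on the support of $V_\COL^\pm$, so this term is uniformly bounded in terms of $\br$ alone, whereas the leading term of order $m/\alpha^2$ grows without bound as $\alpha\searrow 0$. Once this quantitative balancing is carried out, the choice of $T(\alpha)$ is forced (it must keep the exponential factor in the desired range), and the corner/truncation argument sketched above completes the proof.
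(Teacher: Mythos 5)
Your overall strategy matches the paper's (verify the differential inequality on the smooth part, handle the truncations by $0$ and $1$ via the min/max corner convention), and your cancellation identity is correct and is in fact a cleaner way to organize the computation than the paper's proof. But there is a genuine sign error in your conclusion for the subsolution, and it points at a real problem with the argument.

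For $V = f(t) \pm ((r-\br)/\alpha)^2$ you correctly get
\[
V_t - \cF[V] = f'(t) \mp \frac{2f(t)}{\alpha^2}
- \frac{n-1-V}{r}V_r - \frac{2(n-1)}{r^2}V(1-V),
\]
where the $\mp$ tracks the $\pm$ in the definition of $V$. For the supersolution (sign $+$) the first two terms are $f' - 2f/\alpha^2 = (3-2)f/\alpha^2 = f/\alpha^2 > 0$, as you say. For the subsolution (sign $-$) the analogous combination is $f' + 2f/\alpha^2$, with $f = me^{-t/\alpha^2}$ and $f' = -f/\alpha^2$, which gives $(-1+2)f/\alpha^2 = f/\alpha^2 > 0$ -- the \emph{same} sign, not the opposite sign as you claim. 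Since this positive $O(1/\alpha^2)$ term dominates the $O(1/\alpha)$ drift and $O(1)$ reaction as $\alpha\searrow 0$, one gets $V_t - \cF[V] > 0$ on the smooth part of $v_\COL^-$, i.e.\ the stated $v_\COL^-$ is \emph{not} a subsolution. To actually obtain a subsolution one needs $m(t) = me^{-ct/\alpha^2}$ with $c > 2$ so that $f' + 2f/\alpha^2 = (2-c)f/\alpha^2 < 0$.

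For what it's worth, the paper's own proof also does not survive this check: its displayed identity for $\cF[G_-((r-\br)/\alpha)]$ should read $\alpha^{-2}[-2m + \cO(\alpha)]$ (your cancellation identity, with the minus sign), not $\alpha^{-2}[2m - 2((r-\br)/\alpha)^2 + \cO(\alpha)]$, and with the corrected sign the paper's criterion ``$m'(t) < cm(t)/\alpha^2$ for any $c<0$'' is insufficient; one needs $c < -2$. So the gap you left (declaring the subsolution ``analogous'' without carrying the $\mp$ through) is precisely the spot where the argument needs more care, and it hides an error in the statement of the lemma, not just in the proof.
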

\begin{proof}
  The statements are verified by direct computation.  For $G_\pm(z) = m\pm
  z^2$ one computes that
  \[
  \cF\left[ G_+\left( \frac{r-\br}{\alpha} \right) \right] = \alpha^{-2}
  \left[ 2m +\cO(\alpha) \right],
  \]
  and
  \[
  \cF\left[ G_-\left( \frac{r-\br}{\alpha} \right) \right] = \alpha^{-2}
  \left[ 2m-2\bigl(\frac{r-\br}{\alpha} \bigr)^2 +\cO(\alpha) \right],
  \]
  hold when $|r-\br|\leq \alpha$, and when $\alpha\leq \tfrac12 \br$.

  If we now let $m$ vary with time, we find that $m(t) - z^2$ is a
  subsolution for small $\alpha$ if $m'(t) < cm(t)/\alpha^2$ for any
  constant $c<0$: we choose $c=-1$ to get the subsolution in the Lemma.
  Similarly, $m(t)+z^2$ is a supersolution if $m'(t) > cm(t)/\alpha^2$ for
  any constant $c>2$; we choose $c=3$.
\end{proof}

In Theorem~\ref{pinch}, we saw that a smooth forward evolution (if one exists) must
lie between the sub- and super- solutions constructed in Section~\ref{sec:Informal}.
Now we show, using Lemma~\ref{lem:collar}, that the regularized solutions
$g_\omega(t)$ obey a similar bound, uniformly for small $\omega>0$.

\begin{lemma}
  Let $\varepsilon, \delta>0$ be given.  Then there exist $\br, \bt>0$ such
  that for each solution $g_\omega(t)$, with $\omega>0$ sufficiently small,
  one has
  \begin{equation}
    \label{eq:v-contained}
    v^-_{\varepsilon,\delta}(r, t+\omega)
    < v_\omega(r,t)
    < v^+_{\varepsilon,\delta}(r,t+\omega)
  \end{equation}
  for all $r\in(0,\br)$ and $t\in(0,\bt)$.
  \label{lem:v-contained}
\end{lemma}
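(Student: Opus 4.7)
The plan is to apply a comparison principle to $v_\omega(r,t)$ and the time-translated barriers $v^\pm_{\varepsilon,\delta}(r, t+\omega)$ on a spatial interval $[0,\br]$ for a short time $[0,\bt]$, closely paralleling the proof of Theorem~\ref{pinch}. The new ingredient—needed because $v_\omega$ is an honest smooth solution on the whole sphere rather than a hypothetical object—is Lemma~\ref{lem:collar}, which is used to control $v_\omega(\br, t)$ uniformly in the small parameter $\omega$.

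First I would verify the conditions on the parabolic boundary other than $r=\br$. At $t=0$, the sandwich \eqref{eq:reg-metric-sandwich} gives directly $v^-_{\varepsilon,\delta}(r,\omega)\le v_\omega(r,0)\le v^+_{\varepsilon,\delta}(r,\omega)$ on $(0,\br)$. At $r=0$, smoothness of $g_\omega(t)$ after the surgical regularization forces $v_\omega(0,t)=1$ for all $t\in[0,\bt]$ (as in Theorem~\ref{CompactResult}); on the other side, $\bry(0)=1$ together with $\cry(\sigma)=M\sigma^2+o(\sigma^2)$ yields $v^\pm_{\varepsilon,\delta}(0,t+\omega)=1$, so the inequalities hold (weakly) at $r=0$.

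Next I would pin $v_\omega(\br,t)$. Choose $\br$ so that $v_0(\br)=m$ for a fixed $0<m<1$. For $\omega$ sufficiently small, $\br>\rho_*\sqrt\omega$, so the regularized initial metric coincides with $g_0$ near $r=\br$, meaning $v_\omega(\cdot,0)$ agrees with $v_0$ on a neighborhood of $\br$. Applying Lemma~\ref{lem:collar} on the window $|r-\br|\le\alpha$ for $\alpha\ll\br$, the sub/super solutions ${v_\COL}^\pm$ dominate $v_\omega$ at $r=\br\pm\alpha$ (since ${v_\COL}^+\ge 1\ge v_\omega$ and ${v_\COL}^-\le 0\le v_\omega$ there) and sandwich $v_\omega(\cdot,0)$ at $t=0$. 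The standard maximum principle on this window then yields $v_\omega(\br,t)\in[me^{-t/\alpha^2},\,me^{3t/\alpha^2}]$ for $0<t\le T(\alpha)$. Since $v^\pm_{\varepsilon,\delta}(\br,t+\omega)\to(1\pm\delta)v_0(\br)=(1\pm\delta)m$ as $\omega,t\searrow0$, for sufficiently small $\omega$ and $\bt\ll\alpha^2$ we obtain the strict boundary inequality $v^-_{\varepsilon,\delta}(\br,t+\omega)<v_\omega(\br,t)<v^+_{\varepsilon,\delta}(\br,t+\omega)$ on $[0,\bt]$.

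With all four pieces of the parabolic boundary under control (strict at three sides and at $t=0$ strict in the interior, weak only at the corner $r=0$), I would invoke the comparison principle of Lemma~\ref{yamm}. Because $v_\omega$ is smooth on $[0,\br]\times[0,\bt]$, the hypothesis \eqref{eq:yamm-v-conditions} is automatic on this compact rectangle, and the $\omega$-independent bound on $a_\omega$ guaranteed by \cite[Corollary~3.2]{AK04} makes the relevant constants uniform. The weak inequality at $r=0$ is handled exactly as in the proof of Theorem~\ref{pinch}: compare $v_\omega(r,t+\kappa)$ against $v^\pm_{\varepsilon,\delta}(r,t+\omega)$ for small $\kappa>0$ and let $\kappa\searrow0$. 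The main obstacle is calibrating the collar parameters $m,\alpha$ together with $\bt$, $\delta$, $\varepsilon$ so that the pinch $[me^{-t/\alpha^2},me^{3t/\alpha^2}]$ fits strictly between $v^\pm_{\varepsilon,\delta}(\br,\cdot+\omega)$ uniformly in small $\omega>0$; this is what ultimately dictates the choices of $\br$ and $\bt$ asserted by the lemma.
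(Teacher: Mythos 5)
Your proposal takes essentially the same route as the paper: use the collar sub-/super-solutions of Lemma~\ref{lem:collar} to pin down $v_\omega$ near $r=\br$, then invoke the comparison principle Lemma~\ref{yamm} on $[0,\br]\times[0,\bt]$ with the time-shifted barriers. The paper's proof is terser but has the identical structure — choose $m_\pm$, $\alpha$ so that $v_\COL^\pm$ initially sandwich $\vini$ and are themselves sandwiched by $v^\pm_{\varepsilon,\delta}(\br,\cdot)$, then shrink $\bt$ so that this persists — and your elaboration (the $\kappa$-translation at $r=0$, the scaling of the collar width $\alpha$) fills in details that the paper omits or delegates to the proof of Theorem~\ref{pinch}. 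One small imprecision: the sandwich \eqref{eq:reg-metric-sandwich} is only imposed inside $\cN_\omega$ (where $\psi_0<\rho_*\sqrt\omega$); for $r$ outside $\cN_\omega$ but still in $(0,\br)$, the regularized metric agrees with $g_0$, so there $v_\omega(r,0)=\vini(r)$, and you need to invoke $\vini=(1+o(1))v_0$ together with $v^\pm_{\varepsilon,\delta}(r,\omega)\approx(1\pm\delta)v_0(r)$ to complete the $t=0$ comparison. This is a routine patch and does not affect the validity of your approach.
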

\begin{proof}
  To apply our comparison principle, Lemma~\ref{yamm}, we need to show
  that the given solutions $v_\omega(r,t)$ cannot cross the barriers
  $v^\pm_{\varepsilon,\delta}(r,t)$ at the right edge $r=\br$ of their
  domain.  The barriers from Lemma~\ref{lem:collar} allow us to do this.

  We let $\br$ be as in the construction of the sub- and super- solutions
  $v^\pm_{\varepsilon,\delta}$.  Then we choose $m_\pm$ and $\alpha>0$ so
  that the sub- and super- solution from Lemma \ref{lem:collar} satisfy
  \[
  v_\COL^-(m_-, \alpha; r, 0) < \vini(r) < v_\COL^+(m_+,\alpha; r,0)
  \]
  for all $r$, and also so that
  \[
  v^-_{\varepsilon,\delta}(\br, 0) < v_\COL^-(m_-, \alpha; \br, 0) \text{
  and } v^+_{\varepsilon,\delta}(\br, 0) > v_\COL^+(m_+,\alpha; \br, 0).
  \]
  Finally, we choose $\bt>0$ so small that this last inequality persists for
  $0<t<\bt$.
\end{proof}

\subsection{Uniform curvature bounds for $g_\omega(t)$ for $t>0$}
\label{sec:uniform-curvature-bounds}

\begin{lemma}\label{lem:Riemann-bounded}
  The curvature of the regularized solutions $g_\omega(t)$ of Ricci flow constructed in
  Section~\ref{sec:initialData} is bounded uniformly by
  \begin{equation}
    \label{eq:Riemann-bounded}
    |\Rm| \leq C \frac{-\log t} {t}
  \end{equation}
  for $0<t<t_*$, where the constant $C$ depends only on the initial
  metric $g_0$ and chosen sub- and super- solutions $v^\pm$.
\end{lemma}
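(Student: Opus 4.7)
For a rotationally symmetric metric $g=(dr)^2/v+r^2\gcan$ on $\cS^{n+1}$, controlling $|\Rm|$ reduces to bounding the two sectional curvatures $K=-v_r/(2r)$ and $L=(1-v)/r^2$. The plan is to derive both bounds from the sandwich of Lemma~\ref{lem:v-contained}, invoking the AK04 estimate $|a|=|(L-K)r^2|\leq\amax$ away from the pole and a parabolic-regularity argument in rescaled inner coordinates near the pole.

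First, I would bound $L=(1-v_\omega)/r^2\leq(1-v^-)/r^2$ in each of the three regions, from the explicit form of the subsolution $v^-$. In the outer region ($r\geq\rho_*\sqrt t$) and the parabolic region ($r\geq B\varepsilon^{-1/2}\sqrt{t/(-\log t)}$), the trivial bound $1-v^-\leq 1$ together with the lower bound on $r^2$ yields $L\leq C(-\log t)/t$. In the inner region I would use the Taylor expansion $\bry(s)=1+b_2 s^2+\cdots$ from Lemma~\ref{lem:BabyBryant} to obtain $1-v^-\leq C\sigma^2 = Cr^2(-\log t)/t$ for small $\sigma$, while for $\sigma$ bounded away from zero one again uses $r^2\geq c\theta^2=ct/(-\log t)$.

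Next, for $K$ I would combine $|K-L|\leq\amax/r^2$ (a consequence of $|a|\leq\amax$) with the $L$-bound. In the outer and parabolic regions, $1/r^2\leq C(-\log t)/t$, so $|K|\leq C(-\log t)/t$ follows immediately. The harder case is the inner region, where $r$ can be arbitrarily small. There I would rescale to the similarity variables $(\sigma,\theta)$ of Section~\ref{sec:formal-inner}, so that the sandwich lifts to $V_\IN^-\leq V_\omega\leq V_\IN^+$ on the fixed cylinder $\{0\leq\sigma\leq 3\sigma_*\}\times\{0<\theta\leq\theta_*\}$ with $V_\IN^\pm$ smooth and bounded below by $\bry(3k_-\sigma_*)>0$. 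The rescaled equation~\eqref{eq:ExactEquationInnerRegion} is then uniformly parabolic with bounded coefficients, so interior Schauder estimates (applied after a parabolic rescaling at each time to reduce to a unit-scale problem) promote the $C^0$ sandwich to a $C^{2,\alpha}$ bound on $V_\omega$ uniform in $\omega$. Smoothness of $g_\omega$ at the north pole forces $v_{\omega,r}(0,t)=0$, hence $V_{\omega,\sigma}(0,\theta)=0$; combined with the $C^2$ bound, this yields $|V_{\omega,\sigma}(\sigma,\theta)|\leq C\sigma$ and therefore $|K|=|V_{\omega,\sigma}|/(2\sigma\theta^2)\leq C(-\log t)/t$.

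The main obstacle is this last step: carrying out the Schauder argument uniformly in $\omega$ on the rescaled cylinder, and extracting the linear rate $V_{\omega,\sigma}=O(\sigma)$ near $\sigma=0$ from the vanishing of $V_{\omega,\sigma}$ at the pole. Finally, on the complement $\cK_{\br}$ of the singular neighborhood the regularized initial data coincide with $g_0$ for all small $\omega$, so standard short-time Ricci flow estimates yield a $t$-independent curvature bound there that is trivially absorbable into $C(-\log t)/t$.
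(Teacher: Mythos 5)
Your overall strategy coincides with the paper's: sandwich $v_\omega$ between the barriers of Lemma~\ref{lem:v-contained}, read off the $L$-bound from the subsolution, get the $K$-bound away from the pole via the scale-invariant quantity $a=r^2(K-L)$ and (M8), and finish near the pole with a rescaled parabolic estimate. Your $L$-argument and your $K$-argument in the outer/parabolic regions are correct; the paper packages the same computation more cleanly by splitting into $\Omega_1=\{r^2\leq\nu t/(-\log t)\}$ and its complement $\Omega_2$, where your outer/parabolic case becomes $\Omega_2$ and your inner case becomes $\Omega_1$.

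The inner-region step has a genuine gap. Equation~\eqref{eq:ExactEquationInnerRegion} is \emph{not} uniformly parabolic with bounded coefficients in the similarity variables. Solving it for the time derivative gives a diffusion coefficient $V/(\theta^2\theta_t)$; since $\theta^2\theta_t\sim \tfrac12\sqrt{t}\,(-\log t)^{-3/2}\to 0$ while $V$ stays bounded below, this coefficient blows up as $t\searrow 0$, so no uniform Schauder estimate on $V_\omega$ in $(\sigma,\theta)$ comes for free. The parenthetical ``after a parabolic rescaling at each time'' is the right instinct, but once you actually do a genuine parabolic rescaling centered at a point $(\br,\bt)$ you are no longer in $(\sigma,\theta)$ variables, and the statement about~\eqref{eq:ExactEquationInnerRegion} becomes a non sequitur. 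The paper carries out exactly this rescaling: set $w(x,y)=\bigl(1-v(\br+\br x,\bt+\br^2 y)\bigr)/\beta$ with $\beta=C\,\frac{-\log\bt}{\bt}\,\br^2$, observe from the barriers that $0\leq w\leq 1$ on a fixed unit rectangle, note that the rescaled equation~\eqref{eq:rescaled-pde-for-w} has uniformly bounded coefficients there (since $v=1-\beta w$ is bounded below and $1+x\geq\tfrac12$), and then a single interior gradient estimate yields $|w_x(0,0)|\leq C$, i.e.~$|v_r(\br,\bt)|\leq C\frac{-\log\bt}{\bt}\br$. This hits the exact scale you want with no appeal to $v_r(0,t)=0$.

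A second, related problem with your route is that the inference ``$V_{\omega,\sigma}(0,\theta)=0$ plus a $C^2$ bound implies $|V_{\omega,\sigma}|\leq C\sigma$'' requires the $C^2$ bound on the full interval $[0,\sigma]$, i.e.~\emph{up to} the coordinate singularity $\sigma=0$. Interior Schauder estimates give nothing there; to get estimates up to $\sigma=0$ you would need to pass to smooth coordinates around the north pole and rerun the argument, which is more work than the paper's normalization, and is not sketched in your proposal. So the approach can likely be salvaged, but as written the key inner-region estimate is not established.
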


\begin{proof}
  For some small enough $\varepsilon,\delta>0$, we consider the sub- and
  super- solutions $v^\pm_{\varepsilon,\delta}$ constructed in
  Section~\ref{sec:Informal}, which are defined in the ``singular region''
  $\psi\leq r_*$.  Here $r_*=r_*(\varepsilon,\delta)\ll r_\#$.  We now
  choose $m^\pm$ and $\alpha<\tfrac12 r_*$ so small that
  \[
  v_\COL^+(r, t) = m^+e^{3t/\alpha^2}+(r-r_*)^2/\alpha^2 >\vini(r)
  \]
  and
  \[
  v_\COL^-(r, t) = m^-e^{-t/\alpha^2}-(r-r_*)^2/\alpha^2 <\vini(r).
  \]
  In the collar-shaped region $|\psi - r_*|\leq \frac12\alpha$, the quantity
  $v$ is now a bounded solution of $v_t=\cF[v]$ which is bounded away
  from zero.  Interior estimates for quasilinear parabolic \textsc{pde} imply that
  all higher derivatives will be bounded in an narrower collar $|\psi -
  r_*|\leq \frac13\alpha$.  In the ``regular region'' which begins with
  $\psi\geq r_*$ and extends to the south pole $\SP\in\cS^{n+1}$, we can
  now apply the maximum principle to
  \begin{equation}\label{eq:Rm-norm-growth}
    \frac{\partial |\Rm|^2} {\partial t}
    \leq \Delta |\Rm|^2 + c_n |\Rm|^3
  \end{equation}
  to conclude that $|\Rm|$ remains bounded for a short time there.  Since all
  higher derivatives of $\Rm$ can be expressed in terms of $v$ and its
  derivatives, we find that these too are bounded in the collar, and by
  inductively applying the maximum principle to equations for
  $\partial_t|\nabla^\ell\Rm|^2 - \Delta|\nabla^\ell\Rm|^2 $, one finds that
  higher derivatives of $\Rm$ also remain bounded in the regular region.

  It remains to prove that \eqref{eq:Riemann-bounded} holds in the singular
  region, namely the region where $\psi\leq r_*$.  We do this in the next
  Lemma.
\end{proof}
\begin{lemma}
  \label{lem:vrr-bounded}
  Let $0<v^-(r, t) < v^+(r,t)$ be sub- and super- solutions of $v_t =
  \cF[v]$ which we have constructed on the domain $0<r<r_*$, $0<t<t_*$, and
  suppose that $v$ is a solution of this equation which satisfies
  $v^-<v<v^+$.  Then there is a constant $C$ such that
  \[
  \left|\frac{1-v}{r^2}\right| + \left|\frac{v_r}{r}\right| \leq C
  \frac{-\log t}{t} \quad \text{ for }0<r<r_*\text{ and }0<t<t_*.
  \]
  The constant $C$ depends only on the sub- and super- solutions $v^\pm$
  and the constant $A$ from (M8) in Table~\ref{tab:initialdata}.
\end{lemma}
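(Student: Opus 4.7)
The first term $(1-v)/r^2$ is handled directly by the sub- and super- solution bounds. Since $v^- \leq v \leq v^+$, it suffices to verify $(1-v^\pm)/r^2 \leq C(-\log t)/t$ in each of the three regions. In the outer and parabolic regions, $r \geq c\sqrt{t/(-\log t)}$, so $1/r^2 \leq C(-\log t)/t$, and $|1-v^\pm| \leq 1$ suffices. In the inner region, the expansion $\bry(\sigma) = 1 + b_2\sigma^2 + \cdots$ from Lemma~\ref{lem:BabyBryant} together with $\cry(\sigma) = O(\sigma^2)$ as $\sigma\searrow 0$ gives $1 - V_\IN^\pm = O(\sigma^2)$ uniformly for $\sigma \in [0, 3\sigma_*]$; since $r^2 = \sigma^2\theta^2$, this produces the desired $O(1/\theta^2) = O((-\log t)/t)$ bound.

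For $v_r/r$, hypothesis (M8) and \cite[Corollary~3.2]{AK04} give the preserved bound $|a| \leq \amax$, where in $r$-coordinates $a = (1-v) + r v_r/2$. Solving for $v_r/r$ yields
\[
\left|\frac{v_r}{r}\right| \leq \frac{2\amax}{r^2} + 2\left|\frac{1-v}{r^2}\right|.
\]
In the outer and parabolic regions, and in the portion of the inner region where $\sigma \geq c_0$ for any fixed $c_0 > 0$, one has $1/r^2 \leq C(-\log t)/t$; combined with the bound on $(1-v)/r^2$ just established, this finishes the estimate there.

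The genuine difficulty is the part of the inner region with $\sigma < c_0$, where $r$ may approach $0$ and the pointwise $a$-argument fails. My plan is to rescale and apply interior parabolic regularity. Setting $V(\sigma, t) = v(\sigma\theta(t), t)$, the equation rewrites as $\theta^2 V_t - \sigma\theta\theta_t V_\sigma = \cF_\IN[V]$; after the time change $ds = dt/\theta^2$ this becomes $V_s = \cF_\IN[V] + \sigma\theta\theta_t V_\sigma$, with the perturbation of order $\theta\theta_t = O(1/(-\log t)) \to 0$ as $s\to\infty$. Using SO$(n+1)$ symmetry, I would interpret $V$ as a smooth rotationally symmetric function $\hat V(x,s) = V(|x|, s)$ on a Euclidean ball in $\R^{n+1}$. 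The extended equation is quasilinear parabolic with smooth coefficients and uniformly parabolic, because the inner sub- and super- solutions give $\hat V \geq c_1 > 0$ uniformly. Standard interior parabolic regularity then yields a uniform $C^{2,\alpha}$ bound on $\hat V$ near the origin, uniform in $s$. This bound controls $V_\sigma/\sigma$ uniformly on $[0, c_0]$, and translating back gives $|v_r/r| = |V_\sigma|/(\sigma\theta^2) \leq C/\theta^2 = C(-\log t)/t$.

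The main obstacle is making the interior regularity estimate for $\hat V$ uniform on the entire infinite $s$-interval corresponding to $t \in (0, t_*)$. This reduces to checking that the coefficients of the rescaled Cartesian equation and their Hölder norms do not degenerate as $s\to\infty$, which follows from the uniform $C^0$ and positivity bounds on $V$ supplied by the sub- and super- solutions, together with the decay $\sigma\theta\theta_t \to 0$ of the perturbation term.
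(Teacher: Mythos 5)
Your handling of $(1-v)/r^2$, and of $v_r/r$ wherever $r^2 \gtrsim t/(-\log t)$ via the preserved bound $|a| = |(1-v)+\tfrac12 r v_r| \le \amax$, is correct and matches the paper; the paper organizes the split as $\Omega_1 = \{r^2 \le \nu t/(-\log t)\}$ versus $\Omega_2 = \{r^2 \ge \nu t/(-\log t)\}$ rather than by the asymptotic zones, but the content is the same.

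The gap is in your treatment near the axis. You assert that passing to Cartesian coordinates $\hat V(x,s) = V(|x|,s)$ on a ball in $\R^{n+1}$ produces a quasilinear parabolic equation ``with smooth coefficients,'' and this is false. Written out, $\cF_\IN$ contributes a second-order coefficient proportional to $V\,\frac{x^i x^j}{|x|^2}$ (from $VV_{\sigma\sigma}$), a drift coefficient proportional to $\frac{x^i}{|x|^2}$ (from $\frac{n-1-V}{\sigma}V_\sigma$), and a zeroth-order coefficient $\frac{1}{|x|^2}$ (from $\frac{2(n-1)}{\sigma^2}(V-V^2)$); none of these is even continuous at $x=0$, let alone H\"older, so Schauder interior estimates do not apply in the way you invoke them. (The \emph{solution} $\hat V$ is smooth at the pole, but the \emph{equation} in these variables is not a smooth quasilinear system there.) The difficulty you flag — uniformity over an infinite $s$-interval — is not actually the crux. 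The paper sidesteps both issues by a scale-invariant rescaling that keeps away from $r=0$: for each $(\br,\bt)\in\Omega_1$ it sets
\[
w(x,y) = \frac{1 - v(\br + \br x,\ \bt + \br^2 y)}{\beta}, \qquad \beta = C\,\frac{-\log\bt}{\bt}\,\br^2,
\]
so that on $|x|<\tfrac12$, $|y|<\tfrac12$ one has $r = \br(1+x)\in(\tfrac12\br,\tfrac32\br)$, and the dangerous $1/r$, $1/r^2$ coefficients become $1/(1+x)$, $1/(1+x)^2$, which are smooth and bounded. Since $1-\beta w = v \ge v^-$ is bounded below, the rescaled equation is uniformly parabolic, and the choice of $\beta$ gives $0\le w\le 1$; interior estimates then yield $|w_x(0,0)|\le C$ uniformly in $(\br,\bt)$, and undoing the scaling gives $|v_r(\br,\bt)| \le C\frac{-\log\bt}{\bt}\br$, i.e.\ the claimed bound on $|v_r/r|$. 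To repair your argument you would need either this scale-invariant rescaling around interior points, or to pass to a genuinely tensorial formulation (e.g.\ Ricci--DeTurck in a coordinate chart covering the pole) in which the coefficients are honestly smooth.
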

This Lemma implies that the sectional curvatures $L=(1-v)/r^2$ and
$K=-v_r/2r$ are uniformly bounded by $C(-\log t)/t$ for all solutions
caught between our sub- and super- solutions.  This completes the proof
of Lemma~\ref{lem:Riemann-bounded} above.

\medskip

Note that the Lemma predicts that $\sup |\Rc| \sim C(-\log t)/t$, which is
not integrable in time.  This is consistent with the fact that our initial
metric $g_0$ is not comparable to the standard metric on $S^{n+1}$. To wit,
there is no constant $c>0$ such that $cg_{\cS^{n+1}}\leq g_0\leq
\frac{1}{c}g_{\cS^{n+1}}$.

\begin{proof}
  The supersolution satisfies $v^+\leq 1$, so we immediately get $v\leq 1$.
  The subsolution is bounded from below by
  \[
  v^- \geq 1-C\sigma^2 = 1- C \bigl(\frac{-\log t}{t}\bigr)^2 r^2
  \]
  for some constant $C$.  This implies that
  \[
  \frac{1-v}{r^2} \leq \frac{1-v^-}{r^2} \leq C\frac{-\log t}{t}.
  \]
  To estimate the second derivative, we choose a small constant $\nu>0$ and
  split the rectangle $\Omega = (0, r_*) \times (0,t_*)$ into two pieces,
  \[
  \Omega_1 = \left\{ (r, t)\in\Omega : r^2 \leq \nu \frac{t}{-\log t}
  \right\} \text{ and } \Omega_2 = \left\{ (r, t)\in\Omega : r^2 \geq \nu
  \frac{t}{-\log t} \right\}.
  \]
  Throughout the whole region $\Omega$, the quantity $a = r^2(K-L)$ is
  bounded by (M8), Table~\ref{tab:initialdata}, so that we get
  \[
  \left|\frac{v_r}{2r}\right| = |K| \leq |L|+|a/r^2| \leq C\frac{-\log
  t}{t}+ \frac{A}{r^2}.
  \]
  In the region $\Omega_2$, the second term is dominated by the first, and we
  therefore find that $|v_r/r|\leq C(-\log t)/t$ on $\Omega_2$.

  In the first region $\Omega_1$, the equation $v_t=\cF[v]$ is uniformly
  parabolic, so that standard interior estimates for parabolic equations
  \cite[\S V.3, Theorem 3.1]{LSU67} provide an $(r, t)$-dependent bound for
  $v_r$.  The following scaling argument shows that this leads to the
  stated upper bound for $|v_r/r|$.

  Let $(\br, \bt) \in \Omega_1$ be given, and consider the rescaled
  function
  \[
  w(x, y) \doteqdot \frac{1-v(\br+\br x, \bt + \br^2 y)}{\beta}.
  \]
  Then $w$ satisfies
  \begin{equation}\label{eq:rescaled-pde-for-w}
    \frac{\partial w}{\partial y}
    =
    (1-\beta w)w_{xx}
    +\tfrac12 \beta w_x^2
    + \frac{n-2+\beta w}{1+x} w_x
    +  \frac{2(n-1)}{(1 + x)^2}  (1-\beta w) w.
  \end{equation}
  In the rectangle $-\tfrac12 < x < \tfrac12$, $-\tfrac12 < y < 0$, we have
  \[
  r = \br(1+x) \in \left( \tfrac12\br, \tfrac32\br \right),
  \]
  and
  \[
  \bt > t = \bt + \br^2 y > \bt - \tfrac12\br^2 \geq \left(
  1-\frac{C}{-\log t} \right)\bt >\tfrac12 \bt
  \]
  if $t_*$ is small enough.

  Hence $w$ will be bounded by
  \[
  0\leq w \leq \frac{C}{\beta} \frac{-\log \bt}{\bt} \br^2.
  \]
  We now choose $\beta = C \frac{-\log \bt}{\bt}\br^2$, which always
  satisfies $\beta\leq C\nu$, since $(\br, \bt)\in\Omega_1$.

  Our function $w$ is a solution of \eqref{eq:rescaled-pde-for-w} in which
  $1-\beta w = v\geq v^-$ is uniformly bounded from below, $1+x\geq
  \frac{1}{2}$ is bounded from below, and for which we have found $0\leq
  w\leq 1$.  Standard interior estimates now imply that $|w_x(0,0)| \leq C$
  for some universal constant.  After scaling back, we then find that
  \[
  |v_r(\br, \bt)| = \left|-\frac{\beta}{\br}w_x(0,0)\right| \leq
  C\frac{-\log\bt}{\bt}\br,
  \]
  as claimed.

\end{proof}

\subsection{Uniform curvature bounds at all times, away from the
singularity}
\label{sec:unif-cbounds-away-from-singularity}

The initial metric is smooth away from the singularity, and one expects
this regularity to persist for short time.  Here we prove this.

For $r>0$, we let $\cK_r$ be the complement in $\cS^{n+1}$ of the
neighborhood of the North Pole in which $\psi_\init<r$.
\begin{lemma}
  For any $r_1>0$ and $t_1>0$, there exists $C_1$ such that the
  Riemann tensor of any solution $g_\omega(t)$ is bounded by $|\Rm(x,
  t)|\leq C_1$ for all
  \[
  (x, t) \in (\cS^{n+1}\times[t_1, t_*]) \cup (\cK_{r_1}\times[0, t_*]).
\]
  Furthermore, all covariant derivatives of the curvature are
  uniformly bounded: for any $k<\infty$ there is $C_k$ such
  that $|\nabla^k\Rm| \leq C_k$ on $(\cS^{n+1}\times[t_1, t_*]) \cup
  (\cK_{r_1}\times[0, t_*])$ for all $\omega>0$.
\end{lemma}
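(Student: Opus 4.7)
The plan is to establish the two bounds separately on the two sets comprising the domain. For the time-truncated slab $\cS^{n+1}\times[t_1,t_*]$, the bound $|\Rm|\leq C(-\log t)/t$ supplied by Lemma~\ref{lem:Riemann-bounded} immediately gives $|\Rm|\leq C_1$ with $C_1 = C(-\log t_1)/t_1$. Applying Shi's derivative estimates on the slightly larger slab $[t_1/2,t_*]$ --- equivalently, inductively invoking the maximum principle on the evolution inequalities $\partial_t|\nabla^\ell\Rm|^2 - \Delta|\nabla^\ell\Rm|^2 \leq c_n\sum_{j}|\nabla^j\Rm|\,|\nabla^{\ell-j}\Rm|\,|\nabla^\ell\Rm|$ --- then produces uniform bounds on each $|\nabla^k\Rm|$ throughout $\cS^{n+1}\times[t_1,t_*]$.

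For the spatial region $\cK_{r_1}\times[0,t_*]$ the argument is more delicate because we must control the curvature down to $t=0$. We first fix an auxiliary radius $r_0\in(0,r_1)$. For all sufficiently small $\omega>0$, the regularization of Section~\ref{sec:initialData} leaves the metric untouched on $\cK_{r_0}$, so $g_\omega(0)=g_0$ there; consequently $|\Rm|$ and all its covariant derivatives are bounded on $\cK_{r_0}$ at $t=0$ by constants depending only on $g_0$ and $r_0$. The bound~\eqref{eq:ddt-psi-squared-bound} implies $|\psi(\cdot,t)^2-\psi(\cdot,0)^2|\leq 2(\amax+n)t$, so after shrinking $t_*$ we may arrange that $\cK_{r_1}\subset\{\psi(\cdot,t)\geq r_0\}$ throughout $[0,t_*]$.

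Next, on the set $\{r\geq r_0\}$ the barriers of Section~\ref{sec:Informal} --- or rather their outer-layer component, which at $r\geq r_0$ is bounded away from both $0$ and $1$ --- trap $v_\omega$ between two positive constants depending only on $r_0$. In particular the quasilinear PDE $v_t=\cF[v]$ is uniformly parabolic on $\{r\geq r_0\}$ with coefficients bounded in terms of $r_0^{-1}$. Combined with the uniform initial-data bounds, standard interior estimates for quasilinear parabolic equations~\cite{LSU67} yield uniform-in-$\omega$ bounds on every derivative of $v_\omega$ on any slightly smaller set, in particular on $\cK_{r_1}$. Since $|\Rm|$ and $|\nabla^k\Rm|$ are polynomial expressions in $v$, $v_r$, $v_{rr},\ldots$ and $1/r$, this gives the desired curvature bounds.

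The main obstacle is ensuring that every estimate above is uniform in $\omega$, and this is precisely what the construction of Section~\ref{sec:initialData} arranges: the modification $g_\omega\neq g_0$ is confined to the tiny cap $\cN_\omega\subset\cS^{n+1}\setminus\cK_{r_0}$ once $\rho_*\sqrt\omega<r_0$, so the initial-data bounds on $\cK_{r_0}$ depend only on $g_0$; and the barriers are built independently of $\omega$. Consequently all the interior parabolic estimates applied above are $\omega$-uniform, which is exactly the input needed in Section~\ref{sec:Compactness} to extract a convergent subsequence.
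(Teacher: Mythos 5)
Your treatment of the slab $\cS^{n+1}\times[t_1,t_*]$ and your observation that $g_\omega(0)=g_0$ on $\cK_{r_0}$ (hence initial curvature bounds there) are both correct and match the paper. The gap is in how you handle $\cK_{r_1}\times[0,t_1]$.

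You treat the region $\{\psi\geq r_0\}$ as if it were globally described by the scalar PDE $v_t=\cF[v]$ with $r=\psi$ as a coordinate, and then invoke interior parabolic estimates ``on any slightly smaller set, in particular on $\cK_{r_1}$.'' But the $v$-coordinate is only defined where $s\mapsto\psi$ is monotone, i.e.\ on the polar collar where $\psi<r_\#$ (see the definition of $u$ in \eqref{eq:u-defined}). It does not cover the bulk of $\cK_{r_1}$: at bumps $\psi_s=0$, so $v=0$ and the change of variables degenerates; near the south pole $\psi$ decreases again; and $r=\psi$ is not injective on $\cK_{r_1}$. Relatedly, the outer barriers of Section~\ref{sec:Informal} are only constructed and verified on $0<r\leq r_*\ll r_\#$, so they cannot be cited to ``trap $v_\omega$ between two positive constants on $\{r\geq r_0\}$.'' Thus the parabolic-regularity step, as written, only produces curvature bounds on a thin collar $\tfrac12 r_1\lesssim\psi\lesssim\tfrac32 r_1$, not on all of $\cK_{r_1}$.

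What is missing is the propagation of the bound from that collar into the interior of $\cK_{r_1}$, where no scalar reduction is available. The paper supplies this with a second, geometric step: once $|\Rm|$ is bounded on $\partial\cK_{r_1}\times[0,t_1]$ (from the $v$-equation estimates on the collar) and on $\cK_{r_1}\times\{0\}$ (from the untouched initial data), the maximum principle applied to $\partial_t|\Rm|^2\leq\Delta|\Rm|^2+c_n|\Rm|^3$ from \eqref{eq:Rm-norm-growth} propagates the $C^0$ bound to all of $\cK_{r_1}\times[0,t_1]$; the higher-derivative bounds then follow from a modified Shi local estimate that uses the initial-data bounds (\cite[Theorem~14.16]{YayAnotherBook}). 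Your argument needs this maximum-principle step; without it, the curvature on the bulk of $\cK_{r_1}$ (including the neighborhood of the south pole and any bumps) is uncontrolled for $0<t<t_1$.
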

\begin{proof}
  The curvature bound for $t\geq t_1$ was proved in
  Section~\ref{sec:uniform-curvature-bounds}. The derivative bounds
  for $t\ge t_1$ then follow from Shi's global estimates by taking
  $g(t_{1}/2)$ as the initial metric \cite{Shi89}.

  Now we estimate $\Rm$ in $\cK_{r_1}$ for $0<t<t_1$,
  for sufficiently small $t_1$. On $\partial\cK_{r_1}$ we have
  $\psi_\init = r_1$, and thus, for small enough $\omega>0$,
  $\psi_\omega(x, 0) = \psi_\init =r_1$.  By
  \eqref{eq:ddt-psi-squared-bound}, we therefore have
  \[
  \tfrac12 r_1^2 \leq \psi_\omega^2 \leq \tfrac32 r_1^2\quad \text{ for }\quad 0\leq
  t\leq \frac{r_1^2}{4(\amax+n)}.
  \]
  We choose $t_1< r_1^2/4(\amax+n)$ and estimate $|\Rm|$ on
  $\partial\cK_{r_1}$ using the $v = \psi_s^2$ function.  For any $r_1>0$,
  the functions $v_\omega(r,t)$ corresponding to the metrics $g_\omega(t)$
  will be bounded away from zero on the interval $\tfrac12 r_1 \leq r \leq
  \tfrac32 r_1$ and for $0\leq t\leq t_1$, if $t_1$ is small enough.  Being
  solutions of the nondegenerate quasilinear parabolic equation
  $v_t=\cF[v]$, all derivatives of the $v_\omega$ are uniformly bounded in
  the smaller region $\sqrt{1/2}r_1 < r < \sqrt{3/2}r_1$. Hence $K$, $L$
  and $|\Rm|$ are bounded on $\partial\cK_{r_1}$ for $0\leq t \leq t_1$.
  Applying the maximum principle to equation \eqref{eq:Rm-norm-growth},
  and possibly reducing $t_1$ one last time, we find that $|\Rm|$ is indeed
  bounded on $\cK_{r_1} \times [0,t_1]$.

  Observe that all derivatives $\nabla^k\Rm$ are initially bounded, and
  also that they are bounded in the collar region, because they can be
  expressed there in terms of   $v_\omega(r, t)$ and its derivatives.
  Thus the derivative estimates follow from a modification of Shi's local
  estimates, which allow one to take advantage of bounds on the curvature
  of the initial metric \cite[Theorem~14.16] {YayAnotherBook}.
\end{proof}

\subsection{Constructing the solution}
\label{sec:constructing-the-solution}

Now we show that a subsequence of our regularized solutions converges to the
solution described in our main theorem, i.e.~to a
smooth forward evolution from a singular initial metric $g_0$. We do this in
two steps. We first demonstrate subsequential convergence to a limit solution that
has all the properties we want, except possibly at the north pole $\NP$. Then we
find coordinates in which this limit is smooth at $\NP$ for all $t>0$ that it
exists. Together, these results complete the proof of Theorem~\ref{thm:main}.

\begin{lemma}
  Any sequence $\omega_i\searrow0$ has a subsequence
  $\omega_{i_k}$ for which the solutions $g_{\omega_{i_k}}(t)$
  converge in $C^\infty_{\rm loc}$
  on $\bigl(\cS^{n+1} \setminus \{\NP\}\bigr) \times [0, t_*]$.
  \label{lem:convergence-away-from-NP}
\end{lemma}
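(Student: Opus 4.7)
\medskip

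\noindent\textbf{Proof proposal.}
The plan is to apply a standard Arzel\`a--Ascoli/diagonal-subsequence argument, using the uniform $C^k$ bounds on the regularized solutions away from $\NP$ that have just been established. Since we work with rotationally symmetric metrics on $\cS^{n+1}$, I would parametrize the metric by its profile function. Concretely, fix the south pole $\SP$ as a reference point (it lies in the regular region for small time) and use arc length $s$ measured from $\SP$ as a gauge-breaking coordinate, so that
\[
  g_\omega(t)=(ds)^2+\psi_\omega(s,t)^2\gcan
\]
on the open dense set away from the poles. Reducing to the scalar function $\psi_\omega$ avoids the diffeomorphism ambiguity which is the reason Hamilton's compactness theorem only produces a pullback limit.

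Next, I would exhaust $\cS^{n+1}\setminus\{\NP\}$ by the precompact sets $\cK_{r_j}$ (with $r_j\searrow 0$) from Section~\ref{sec:unif-cbounds-away-from-singularity}. For each fixed $j$, the previous lemma gives $|\nabla^k\Rm|\le C_{j,k}$ uniformly in $\omega$ on $\cK_{r_j}\times[0,t_*]$. I would translate these intrinsic bounds into uniform pointwise bounds on $\psi_\omega$ and its $s$- and $t$-derivatives of every order. Since $\psi_\omega$ is bounded away from $0$ on $\cK_{r_j}$ (by the quantitative lower bound $\psi_\omega(\,\cdot\,,t)\ge\tfrac12 r_j$ valid for short time, already used in the previous section), the sectional curvatures
\[
  K_\omega=-\frac{\psi_{\omega,ss}}{\psi_\omega},\qquad
  L_\omega=\frac{1-\psi_{\omega,s}^{\,2}}{\psi_\omega^{\,2}}
\]
together with the $\psi_\omega$ lower bound yield $C^2$-bounds on $\psi_\omega$ in $s$; iterated use of $|\nabla^k\Rm|\le C_{j,k}$ together with the evolution equation (\ref{eq:RF-psi}) for $\psi_\omega$ then upgrades these to uniform $C^\ell$-bounds in $(s,t)$ for every $\ell$.

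Once these uniform $C^\ell$ bounds on $\psi_\omega$ on $\cK_{r_j}\times[0,t_*]$ are in place, Arzel\`a--Ascoli produces, for each $j$, a subsequence converging in $C^\ell$ for every $\ell$; a standard diagonal extraction across $j\to\infty$ and $\ell\to\infty$ yields a single subsequence $\omega_{i_k}\searrow0$ for which $\psi_{\omega_{i_k}}\to\psi_*$ in $C^\infty_{\mathrm{loc}}$ on $(\cS^{n+1}\setminus\{\NP\})\times[0,t_*]$. The limit metric $g_*(t)=(ds)^2+\psi_*(s,t)^2\gcan$ then satisfies Ricci flow where it is smooth, and inherits the initial datum on every open set on which $g_0$ is regular.

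The main technical nuisance, rather than a deep obstacle, is the translation step: converting uniform $|\nabla^k\Rm|$ bounds into uniform $C^k$ bounds on $\psi_\omega$ in the specific chart $(s,t)$ fixed above. This could be handled either by direct computation using the rotational symmetry (expressing $\nabla^k\Rm$ as a polynomial in $\psi_\omega$, $\psi_{\omega,s},\ldots$ with coefficients that are bounded once $\psi_\omega$ is bounded away from zero), or by bootstrapping interior parabolic Schauder estimates applied to the quasilinear equation~(\ref{eq:RF-psi}) on the nondegenerate region $\cK_{r_j}$. Either route is straightforward given the bounds already secured in Section~\ref{sec:unif-cbounds-away-from-singularity}.
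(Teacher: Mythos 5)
Your overall scheme — uniform $C^k$-bounds away from $\NP$, then Arzel\`a--Ascoli, then a diagonal argument over $r_j\searrow0$ — is the same as the paper's, which handles the first two steps abstractly by citing the local compactness lemma for tensors (Lemma~2.4 of \cite{HamCom}, or Lemma~3.11 of \cite{YetAnotherBook}). Your idea to make the argument concrete by passing to the rotationally symmetric profile function is a legitimate alternative. However, there is a genuine gap in the step that converts curvature bounds into bounds on $\psi_\omega$. You assert that $\psi_\omega$ is bounded away from zero on $\cK_{r_j}$, citing the quantitative bound $\psi_\omega\ge r_j/\sqrt2$ from Section~\ref{sec:unif-cbounds-away-from-singularity}. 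But that bound is established only on $\partial\cK_{r_j}$; the set $\cK_{r_j}$ is the \emph{complement} of the small collar around $\NP$, so it contains the south pole $\SP$, where $\psi_\omega(\SP,t)=0$ for every $\omega$ and every $t$. Thus $\psi_\omega$ is certainly not bounded below by a positive constant on $\cK_{r_j}$, and the formulas $K=-\psi_{ss}/\psi$, $L=(1-\psi_s^2)/\psi^2$ do \emph{not} directly upgrade curvature bounds into $\psi$-derivative bounds in a neighborhood of $\SP$; the division by $\psi$ degenerates exactly there.

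This can be repaired without changing the strategy: near $\SP$ the metrics $g_\omega(t)$ are smooth with uniformly bounded $|\nabla^k\Rm|$ and, being equal to $g_0$ at $t=0$ (on $\cK_{r_j}$) and evolving with uniformly bounded Ricci tensor, they are uniformly comparable to a fixed round background metric; working in Cartesian coordinates centered at $\SP$ gives the needed uniform $C^\ell$-bounds on the metric coefficients there, while your profile-function argument applies on the annular region where $\psi_\omega$ is bounded away from zero. (Equivalently, this is precisely what the cited abstract compactness lemma supplies in one stroke.) A second, lesser point: the arc-length coordinate $s$ you fix depends on $g_\omega(t)$ and hence on $\omega$ and $t$, so it is not a fixed chart on $\cS^{n+1}$; to get convergence of metrics on the fixed manifold you must either also control the reparametrizations (equivalently $\varphi_\omega$ in the fixed $x$-coordinate and its derivatives) or apply the compactness lemma in a fixed background chart as the paper does.
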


\begin{proof}
  Let $\omega_i\searrow0$ be any given sequence.  For arbitrary $r_1>0$, we
  have just shown that the metrics $g_{\omega_i}(t)$ have uniformly bounded
  Riemann curvature tensors on $\cK_{r_1/2} \times [0, t_*]$.  Hence there
  exists a subsequence $\omega_{i_k}$ such that the solutions
  $g_{\omega_{i_k}}(\cdot)$ converge in $C^\infty$ on $\cK_{r_1} \times [0, t_*]$.
  For proofs, see Lemma~2.4 and Hamilton's accompanying arguments in \cite{HamCom},
  or else Lemma 3.11 and the subsequent discussion in \cite[\S3.2.1] {YetAnotherBook}.

  A diagonalization argument then provides a further subsequence which
  converges in $C^{\infty}_{\rm loc}$ on
  $\bigl(\cS^{n+1} \setminus \{\NP\}\bigr) \times [0, t_*]$.
\end{proof}

To complete the construction, we show in the next Lemma that the apparent
  singularity at the north pole is removable.

\begin{lemma}
  Let $g_*(t)$ be the limit of any convergent sequence
  $g_{\omega_i}(t)$.  Then there is a homeomorphism
  $\Phi:\cS^{n+1} \to \cS^{n+1}$ for which the metrics $\tg(t)
  \doteqdot \Phi^*[g_*(t)]$ extend to a smooth metric on
  $\cS^{n+1}$ for all $t>0$ that they exist.

  The homeomorphism is smooth except at the north pole.
  \label{lem:singularity-off}
\end{lemma}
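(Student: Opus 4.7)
\medskip
\noindent\textbf{Proof proposal.} The strategy is to apply Hamilton's pointed compactness theorem to the regularized flows $g_{\omega_i}$ with base point $\NP$, producing a \emph{second} limit $\tg(t)$ that is smooth on all of $\cS^{n+1}$ for $t>0$, and then to identify this limit with a pullback of $g_*(t)$ under a homeomorphism $\Phi$ which is smooth away from $\NP$. The diffeomorphisms realizing the pointed convergence will, on the punctured sphere, converge to $\Phi$.

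\medskip
\noindent\emph{Uniform estimates near $\NP$.} Fix any $t_0\in(0,t_*)$. For $t\in[t_0,t_*]$, Lemma~\ref{lem:Riemann-bounded} gives a bound $|\Rm(g_{\omega_i}(t))|\le C(n,t_0)$, uniform in $\omega_i$; Shi's local estimates then upgrade this to uniform bounds on $|\nabla^k\Rm|$ on any slightly smaller time interval, for every $k$. To obtain a uniform positive lower bound on $\mathrm{inj}(\NP,g_{\omega_i}(t_0))$, I would use the barrier sandwich \eqref{eq:v-contained} together with the Bryant-model description of the inner region (Theorem~\ref{InnerResult} and Lemma~\ref{thm:can-glue-too}): after the rescaling $\sigma=\sqrt{-\tau/t}\,r$, each cap neighborhood of $\NP$ in $g_{\omega_i}(t_0)$ is sandwiched between two perturbations of a Bryant soliton at its tip. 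Since the Bryant soliton has positive injectivity radius at its tip, this yields $\mathrm{inj}(\NP,g_{\omega_i}(t_0))\ge c(t_0)>0$ uniformly in $\omega_i$.

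\medskip
\noindent\emph{Pointed compactness and extraction of $\Phi$.} By Hamilton's compactness theorem \cite{HamCom}, after passing to a further subsequence there exist diffeomorphisms $\eta_i:\cS^{n+1}\to\cS^{n+1}$ fixing $\NP$ such that $\eta_i^*g_{\omega_i}(t)\to\tg(t)$ in $C^\infty_{\rm loc}$ on $\cS^{n+1}\times[t_0,t_*]$, with $\tg$ a smooth Ricci flow on all of $\cS^{n+1}$. On the punctured sphere we already have $g_{\omega_i}(t)\to g_*(t)$ from Lemma~\ref{lem:convergence-away-from-NP}. Combining the two convergences and using the nondegeneracy of $g_*(t)$ on compact subsets of $\cS^{n+1}\setminus\{\NP\}$ forces the maps $\eta_i$ themselves to converge in $C^\infty_{\rm loc}$ on the punctured sphere to a smooth diffeomorphism $\Phi$ satisfying $\Phi^*g_*(t)=\tg(t)$ there. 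Setting $\Phi(\NP)=\NP$ extends $\Phi$ continuously, since the uniform curvature and injectivity radius bounds force the $\eta_i$ to preserve a shrinking neighborhood basis of $\NP$; this gives a homeomorphism of $\cS^{n+1}$ smooth off $\NP$, and $\Phi^*g_*(t)=\tg(t)$ extends smoothly across $\NP$ by construction. Finally, a diagonal argument in $t_0\searrow 0$, together with the essential uniqueness of the limit in Hamilton's theorem modulo time-independent isometries (pinned down here by the rotational symmetry), produces a single $\Phi$ valid for all $t\in(0,t_*)$.

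\medskip
\noindent\emph{Main obstacle.} The principal difficulty is the injectivity-radius lower bound at $\NP$: the curvature bound $|\Rm|\le C(-\log t)/t$ is only meaningful for $t$ bounded away from $0$, and by itself it does not rule out collapsing of the cap. The argument genuinely depends on the detailed Bryant-soliton profile obtained in Theorem~\ref{thm:main} to exhibit the cap as a near-Bryant region of controlled geometry. A secondary technical point is showing that the homeomorphism $\Phi$ extracted at different starting times $t_0$ can be chosen consistently; this should follow from the symmetry of the problem together with the fact that $\tg(t)$, being a smooth rotationally symmetric Ricci flow on $\cS^{n+1}$ with pinned base point $\NP$, is uniquely determined by $g_*$ up to an element of $O(n+1)$ fixing $\NP$, which can be absorbed into the $\eta_i$.
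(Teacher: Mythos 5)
Your proposal follows a genuinely different route from the paper's, and in fact the paper explicitly \emph{declines} to use Hamilton's compactness theorem here: the opening of Section~\ref{sec:Compactness} states ``We do this directly, rather than by invoking Hamilton's compactness theorem\ldots.'' The paper's actual argument is more elementary and constructive. It fixes a single time $t_1\in(0,t_*)$ and \emph{defines} $\Phi$ to be the change of coordinates that puts $\tg(t_1)=\Phi^*[g_*(t_1)]$ into arc-length polar form $(dx)^2+\tpsi(x,t_1)^2\gcan$ about $\NP$; since $v(r,t_1)$ is smooth with $v(0,t_1)=1$, this makes $\tg(t_1)$ manifestly smooth across $\NP$. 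It then propagates smoothness from $t_1$ to all other positive times \emph{in this fixed gauge}: writing $\nabla$ for the Levi-Civita connection of $\tg(t)$ and $\bar\nabla$ for that of $\tg(t_1)$, the difference tensor $A=\nabla-\bar\nabla$ obeys $\partial_t A^i_{jk}=-g^{i\ell}(\nabla_j R_{\ell k}+\nabla_k R_{j\ell}-\nabla_\ell R_{jk})$. The uniform bounds on $|\nabla^k\Rm|$ for $t\ge\varepsilon$ (from Sections~\ref{sec:uniform-curvature-bounds}--\ref{sec:unif-cbounds-away-from-singularity}) then give $|\bar\nabla^k A|\le C(t)|t-t_1|$ for all $k$, whence $|\bar\nabla^k\tg(t)|$ is bounded for all $k$; these are exactly the estimates needed to extend $\tg(t)$ smoothly across $\NP$. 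No compactness theorem, no injectivity-radius lower bound, and no extraction of $\Phi$ as a limit of diffeomorphisms is required.

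Your approach can probably be made rigorous, but the two points you flag as the ``main obstacle'' and the ``secondary technical point'' are genuine gaps, and they are plausibly the reason the authors avoid this route. First, the convergence of the diffeomorphisms $\eta_i$ on the punctured sphere does not follow merely from $g_{\omega_i}\to g_*$ and $\eta_i^*g_{\omega_i}\to\tg$; one must normalize the $\eta_i$ (for example, make them $\mathrm{SO}(n+1)$-equivariant and determined by matching arc-length from $\NP$) before convergence is forced, and that normalization has to be stated and verified. Second, the consistency of $\Phi$ as $t_0\searrow 0$ is more delicate than ``absorb an $O(n+1)$ isometry'': different starting times require passing to different subsequences, and a priori the limits $\tg_{t_0}$ on overlapping time intervals need only agree up to time-independent isometry, so an extra argument is needed to produce a \emph{single} $\Phi$. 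The paper's gauge-fixing-at-$t_1$ plus connection-difference estimate makes both issues disappear; if you want to retain your structure, I would still recommend borrowing the paper's step (ii) verbatim to replace your diagonal argument, i.e.\ show directly that $\bar\nabla^k\,\Phi^*g_*(t)$ is bounded for all $t\in(0,t_*]$ once $\Phi$ has been fixed at one time.
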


Because $g_*(t)$ satisfies Ricci flow, the modified family of metrics
$\tg(t)$ also satisfies Ricci flow on the punctured sphere.  But since
the modified metrics extend to smooth metrics on the whole sphere, they
constitute a solution of Ricci flow that is defined everywhere on
$\cS^{n+1} \times [0, t_*]$, except at the north pole $\NP$ at time $t=0$.
Its initial value is $\Phi^*[g_0]$, so it is the solution we seek.

\begin{proof}
  Choose any time $t_1\in (0, t_*)$, and let the homeomorphism
  $\Phi$ be such that
  \[
  \tg(t_1)  = (dx)^2 + \tpsi(x, t_1)^2\gcan.
  \]
  Then, since $v(r, t_1)$ is a smooth function, one finds that
  $\tg(t_1)$ extends to a smooth metric at the north pole.

  We write $\nabla$ for the Levi-Civita connection of $\tg(t)$ at any given time
  $t\in (0, t_*)$, and $\bar\nabla$ for the connection at the fixed time $t_1$.

  We will show that the metrics $\tg(t)$ extend smoothly across the
  north pole by finding uniform bounds for the derivatives
  $\bar\nabla^k\tg(t)$.  These bounds imply that for any pair of
  smooth vector fields $V, W$ defined near the north pole, the
  function $\tg(t)(V, W)$ and its derivatives are uniformly bounded
  near the north pole, so that $\tg(t)(V, W)$ extends smoothly.
  The metric $\tg(t)$ therefore also extends smoothly.

  To estimate the derivatives $\bar\nabla^k\tg(t)$, we start with the
  identity $\nabla^k\tg(t) \equiv 0$ and then estimate the difference
  between the connections $\bar\nabla$ and $\nabla$.  This difference
  is, as always, a tensor field, so that we may write
  \[
  \nabla = \bar\nabla + A, \qquad
  A = A^i_{jk} \frac{\partial}{\partial x^i}\otimes dx^j\otimes dx^k.
  \]
  The tensor $A$ vanishes at time $t=t_1$, and it evolves by
  \begin{equation}
    \partial_t A^i_{jk} =
    -g^{i\ell}\bigl\{
    \nabla_j R_{\ell k} + \nabla_k R_{j \ell} - \nabla_\ell R_{jk}
    \bigr\}.
    \label{eq:A-evolution}
  \end{equation}
  Our construction of $\tg(t)$ was such that all covariant derivatives
  of the Riemann curvature of $\tg(t)$ are bounded on any time
  interval $\varepsilon\le t\le t_*$.  In particular, the Ricci tensor
  is uniformly bounded, and hence the metrics $\tg(t)$ are equivalent,
  in the sense that
  \begin{equation}
    \frac{1}{C_1(t)} \tg(t_1) \leq \tg(t) \leq C_1(t) \tg(t_1)
    \label{eq:metrics-equivalent}
  \end{equation}
  for some constants $C_1(t)$ (which become unbounded as $t\searrow0$).
  Because of this, uniform bounds for some tensor with respect to one
  metric $\tg(t)$ are equivalent to uniform bounds with respect to any
  other metric $\tg(t')$, as long as $t,t'>0$.

  Since $\nabla_i R_{jk}$ is bounded, \eqref{eq:metrics-equivalent} and
  \eqref{eq:A-evolution} imply that $\partial_tA$ is uniformly
  bounded, so that we have
  \begin{equation}
    |A|\leq C(t)|t-t_1|,
    \label{eq:C0-A-estimate}
  \end{equation}
  in which the constants $C(t)$ again deteriorate as $t\searrow0$.

  Because $\bar\nabla\tg(t) = (\nabla -A)\tg(t) = -A*\tg(t)$ implies that
  $\bar\nabla\tg(t)$ is bounded, this leads to a Lipschitz estimate
  for the metric $\tg(t)$.

  Next, we estimate $\bar\nabla A$ by applying the time-independent
  connection $\bar\nabla$ to both sides of \eqref{eq:A-evolution}.
  This tells us that
  \[
  \partial_t \bar\nabla A = \bar\nabla\partial_t A
  = \nabla(\partial_t A) - A*A.
  \]
  We have just bounded $A$, so the second term is bounded, while the
  first term must also be bounded since it is given by
  \eqref{eq:A-evolution}.  Thus we find that
  \begin{equation}
    |\bar\nabla A| \leq C(t) |t-t_1|
    \text{ and }
    |\nabla A| \leq C(t) |t-t_1|.
    \label{eq:C1-A-estimate}
  \end{equation}
  Inductively, one bounds all higher derivatives, $\bar\nabla^k A$, and
  $\nabla^k A$.

  To bound $\bar\nabla^k F$ for any tensor $F$ we write this derivative
  as $\bar\nabla^kF = (\nabla -A)^k F$.  Expanding this leads to a
  polynomial in  $F, \nabla F, \cdots, \nabla^k F$ and $A, \nabla A,
  \cdots, \nabla^{k-1}A$.  The bounds for $\nabla^jA$ which we have
  just established then show that bounds for $F, \nabla F, \cdots,
  \nabla^k F$ imply similar bounds for $F, \bar\nabla F, \cdots,
  \bar\nabla ^k F$.

  In the special case where $F=\tg(t)$, we have the trivial bounds
  $\nabla^k \tg(t) = 0$, so that all derivatives $\bar\nabla^k\tg(t)$
  are bounded.  As stated before, this implies that the metrics
  $\tg(t)$  extend smoothly to $\cS^{n+1}$.
\end{proof}

\appendix

\section{Rotationally symmetric neckpinches}
\label{sec:DifferentiateTheAsymptotics}

Here we recall relevant notation and results from \cite{AK04, AK07}.
Remove the poles $P_{\pm}$ from $\cS^{n+1}$ and identify $\cS
^{n+1}\backslash\{P_{\pm}\}$ with $(-1,1)\times\cS^{n}$.  In \cite{AK04},
we considered Ricci flow solutions whose initial data are smooth
$\mathrm{SO}(n+1)$-invariant metrics of the form
\begin{equation}
  g=\varphi(x)^2\,(dx)^2+\psi(x)^2\,\gcan ,
  \label{eq:InitialDatum}
\end{equation}
where $x\in(-1,1)$.  Parameterizing by arc length $s(x)\doteqdot\int_0
^{x}\varphi(y)\,dy$ and abusing notation, we wrote (\ref{eq:InitialDatum})
in a more geometrically natural form,\footnote{The choice of $s$ as a
parameter has the effect of fixing a gauge, thereby making Ricci flow
strictly parabolic.} namely
\[
g=(ds)^2+\psi(s)^2\,\gcan .
\]
Smoothness at the poles is ensured by the boundary conditions that
$\lim_{x\to\pm1}\psi_{s}=\mp1$ and that $\psi/(s_{\pm}-s)$ be a smooth even
function of $s_{\pm}-s$, where $s_{\pm}\doteqdot s(\pm1)$.

Metrics of the form (\ref{eq:InitialDatum}) have two distinguished
sectional curvatures: let $K$ denote the curvature of the $n$ $2$-planes
perpendicular to a sphere $\{x\}\times\cS^{n}$, and let $L$ denote the
curvature of the $\binom{n}2$ $2$-planes tangential to
$\{x\}\times\cS^{n}$.  These sectional curvatures are given by the formulas
\begin{equation}
  K\doteqdot-\frac{\psi_{ss}}{\psi}\qquad\text{and}\qquad L\doteqdot\frac
  {1-\psi_{s}^2}{\psi^2}.  \label{eq:Sectionals}
\end{equation}
Evolution of the metric (\ref{eq:InitialDatum}) by Ricci flow is equivalent
to the coupled system of equations
\begin{equation}
  \left\{
  \begin{aligned}
    \psi_t & =\psi_{ss}-(n-1)\frac{1-\psi_s^2}{\psi} =-\psi[ K+(n-1)L]\\
    \varphi_t & =\frac{n\psi_{ss}}{\psi}\varphi = -n K\varphi
  \end{aligned}\right.
  \label{eq:RF-psi}
\end{equation}
in which $\partial_s$ is to thought of as an abbreviation of $\varphi(x,
t)^{-1}\partial_x$.

In \cite{AK04, AK07}, we called local minima of $\psi$ \emph{necks} and
local maxima \emph{bumps}.  We called the region between a pole and its
closest bump a \emph{polar cap}.  In \cite{AK04}, neckpinch singularity
formation was established for an open set of initial data of the form
(\ref{eq:InitialDatum}) on $\cS^{n+1}$ $(n\geq2)$ satisfying the following
assumptions: (1) The metric has at least one neck and is ``sufficiently
pinched'', i.e.~the value of $\psi$ at the smallest neck is sufficiently
small relative to its value at either adjacent bump.  (2) The sectional
curvature $L$ of planes tangent to each sphere $\{x\}\times \cS^{n}$ is
positive.  (3) The Ricci curvature $\Rc =nK\,ds^2+[K+(n-1)L]\psi^2\,\gcan $
is positive on each polar cap.  (4) The scalar curvature $R=2nK+n(n-1)L$ is
positive everywhere.  In \cite{AK07}, precise asymptotics were derived
under the additional hypothesis: (5) The metric is reflection symmetric,
i.e.~$\psi(s)=\psi(-s)$, and the smallest neck is at $x=0$.

To describe the asymptotic profile derived for such data, let $T_0<\infty$
denote the singularity time, let $\Psi(\sigma,\tau)=\psi(s,t)/\sqrt
{2(n-1)(T_0-t)}$ be the ``blown-up'' radius, where $\sigma=s/\sqrt{T_0-t} $
is the rescaled distance to the neck and $\tau=-\log(T_0-t)$ is rescaled
time.  Then the main results of \cite{AK07} can be summarized as follows.

\begin{theorem}
  \label{Theorem-AK2}For an open set of initial metrics satisfying
  assumptions~(1)--(5) above, the solution $(\cS^{n+1},g_{t})$ of Ricci
  flow becomes singular at a time $T_0<\infty$ depending continuously on
  $g_0$.  The diameter remains uniformly bounded for all $t\in[0,T_0]$.
  The metric becomes singular only on the hypersurface $\left\{ 0\right\}
  \times\cS^{n}$.  The solution satisfies the following asymptotic profile.

  \textbf{Inner region: }on any interval $|\sigma|\leq A$, one has
  \[
  \Psi(\sigma,\tau)=1+\frac{\sigma^2-2}{8\tau}+o(\frac1{\tau})\qquad
  \text{uniformly as }\tau\to\infty.
  \]

  \textbf{Intermediate region: }on any interval $A\leq|\sigma|\leq
  B\sqrt{\tau} $, one has
  \[
  \Psi(\sigma,\tau)=\sqrt{1+\left( 1+o(1)\right) \frac{\sigma^2}{4\tau}
  }\qquad\text{uniformly as }\tau\to\infty.
  \]

  \textbf{Outer region: }for $0<\left| s\right| \ll1$, there exists a
  function $h$ such that
  \[
  \psi(s,t)=\left[ 1+h(\left| s\right| ,T_0-t)\right] \frac
  {\sqrt{n-1}}2\frac{\left| s\right| }{\sqrt{-\log|s| }},
  \]
  where $h(a,b)\to0$ as $a+b\searrow0$.
\end{theorem}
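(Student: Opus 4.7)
The plan is to combine preservation-of-structure arguments for the coupled system \eqref{eq:RF-psi} with a matched-asymptotics analysis in self-similar variables around the shrinking neck. First I would verify that assumptions~(1)--(5) are preserved by the flow: the signs of $L$, of $\Rc$ on polar caps, and of the scalar curvature $R$, together with the pointwise gradient bound $|\psi_{s}|\le 1$, all follow from tensor maximum-principle arguments applied to \eqref{eq:RF-psi} and to the auxiliary evolution equations for $K$, $L$, and $\psi_{s}^{2}$. Uniform $|\psi_{s}|\le 1$ then bounds the diameter. A maximum principle for the smallest neck radius $\psi_{\min}(t)$, combined with the two-sided pinching estimate on $a\doteqdot\psi\psi_{ss}-\psi_{s}^{2}+1$, shows that $\psi_{\min}$ vanishes at some finite time $T_0<\infty$; continuous dependence of $T_0$ on $g_0$ follows from standard continuity of parabolic solutions together with the strict negativity of $\partial_t(\psi_{\min}^{2})$ at the moment of pinching.

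To localize the singularity at $\{0\}\times\cS^{n}$, the bound $|a|\le\amax$ combined with reflection symmetry (5) prevents bumps adjacent to the central neck from shrinking to zero on $[0,T_0]$: at a bump, $\psi_s=0$, and \eqref{eq:ddt-psi-squared-bound} gives $|(\psi^{2})_t|\le 2(\amax+n)$, so $\psi$ at each bump remains bounded below by a definite constant on $[0,T_0]$. Standard parabolic regularity on the complement of any shrinking neighborhood of the neck then confines the singular set to $\{0\}\times\cS^{n}$.

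The main obstacle is the inner-region profile. Introducing $\sigma=s/\sqrt{T_0-t}$, $\tau=-\log(T_0-t)$, and $\Psi(\sigma,\tau)=\psi/\sqrt{2(n-1)(T_0-t)}$, the rescaled equation admits the cylinder $\Psi\equiv 1$ as a stationary solution, and its linearization at $\Psi=1$ is a Hermite-type operator whose spectrum consists of $\lambda_k=1-k/2$ with eigenfunctions proportional to the Hermite polynomials $H_k(\sigma)$. The unstable modes $k=0,1$ are eliminated by a choice of origin in space-time; the neutral mode $k=2$ drives the slow drift and, after a logarithmic time rescaling, produces the correction $(\sigma^{2}-2)/(8\tau)$. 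Making this rigorous requires constructing explicit sub- and supersolutions of the rescaled PDE of the form $\Psi=1+(\sigma^{2}-2\pm\eta)/(8\tau)$ on each compact interval $|\sigma|\le A$ and running the comparison principle from pinched initial data; verifying that these barriers remain compatible with $L>0$, $\Rc>0$ on the caps, and $R>0$ is where assumptions~(2)--(4) enter essentially, and it is here that the ``open set'' hypothesis is used to absorb any transversality condition on the neutral mode.

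The intermediate and outer regions then follow by matched asymptotics. In $A\le|\sigma|\le B\sqrt{\tau}$, dropping lower-order terms in the rescaled PDE reduces it to a first-order transport equation whose explicit solution $\Psi^{2}\approx 1+\sigma^{2}/(4\tau)$ provides barriers matching the inner profile at $|\sigma|=A$. In the outer region $|s|\ll 1$, working in the original $(s,t)$ variables near the neck, I would identify the log-corrected cusp $\psi\sim\tfrac{\sqrt{n-1}}{2}\,|s|/\sqrt{-\log|s|}$ as the unique quasi-stationary profile compatible with the intermediate region and with the regularity of $\psi_{ss}$ away from the pinch; a barrier construction again yields the stated uniform convergence $h(|s|,T_0-t)\to 0$. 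Gluing the three sets of barriers via the parabolic maximum principle for \eqref{eq:RF-psi}---in the same spirit as the construction in Section~\ref{sec:Informal}---then yields the claimed asymptotics in each region.
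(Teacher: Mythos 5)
The paper does not prove Theorem~\ref{Theorem-AK2}; it presents it explicitly as a summary of the main results of \cite{AK04} (formation and localization of the neckpinch, diameter bound) and \cite{AK07} (the precise inner/intermediate/outer asymptotics under the reflection-symmetry hypothesis (5)). There is therefore no internal proof to compare your sketch against: the result is imported wholesale, and the only thing proved in Appendix~A is the subsequent Lemma~\ref{cor:AK2-Differentiable}, which extracts a differentiated version of the outer profile needed for Section~\ref{sec:Setup}. Your outline does capture the broad strategy of the cited references --- maximum-principle preservation of the structural assumptions and of $|\psi_s|\le 1$, two-sided control of $a=\psi\psi_{ss}-\psi_s^2+1$, finite extinction of $\psi_{\min}$, self-similar rescaling at the neck with a Hermite-type linearization about the cylinder, and barriers matched across the three layers --- but it is necessarily a very compressed account of a long quantitative argument and would not substitute for the references.

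Two points in the sketch are genuinely off. First, $|\psi_s|\le 1$ gives $\psi(s)\le\min(s,\ell-s)$, that is, it bounds the \emph{radius} $\psi$ in terms of the arc length to the nearest pole; it does not bound the arc length $\ell$ and hence does not bound the diameter (a long near-cylindrical stretch has $\psi_s\approx 0$ yet arbitrary length). The diameter bound in \cite{AK04} requires a separate argument. Second, the unstable $k=1$ Hermite mode is not removed by a ``choice of origin in space-time'' but by the reflection-symmetry hypothesis (5), which confines the analysis to even modes from the outset; only the $k=0$ mode is killed by the choice of the singular time $T_0$. Likewise, the ``open set'' condition in (1)--(4) is not absorbing a transversality condition on the neutral $k=2$ mode; it encodes the pinching and curvature-positivity hypotheses that keep the solution in the regime where the barrier construction and entropy/monotonicity estimates of \cite{AK07} apply. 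As written, these steps of your sketch would not close, though they point toward the right mechanisms.
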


To flow forward from a neckpinch singularity, one must show that these
asymptotics can be differentiated.

\begin{lemma}
  \label{cor:AK2-Differentiable} Under the hypotheses of
  Theorem~\ref{Theorem-AK2}, the limit
  \[
  \psi(s)=\lim_{t\nearrow T_0}\psi(s(x,t),t)
  \]
  exists and satisfies
  \begin{equation}
    \psi(s)=\left[  1+h_0(s)\right]  \frac{\sqrt{n-1}}2\frac{s}{\sqrt{-\log
    s}}.  \label{initial-psi}
  \end{equation}
  This asymptotic profile may be differentiated.  In particular, one has
  \begin{equation}
    \psi_{s}(s)=\left[  1+h_1(s)\right]  \frac{\sqrt{n-1}}2\frac1
    {\sqrt{-\log s}} \label{initial-psi_s}
  \end{equation}
  for $s>0$, where $h_0(s)\to0$ and $h_1(s)\to0$ as $s\searrow0$.
\end{lemma}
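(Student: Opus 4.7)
The plan has three steps; the last is the technical core.

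\emph{Step 1 (Existence of the pointwise limit).} By Theorem~\ref{Theorem-AK2}, the singularity is confined to $\{s=0\}$. Hence on every slab $\{s \geq s_0\} \times [0, T_0)$ with $s_0 > 0$, the sectional curvatures $K$ and $L$ are uniformly bounded, and Shi's derivative estimates extend this to uniform bounds on all spatial derivatives of $\psi$. The evolution equation~(\ref{eq:RF-psi}) then gives a uniform bound on $\psi_t$ (since $\psi$ itself is bounded away from zero on this slab), so $\psi(\cdot, t)$ converges in $C^\infty_{\mathrm{loc}}(0, \ell)$ to a smooth limit $\psi(\cdot)$ as $t \nearrow T_0$.

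\emph{Step 2 (Leading order asymptotic).} For fixed $s > 0$, Step~1 ensures that $h_0(s) \doteqdot \lim_{t \nearrow T_0} h(s, T_0-t)$ exists, and (\ref{initial-psi}) follows directly from the outer-region formula of Theorem~\ref{Theorem-AK2}. The property $h_0(s) \to 0$ as $s \searrow 0$ is a diagonal argument: given $\varepsilon>0$, choose $\delta$ with $|h(a,b)|<\varepsilon$ for $a+b<\delta$; then for $s<\delta/2$ and $T_0-t<\delta/2$, one has $|h(s, T_0-t)|<\varepsilon$, whence $|h_0(s)| \leq \varepsilon$.

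\emph{Step 3 (Differentiated asymptotic).} Rescale by setting, for small $\lambda > 0$,
\[
\Phi_\lambda(u) \doteqdot \frac{2\sqrt{-\log\lambda}}{\sqrt{n-1}} \, \frac{\psi(\lambda u)}{\lambda}, \qquad u \in [\tfrac12, 2].
\]
Step~2 gives $\Phi_\lambda(u) \to u$ uniformly on $[\tfrac12, 2]$, and the target (\ref{initial-psi_s}) at $s=\lambda$ is equivalent to $\Phi_\lambda'(1) \to 1$. To upgrade $C^0$ convergence to convergence of derivatives, I will apply the Landau--Kolmogorov interpolation $\|f'\|_\infty \leq C\sqrt{\|f\|_\infty \|f''\|_\infty}$ on an interior subinterval to $f=\Phi_\lambda-u$, using a uniform bound on $\Phi_\lambda''$ coming from the scale-invariant curvature bound $|a|=|\psi\psi_{ss}-\psi_s^2+1| \leq \amax$ of \cite[Corollary~3.2]{AK04} together with $|\psi_s| \leq 1$.

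\emph{Main obstacle.} The bound $|a| \leq \amax$ alone only gives $|\Phi_\lambda''| = O(-\log\lambda)$, which is too weak for the interpolation to close the gap. I expect to circumvent this by sharpening the decay rate of $h_0$: pushing the intermediate-region asymptotic of Theorem~\ref{Theorem-AK2} across the interface $|\sigma| \sim B\sqrt\tau$ with the outer region should yield a quantitative rate $h_0(s) = o\bigl(1/(-\log s)\bigr)$. Via the identity $(\psi^2)_{ss} = 2\psi\psi_{ss} + 2\psi_s^2$ combined with $\psi\psi_{ss} = \psi_s^2 - 1 + a$, this sharpened rate forces $a(s) \to 1$ as $s \searrow 0$, yielding $|\Phi_\lambda''| = o(-\log\lambda)$; the interpolation inequality then produces $\Phi_\lambda'(1) \to 1$ and hence (\ref{initial-psi_s}).
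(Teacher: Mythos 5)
The gap is in Step 3, and it is a genuine one. You yourself flag the core issue --- the scale-invariant bound $|a|\le\amax$ only yields $|\Phi_\lambda''|=O(-\log\lambda)$ --- but the proposed workaround does not close it. First, Theorem~\ref{Theorem-AK2} only gives $h(a,b)\to 0$; a quantitative rate $h_0(s)=o\bigl(1/(-\log s)\bigr)$ is not something one can ``push across the interface'' without a substantial new estimate in the overlap between the intermediate and outer regions, and that estimate is not provided. Second, and more seriously, the claimed deduction that $a(s)\to 1$ from a sharpened rate on $h_0$ implicitly twice-differentiates the asymptotic for $\psi^2$. That is precisely the content of the lemma: knowing a pointwise asymptotic for a function does not license differentiating it unless one supplies additional structural input. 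So even granting the sharpened rate, the argument is circular. The net effect is that Step~3 is a program, not a proof.

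The paper's proof takes a different and shorter route, and the missing ingredient in your proposal is exactly the one it exploits. From \cite{AK07} one has, for $t$ close to $T_0$ and $x$ in a polar cap, $\psi_s\ge 0$ \emph{and} $\psi_{ss}\ge 0$; these one-sided bounds pass to the limit, so the limit profile $\psi(s)$ is nondecreasing and convex near $s=0$ with $\psi(0)=0$. For a convex function vanishing at the origin, the derivative is pinned between divided differences of the function itself: $\tfrac{\psi(s)-\psi(s/2)}{s/2}\le\psi_s(s)\le\tfrac{\psi(2s)-\psi(s)}{s}$, and plugging the already-established asymptotic (\ref{initial-psi}) into both sides yields (\ref{initial-psi_s}) directly. (The paper phrases this as an integration-plus-contradiction argument using the estimate (\ref{eq:IntegrationEstimate}), but the mechanism is the same.) This sidesteps any need to control $\psi_{ss}$ from above, to sharpen $h_0$, or to invoke Landau--Kolmogorov interpolation. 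If you want to keep your rescaled framework, the fix is to use $\Phi_\lambda''\ge 0$ (convexity) rather than $|\Phi_\lambda''|$ bounds: a uniformly convex family with $\Phi_\lambda(u)\to u$ on $[\tfrac12,2]$ automatically has $\Phi_\lambda'(1)\to 1$.
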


\begin{proof}
  Existence of the limit and equation~(\ref{initial-psi}) follow directly
  from the results in Section~2.19 of \cite{AK07}.

  Recall that \cite{AK07} proves there exist $T_*<T_0$ and $x^*>0 $ such
  that $\psi_{s}(s(x,t),t)\geq0$ and $\psi_{ss}(s(x,t),t)\geq0$ in the
  space-time region $[0,x^*]\times[ T_*,T_0)$.  Moreover, the limit
  $s(x,T_0)$ exists for all $x\in[0, x^*]$.  Fix any $\underline{x}
  ,\overline{x}\in[0,x^*]$ with $\underline{x}<\overline{x}$.  Then one has
  \[
  \frac{\partial}{\partial t}\left\{ s(\overline{x},t)-s(\underline{x}
  ,t)\right\}
  =-n\int_{s(\underline{x},t)}^{s(\overline{x},t)}K(s(x,t),t)\,
  ds\,=n\int_{s(\underline{x},t)}^{s(\overline{x},t)}\frac{\psi_{ss}
  (s(x,t),t)}{\psi(s(x,t),t)}\,ds\,\geq0
  \]
  for all $t\in[ T_*,T_0)$, and
  \[
  \psi(s(\overline{x},t),t)-\psi(s(\underline{x},t),t)=\int_{s(\underline{x}
  ,t)}^{s(\overline{x},t)}\psi_{s}(s(x,t),t)\,ds\,\geq\int_{s( \underline
  {x},T_*)}^{s(\overline{x},T_*)}\psi_{s}(s(x,t),t)\, ds\,\geq0.
  \]
  Letting $t\nearrow T_0$, one sees that $\psi(s)\equiv\psi(s,T_0)$ is
  monotone increasing in small $s>0$, hence is differentiable almost
  everywhere.

  In what follows, $h_{i}(s)$ denotes a family of functions with the
  property that $h_{i}(s)\to0$ as $s\searrow0$.  First observe that
  \begin{equation}
    \left\{
    \begin{aligned}
      \int_0^{\hat{s}}(-\log s)^{-1/2}\,d s\, & =\frac{\hat{s}}
      {\sqrt{-\log\hat{s}}}
      -\frac12\int_0^{\hat{s}}(-\log s)^{-3/2} \,d s\,\\
      & =\left[ 1+h_2(\hat{s})\right]
      \frac{\hat{s}}{\sqrt{-\log\hat{s}}}.
    \end{aligned}
    \right.
    \label{eq:IntegrationEstimate}
  \end{equation}
  Now suppose there exists a fixed $\varepsilon>0$ such that
  \[
  \psi_{s}(s)\geq\left( 1+\varepsilon\right) \frac{\sqrt{n-1}}2\frac
  1{\sqrt{-\log s}}
  \]
  for a.e.~small $s>0$.  Then applying (\ref{initial-psi}) and
  (\ref{eq:IntegrationEstimate}), one obtains
  \begin{align*}
    \psi(\hat{s})=\int_0^{\hat{s}}\psi_{s}(s)\,ds\, & \geq\left(
    1+\varepsilon\right) \left[ 1+h_2(\hat{s})\right] \frac{\sqrt{n-1}}
    2\frac{\hat{s}}{\sqrt{-\log\hat{s}}}\\
    & \geq\left( 1+\varepsilon\right) \left[ 1+h_3(\hat{s})\right]
    \psi\left( \hat{s}\right) .
  \end{align*}
  Dividing by $\psi(\hat{s})>0$ yields $1\geq\left( 1+\varepsilon\right)
  \left[ 1+h_3(\hat{s})\right] $, which is impossible for small $\hat{s}$.
  It follows that
  \[
  \psi_{s}(s)\leq\left[ 1+h_4(s)\right] \frac{\sqrt{n-1}}2\frac1
  {\sqrt{-\log s}}.
  \]
  The complementary inequality is proved similarly.
\end{proof}

\section{Expanding Ricci solitons}

\label{app:AntiParabolic}

In this appendix, we prove a pair of lemmas which establish the assertion,
made in Section~\ref{sec:Formal}, that every complete expanding soliton
corresponding to a solution of
\begin{equation}
  U^2U_{\rho\rho}+\left\{  \frac{n-1-U^2}{\rho}+\frac{\rho}2\right\}
  U_{\rho}+\frac{n-1}{\rho^2}\left(  1-U^2\right)  U=0
\end{equation}
emerges from conical initial data,
\[
g=\frac{(dr)^2}{U_{\infty}^2}+r^2\gcan ,
\]
with $U_{\infty}>0$.

\begin{lemma}
  \label{UstaysPositive}Let $U(\rho)$ be a solution of
  \eqref{eq:expander-ode} on an interval $a\leq\rho<A\leq\infty$.

  If $U'<0$ and $0<U<1$, then $\lim_{\rho\to A}U(\rho)>0$.
\end{lemma}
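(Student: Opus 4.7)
The plan is to argue by contradiction: suppose $U(\rho)\to 0$ as $\rho\to A$. Since $-\log U(\rho)+\log U(a)=\int_a^\rho \psi\,d\rho$ with $\psi:=-U'/U>0$, this assumption forces $\int_a^A \psi\,d\rho = +\infty$. I will derive a contradiction by exhibiting an upper bound $\bar\psi$ for $\psi$ whose integral over $[a,A)$ is finite.

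To control $\psi$, I first reduce \eqref{eq:expander-ode} to a Riccati-type first-order equation. Substituting $U'=-\psi U$ and $U''=U(\psi^2-\psi')$ and dividing by $U$ yields
\[
U^2 \psi' \;=\; U^2 \psi^2 \;-\; \mathcal{A}(\rho,U)\,\psi \;+\; \mathcal{B}(\rho,U),
\]
with $\mathcal{A}(\rho,U) := (n-1-U^2)/\rho + \rho/2$ and $\mathcal{B}(\rho,U) := (n-1)(1-U^2)/\rho^2$, both continuous and uniformly bounded above and below by positive constants on compact subsets of $\{\rho>0\}\times[0,1]$. The right-hand side admits an instantaneous equilibrium at
\[
\psi_{\mathrm{eq}}(\rho) \;:=\; \mathcal{B}(\rho,0)/\mathcal{A}(\rho,0) \;=\; \frac{2(n-1)}{\rho\bigl(2(n-1)+\rho^2\bigr)},
\]
which is integrable on $[a,\infty)$ with $\int_a^\infty \psi_{\mathrm{eq}}\,d\rho = \tfrac12\log(1+2(n-1)/a^2)<\infty$ by partial fractions. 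Whenever $\psi$ substantially exceeds $\psi_{\mathrm{eq}}$, the negative forcing $-\mathcal{A}\psi+\mathcal{B}$ dominates and drives $\psi$ back down.

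The main step is a first-contact argument against a suitable barrier $\bar\psi$. For $A<\infty$, a constant barrier $\bar\psi\equiv K_0$ (chosen larger than $\psi(a)$ and $\sup_{[a,A]}\psi_{\mathrm{eq}}$) suffices: at a putative first touching point $\rho^*$, the Riccati equation gives $U^2\psi'(\rho^*) = U^2 K_0^2 - \mathcal{A}K_0 + \mathcal{B}$, which becomes strictly negative once $U(\rho^*)^2$ is small enough relative to $\mathcal{A}/K_0$; since $U\to 0$ by hypothesis, this eventually holds and contradicts $\psi'(\rho^*)\geq 0$. For $A=\infty$, the same argument applies with a $\rho$-dependent barrier $\bar\psi(\rho) := C\psi_{\mathrm{eq}}(\rho) + Me^{-\lambda(\rho-a)}$ tailored to be integrable at infinity. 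In either case, $\int_a^A \psi\,d\rho \leq \int_a^A \bar\psi\,d\rho < \infty$ produces the desired contradiction.

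I expect the main technical obstacle to be the first-contact computation in the regime where $U(\rho^*)$ is not yet small: the quadratic term $U^2\bar\psi^2$ in the Riccati equation is then not automatically negligible, so the parameters of the barrier must be tuned to dominate it uniformly. If this direct tuning proves delicate, an alternative is to work with the auxiliary quantity $\phi := -UU' = U^2\psi$, whose evolution equation $\phi' = -\phi(\phi+\mathcal{A})/U^2 + \mathcal{B}$ exhibits the damping $-\phi^2/U^2$ manifestly and drives $\phi$ toward its equilibrium $\phi_{\mathrm{eq}}\approx \mathcal{B}U^2/\mathcal{A}$; bounding $\phi$ in terms of a multiple of $\phi_{\mathrm{eq}}$ then delivers the same bound on $\psi$.
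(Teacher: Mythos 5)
Your overall strategy is the same as the paper's: set $\psi := -U'/U > 0$, observe that $\int_a^A \psi\,d\rho = +\infty$ is equivalent to $U(\rho)\to 0$ as $\rho\to A$, and derive a contradiction by bounding $\int \psi$. Your Riccati reformulation $U^2\psi' = U^2\psi^2 - \mathcal{A}\psi + \mathcal{B}$ is correct, and the expression for $\psi_{\mathrm{eq}}$ and its integral is right. But the barrier/first-contact mechanism you propose does not close, and the obstacle you flag in your last paragraph is genuine, not merely technical.

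The problem is at $\rho=a$, not near $\rho=A$. To run a first-contact argument you must choose the barrier above $\psi(a)$, but $\psi(a) = -U'(a)/U(a)$ is completely uncontrolled by the hypotheses of the lemma. If $\psi(a)$ exceeds (roughly) $\mathcal{A}(a)/U(a)^2$, then $\psi$ starts \emph{above} the attractive interval of the Riccati equation, so $\psi'(a)>0$, and any constant barrier $K_0>\psi(a)$ will have $U^2K_0^2 - \mathcal{A}K_0 + \mathcal{B}>0$ at points near $a$ where $U(\rho^*)$ is still comparable to $U(a)$. The first contact can occur arbitrarily close to $\rho=a$, so the appeal to ``$U\to 0$ by hypothesis, this eventually holds'' is a non sequitur: ``eventually'' has no bearing on where the first contact happens. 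The same issue afflicts the $A=\infty$ barrier $C\psi_{\mathrm{eq}}+Me^{-\lambda(\rho-a)}$ and the proposed barrier for $\phi := -UU'$: in all cases you need the barrier to simultaneously dominate the uncontrolled initial value and lie in the region where the Riccati right-hand side is negative, and nothing in the hypotheses guarantees these are compatible.

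The paper avoids this entirely by \emph{not} using a barrier. Dividing the ODE by $U$ gives $\mathcal{A}\psi = UU'' + \mathcal{B}$; for $a\geq 2$ one has $\mathcal{A}\geq 1$, hence $\psi \leq UU'' + (n-1)/\rho^2$. Integrating and performing integration by parts, $\int_a^b UU'' = [UU']_a^b - \int_a^b (U')^2\,d\rho \leq U(b)U'(b) - U(a)U'(a) \leq -U(a)U'(a)$, because $(U')^2\geq 0$ and $U(b)U'(b)<0$. This yields $\int_a^b\psi \leq -U(a)U'(a) + (n-1)/a$, a bound independent of $b$. Notice the quantity $\psi(a)$ (or rather $\phi(a)=-U(a)U'(a)$) appears as a finite constant in the final bound rather than as something a barrier must be tuned to dominate -- that is exactly what your approach cannot manufacture. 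Your $\phi$-alternative is actually the right object: from $\phi' = -\phi(\phi+\mathcal{A})/U^2 + \mathcal{B}$, dropping $-\phi^2/U^2\leq 0$ and using $\mathcal{A}\phi/U^2 = \mathcal{A}\psi\geq \psi$ gives $\phi' \leq -\psi + (n-1)/\rho^2$; integrating this directly, rather than comparing $\phi$ against a barrier, reproduces the paper's bound with no tuning.
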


\begin{proof}
  Since $U$ is decreasing, we may assume that $\lim_{\rho\to A}U=0$ in
  order to reach a contradiction.  Assuming, as we may, that $a$ is
  sufficiently large, we have
  \begin{align*}
    -\frac{U'}{U} & \leq-\left[ \tfrac12\rho-\frac{U^2}{\rho}+
    \frac{n-1}{\rho}\right]  \frac{U'}{U}\\
    &  =UU''+\frac{n-1}{\rho^2}(1-U^2)\\
    & \leq UU''+\frac{n-1}{\rho^2}
  \end{align*}
  for all $\rho\geq a$.  Integrate from $a$ to $b\in(a,A)$ to obtain
  \begin{align*}
    \log U(a)-\log U(b)  &  =\int_{a}^{b}\frac{U'}{U}\,d \rho\,\\
    & \leq\int_{a}^{b}\left[ UU''+\frac{n-1}{\rho^2}\right] \, d
    \rho\,\\
    &=[UU']_{a}^{b}
    -\int_{a}^{b}(U')^2d\rho+\int_{a}^{b}\frac{n-1}{\rho^2}\,d \rho\,\\
    & \leq U(b)U'(b)-U(a)U'(a)+\int_{a}^{b}\frac{n-1}{\rho^2
    }\,d \rho\,\\
    & \leq-U(a)U'(a)+\frac{n-1}{a}.
  \end{align*}
  Keeping $a$ fixed, we see that $\log U(b)$ remains bounded from above for
  all $b\in(a,A)$, and hence that $U(b)$ is bounded away from zero as
  $b\nearrow A$.
\end{proof}

\begin{lemma}
  \label{ConesAreAllWeGot}Let $U(\rho)$ be a maximal solution of
  \eqref{eq:expander-ode} defined for $0<\rho<A$.  Assume that
  $0<U(\rho)<1$ for all $\rho\in(0,A)$ and that
  \[
  \lim_{\rho\searrow0}U(\rho)=1.
  \]
  Then $A=\infty$, while $U$ is strictly decreasing and is bounded from
  below by the positive quantity $U_{\infty}=\lim_{\rho\to\infty}U(\rho)$.
\end{lemma}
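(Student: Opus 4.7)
The plan breaks into three assertions: (i) $U'<0$ throughout $(0,A)$, (ii) $A=\infty$, and (iii) $U_\infty:=\lim_{\rho\to\infty}U(\rho)>0$. Step (i) unlocks the hypothesis of Lemma~\ref{UstaysPositive}, which then drives the remaining two steps. For (i), observe first that $U$ must be decreasing near $\rho=0$: if $U'\ge 0$ on some $(0,\epsilon)$, then $U(\rho)\ge\lim_{\rho'\searrow 0}U(\rho')=1$ on $(0,\epsilon)$, contradicting $U<1$. Hence $U'<0$ on some maximal subinterval $(0,\rho_1)$ with $\rho_1\le A$. If $\rho_1<A$, continuity of $U'$ gives $U'(\rho_1)=0$, and the fact that $U'(\rho)<U'(\rho_1)=0$ for $\rho<\rho_1$ forces $U''(\rho_1)\ge 0$. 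On the other hand, setting $U'(\rho_1)=0$ in \eqref{eq:expander-ode} yields
\[
U(\rho_1)^2\,U''(\rho_1) = -\frac{n-1}{\rho_1^2}\bigl(1-U(\rho_1)^2\bigr)U(\rho_1) < 0,
\]
a contradiction. So $U'<0$ throughout $(0,A)$.

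For (ii), suppose $A<\infty$. By Lemma~\ref{UstaysPositive} there is $U_A>0$ with $U(\rho)\to U_A$ as $\rho\to A$. Fix any $\rho_0\in(0,A)$; on $[\rho_0,A)$ we have $0<U_A\le U\le U(\rho_0)<1$ and $\rho\ge\rho_0>0$. Solving \eqref{eq:expander-ode} for $U''$ produces a scalar linear equation for $V=U'$,
\[
V' = -\left[\frac{n-1-U^2}{\rho U^2}+\frac{\rho}{2U^2}\right]V - \frac{(n-1)(1-U^2)}{\rho^2 U},
\]
whose coefficients are uniformly bounded on $[\rho_0,A)$. A standard Gronwall estimate therefore keeps $V$ bounded up to $\rho=A$, and continuity of $V'$ shows that $V$ is Lipschitz and so has a finite limit at $A$. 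Thus the trajectory $(\rho,U,U')$ remains in a compact subset of the domain $\{\rho>0,\,U>0\}$ on which \eqref{eq:expander-ode} is a smooth second order ODE, and standard local existence continues the solution past $A$, contradicting maximality. Hence $A=\infty$.

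For (iii), since $U$ is strictly decreasing and bounded in $(0,1)$, the limit $U_\infty=\lim_{\rho\to\infty}U(\rho)\in[0,1)$ exists; applying Lemma~\ref{UstaysPositive} on $[1,\infty)$ gives $U_\infty>0$. The principal difficulty lies in step (ii): ruling out blow-up of $U'$ before reaching $\rho=A$. The key leverage is that once positivity of $U$ is secured by Lemma~\ref{UstaysPositive}, the second order equation \eqref{eq:expander-ode} becomes a genuinely linear equation for $V=U'$ with bounded coefficients, from which a bound on $V$ is immediate; no monotonicity of $V$ or special sign of the "damping coefficient" is required beyond boundedness.
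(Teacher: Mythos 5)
Your proof is correct and follows essentially the same outline as the paper's: show $U$ has no critical points (since the ODE forces $U''<0$ at any critical point), invoke Lemma~\ref{UstaysPositive} to show $U$ cannot reach zero at finite $\rho$, and apply Lemma~\ref{UstaysPositive} once more to get $U_\infty>0$. The only real difference is that you spell out the ODE extension step (ii) in detail: the paper simply asserts that a maximal solution must satisfy $U\to 0$ at a finite right endpoint, whereas you make explicit the crucial observation that once $U$ is bounded away from $0$ and $\rho$ lies in a compact subinterval, the equation becomes a linear ODE for $V=U'$ with bounded coefficients, so Gronwall prevents blow-up of $U'$ and the trajectory stays in a compact set where the vector field is smooth. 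One minor gap: from ``$U'\ge 0$ fails on every $(0,\epsilon)$'' you jump to ``$U'<0$ on some maximal subinterval $(0,\rho_1)$''; strictly, you should also rule out oscillation of sign of $U'$ near $0$ — but this follows quickly, since two zeros of $U'$ would force a zero with $U''\ge 0$ in between, contradicting $U''<0$ at every critical point. With that small patch the argument is complete.
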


\begin{proof}
  At any point where $U'=0$, the differential equation
  (\ref{eq:expander-ode}) forces $U''<0$.  Thus every critical point of $U$
  is a local maximum.  Since the solution stays between $0$ and $1$ and
  starts at $U(0)=1$, it can have no critical points for $0<\rho<A$, hence
  must be decreasing.

  If the maximal solution exists, then either $A<\infty$ and
  $\lim_{\rho\nearrow A}U(\rho)=0$, or else $A=\infty$.
  Lemma~\ref{UstaysPositive} rules out the possibility that $U$ reaches $0$
  at some finite value of $\rho$, so we must have $A=\infty$.

  But when $A=\infty$, Lemma~\ref{UstaysPositive} still applies and
  guarantees that $U(\infty)>0$.
\end{proof}

\section{The Bryant steady soliton}
\label{sec:Bryant}

A time-independent solution of the \textsc{pde}
(\ref{eq:ExactEquationInnerRegion}) satisfied by a solution $V(\sigma,t)$
in the inner region is a solution of the \textsc{ode} $\cF [V]=0$.  In this
appendix, we find and describe all complete time-independent solutions.

It is convenient here to consider $U=\sqrt{V}$.  It follows from
(\ref{eq:u-pde}) that a stationary solution $U$ must satisfy
\begin{equation}
  \frac{d}{d\sigma}\left[  \frac{dU}{d\sigma}-\frac{n-1}{\sigma}\bigl(U^{-1}
  -U\bigr)\right]  -\frac{n}{\sigma}\frac{dU}{d\sigma}=0.
  \label{eq:U_stationary}
\end{equation}
We introduce a new coordinate
\begin{equation}
  \zeta\doteqdot\log\sigma, \label{eq:Define-zeta}
\end{equation}
in terms of which the metric can be written as
\begin{equation}
  g=\frac{(d\sigma)^2}{U^2}+\sigma^2\gcan =e^{2\zeta}
  \bigl[\frac{(d\zeta)^2}{U^2}+\gcan \bigr].  \label{eq:OurMetric}
\end{equation}
If one writes equation (\ref{eq:U_stationary}) in terms of this new
coordinate, and defines
\[
\Upsilon\doteqdot U_{\zeta}-(n-1)(U^{-1}-U),
\]
one sees that solutions of (\ref{eq:U_stationary}) correspond to solutions
of the autonomous \textsc{ode} system
\begin{equation}
  \frac{d\Upsilon}{d\zeta}=(n+1)\Upsilon+n(n-1)(U^{-1}-U),\qquad\frac{dU}
  {d\zeta}=\Upsilon+(n-1)(U^{-1}-U).  \label{eq:ODESystem}
\end{equation}
Observe that $(\Upsilon,U)=(0,1)$ is a saddle point of this system.  In
fact, the linearization at $(\Upsilon,U)=(0,1)$ is the system
\[
\begin{pmatrix}
  \tilde{\Upsilon}'\\
  \tilde{U}'
\end{pmatrix}
=
\begin{pmatrix}
  n+1 & -2n(n-1)\\
  1 & -2(n-1)
\end{pmatrix}
\begin{pmatrix}
  \tilde{\Upsilon}\\
  \tilde{U}
\end{pmatrix}
\]
with eigenvalues $-(n-1)$ and $2$.  The unstable manifold $W^{u}$ of this
fixed point consists of two orbits.  One orbit lies in the region
$U>1,\Upsilon>0$.  As $\zeta\to\infty$ this orbit becomes unbounded, and
one finds that $U\to\infty$.  One finds that it does not generate a
complete metric on $\R^{n+1}$.  The other orbit lies in the region
$\{(\Upsilon ,U):0<U<1,\Upsilon<0\}$.

\begin{figure}[th]
  \centering
  \includegraphics[scale=0.8]{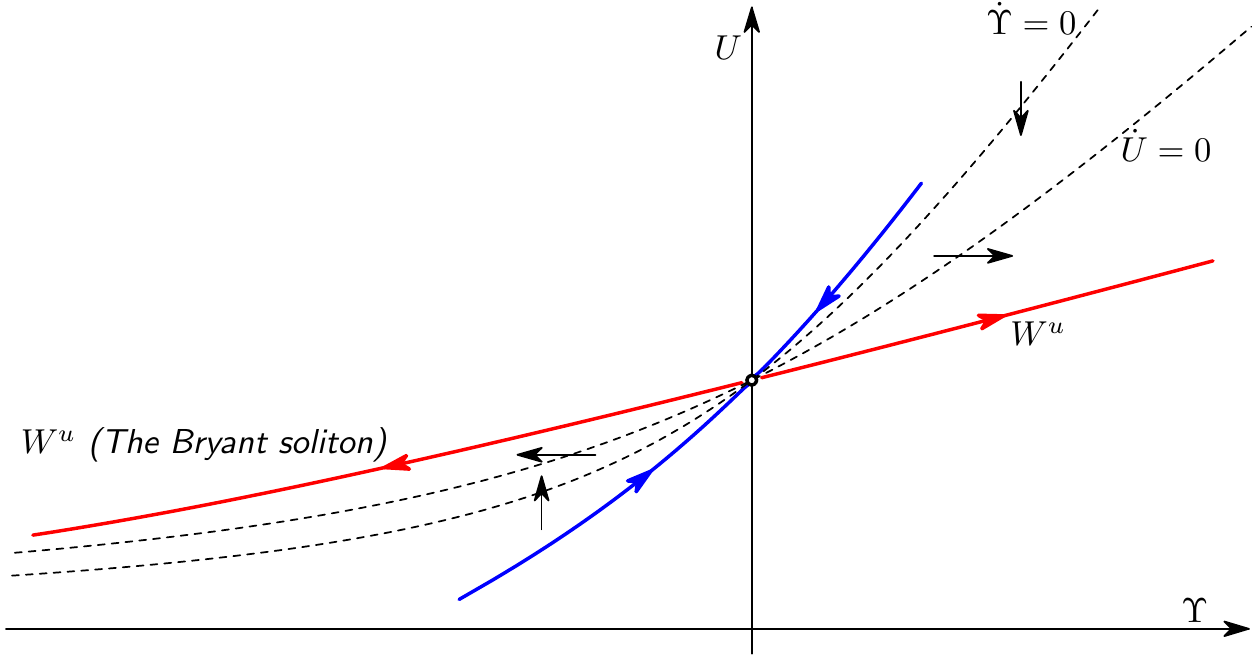}\caption{The $(\Upsilon, U)$
  phase plane ($n=2$)}
  \label{fig:U-Ups-phaseplane}
\end{figure}

\begin{lemma}
  \label{BryantSteadyState} The Bryant steady soliton is, up to scaling,
  the unique complete solution of (\ref{eq:U_stationary}) satisfying
  $0<U<1$.
\end{lemma}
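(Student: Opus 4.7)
The plan is to reduce uniqueness to the one-dimensionality of the unstable manifold of the saddle at $(\Upsilon, U) = (0, 1)$ in the autonomous system \eqref{eq:ODESystem}. First, I would pin down the boundary condition at $\sigma = 0$: for the metric $g = U^{-2}(d\sigma)^2 + \sigma^2 \gcan$ to extend smoothly across the origin of $\R^{n+1}$, one must have $U(\sigma)\to 1$ as $\sigma\searrow 0$, since the orthogonal and tangential sectional curvatures of the asymptotic cone agree only at $U_\infty = 1$. In the autonomous coordinate $\zeta = \log\sigma$, this forces the trajectory $(\Upsilon(\zeta), U(\zeta))$ of \eqref{eq:ODESystem} to limit onto the fixed point $(0,1)$ as $\zeta\to -\infty$.

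Second, I would invoke the unstable manifold theorem at this saddle. The linearization has eigenvalues $-(n-1)<0<2$, so the set of trajectories approaching $(0,1)$ in backward $\zeta$-time is a one-dimensional $C^\infty$ invariant manifold $W^u$, tangent at the fixed point to the unstable eigenvector $(2n, 1)$. Locally, $W^u$ consists of exactly two orbits: one entering the region $\{U>1,\ \Upsilon>0\}$, and one entering $\{U<1,\ \Upsilon<0\}$. The hypothesis $0<U<1$ rules out the first, so any complete solution meeting the hypothesis must coincide with the unique orbit $\Gamma$ generated by the second branch.

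Third, the scaling freedom in the statement falls out automatically. A $\zeta$-translation $\zeta\mapsto \zeta+\zeta_0$ relabels any solution as another parametrization of the same orbit $\Gamma$, while under the corresponding change $\sigma\mapsto e^{\zeta_0}\sigma$ the metric \eqref{eq:OurMetric} is multiplied by $e^{2\zeta_0}$. Thus $\Gamma$ carries a one-parameter family of mutually homothetic complete metrics, and, since the Bryant steady soliton is a member of this family, the lemma follows.

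The main technical point to be careful about is confirming that $\Gamma$ is defined for all $\zeta\in\R$ and remains in the strip $\{0<U<1,\ \Upsilon<0\}$, so that every parametrization of $\Gamma$ really produces a complete metric satisfying $0<U<1$ everywhere. This trapping is checked by examining the sign of $dU/d\zeta = \Upsilon + (n-1)(U^{-1}-U)$ along $U = 1$ (where it equals $\Upsilon<0$, so $\Gamma$ cannot re-enter from below) and along the nullcline $\Upsilon + (n-1)(U^{-1}-U) = 0$, together with the standard argument — an analogue of Lemma~\ref{UstaysPositive}, in fact simpler here because the $\rho/2$ term is absent — showing that $U$ cannot reach $0$ at any finite $\zeta$. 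Global existence of $\Gamma$ in forward $\zeta$-time then follows from boundedness of $U$ in the strip and smoothness of the vector field away from $U = 0$.
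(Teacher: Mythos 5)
You and the paper both organize the argument around the saddle point $(\Upsilon,U)=(0,1)$ of \eqref{eq:ODESystem}, but you do complementary halves of the work. The paper's proof concentrates on the \emph{existence} half: substituting $\sigma=w(s)$, $U=w_s(s)$ converts \eqref{eq:OurMetric} into the Bryant metric \eqref{eq:BryantMetric}, and setting $\Upsilon=wf_s$ converts \eqref{eq:ODESystem} into the Bryant system \eqref{eq:BryantSystem}; this exhibits the Bryant soliton as the unstable trajectory of \eqref{eq:ODESystem} in the strip $\{0<U<1,\ \Upsilon<0\}$. For the uniqueness of that trajectory the paper simply cites Bryant's phase-plane analysis as reproduced in \cite[Ch.~1, \S 4]{YetAnotherBook}. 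You invert this emphasis: you sketch the uniqueness half directly (eigenvalues $-(n-1)$ and $2$, unstable eigenvector $(2n,1)$, one branch of $W^u$ in each of $\{U>1\}$ and $\{U<1\}$, scaling as $\zeta$-translation, trapping in the strip), which is a legitimate and more self-contained alternative to the citation. Where your write-up falls short is that you take for granted that the Bryant soliton does lie on the $\{0<U<1,\ \Upsilon<0\}$ branch: ``since the Bryant steady soliton is a member of this family'' is exactly the content that the paper's computation $\Upsilon=wf_s$ supplies. Without it, you have shown only that there is a unique homothety class of complete solutions in the strip, not that the Bryant soliton belongs to it.

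A secondary point worth tightening: a complete metric of the form \eqref{eq:OurMetric} with $0<U<1$ need not be defined all the way down to $\sigma=0$; in principle an orbit defined only on $(\zeta_0,\infty)$, with $U\to 0$ fast enough that $\int_{\sigma_0} d\sigma/U$ diverges at $\sigma_0 = e^{\zeta_0}$, would also yield a complete metric. Your trapping discussion addresses forward $\zeta$ but you should also close the backward direction — for instance, note that the bound $0<U<1$ keeps the orbit bounded above and then argue (from the sign structure of the vector field, or via Poincar\'e--Bendixson, since $(0,1)$ is the only fixed point of \eqref{eq:ODESystem} with $U>0$) that the backward $\alpha$-limit set must be $(0,1)$.
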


\begin{proof}
  It is well known that the Bryant soliton is the unique (up to scaling)
  complete, rotationally symmetric steady gradient soliton on $\R^{n+1} $,
  for all $n\geq2$.  The point of the lemma is to exhibit it as a solution
  of (\ref{eq:U_stationary}), i.e.~an unstable trajectory of
  (\ref{eq:ODESystem}) emerging from the fixed point $(\Upsilon, U)=(0,1)
  $.

  We write the Bryant soliton $(\R^{n+1},\bar{g},\mathrm{grad}f)$ as it
  appears in \cite[Chapter~1, Section~4]{YetAnotherBook}, following
  unpublished work of Robert Bryant.  The metric $\bar{g}$ is defined in
  polar coordinates on
  $\R^{n+1}\setminus\{0\}\approx(0,\infty)\times\cS^{n}$ by
  \begin{equation}
    \bar{g}=ds^2+w^2(s)\,\gcan .  \label{eq:BryantMetric}
  \end{equation}
  The soliton flows along the vector field $\mathrm{grad}f$, where $f$ is
  the soliton potential function, i.e.~the solution of $\Rc(\bar{g}
  )+\bar{\nabla}\bar{\nabla}f=0$.  For this to hold, it is necessary and
  sufficient that $w(s)$ and $f(s)$ satisfy the system
  \begin{equation}
    w_{ss}=w_{s}f_{s}+(n-1)\frac{1-w_{s}^2}{w},\qquad f_{ss}=n\frac{w_{ss}}{w}.
    \label{eq:BryantSystem}
  \end{equation}

  Because $U>0$ for solutions of interest, we substitute $\sigma=w(s)$ and
  $U=w_{s}(s)$, thereby transforming (\ref{eq:OurMetric}) into
  (\ref{eq:BryantMetric}).  Then recalling from (\ref{eq:Define-zeta}) that
  \[
  d\zeta=\,\frac{d\sigma}{\sigma}=\frac{w_{s}}{w}\,ds,
  \]
  one sees that (\ref{eq:ODESystem}) becomes
  \begin{equation}
    \label{eq:OurSystemRedux}w_{ss} =w_{s}\frac{\Upsilon}{w}+(n-1)\frac
    {1-w_{s}^2}{w},\qquad\Upsilon_{s} =w_{s}f_{s}+nw_{ss}.
  \end{equation}
  The choice $\Upsilon=wf_{s}$ transforms (\ref{eq:OurSystemRedux}) into
  (\ref{eq:BryantSystem}).

  In \cite{YetAnotherBook}, following Bryant's work, the Bryant soliton is
  recovered from a careful analysis of trajectories of the \textsc{ode}
  system \cite[equation~(1.48)]{YetAnotherBook} corresponding to $x=U$ and
  $y=nU-\Upsilon$ near the saddle point $(x,y)=(1,n)$ corresponding to
  $(U,\Upsilon)=(1,0)$.  There it is shown that there exists a unique
  unstable trajectory (corresponding to $U\to0$ and $\Upsilon\to-\infty$ as
  $s$ increases) that results in a (non-flat) complete steady gradient
  soliton.  Moreover, this solution is unique up to rescaling.
\end{proof}

Let $U:(0,\infty)\to(0,1)$ be a solution of (\ref{eq:U_stationary}) which
corresponds to the Bryant steady soliton.  Every other such solution of
(\ref{eq:U_stationary}) is given by $U(k\sigma)$ for some $k>0$.

We now note a few simple facts about the behavior of the Bryant steady
soliton.  Its properties near infinity are well known, but its properties
near the origin are not as readily found in the literature.

\begin{lemma} \label{lem:BabyBryant}$~$

  \begin{enumerate}
  \item $U $ is strictly monotone decreasing for all $\sigma>0$.

  \item Near $\sigma=0$, $U(\sigma)^2$ is a smooth function of $\sigma^2 $,
    with the asymptotic expansion
    \[
    U(\sigma)^2=1+b_2\sigma^2+\frac{n}{n+3}b_2^2\sigma^4+\frac
    {n(n-1)}{(n+3)(n+5)}b_2^3\sigma^6+\cdots,
    \]
    where $b_2<0$ is arbitrary.

  \item Near $\sigma=+\infty$, $U^2$ has the asymptotic expansion
    \[
    U (\sigma)^2=c_2\sigma^{-2}+\frac{4-n}{n-1}c_2^2\sigma^{-4}
    +\frac{(n-4)(n-7)}{(n-1)^2}c_2^3\sigma^{-6}+\cdots,
    \]
    where $c_2>0$ is arbitrary.
  \end{enumerate}
\end{lemma}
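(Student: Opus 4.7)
The plan rests on Lemma~\ref{BryantSteadyState}: up to the scaling $U(\sigma)\mapsto U(\lambda\sigma)$, the function $U$ is the unique trajectory of \eqref{eq:ODESystem} contained in $\{0<U<1,\ \Upsilon<0\}$ that emerges from the saddle point $(\Upsilon,U)=(0,1)$ along its one-dimensional unstable manifold. For part~(1), I would differentiate \eqref{eq:U_stationary}, collect the terms containing $U'$, and evaluate at any hypothetical critical point $\sigma_0$ of $U$; the equation collapses to $U''(\sigma_0) = -\tfrac{n-1}{\sigma_0^2}\bigl(U(\sigma_0)^{-1}-U(\sigma_0)\bigr)$, which is strictly negative whenever $0<U<1$. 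So every interior critical point of $U$ would have to be a strict local maximum. Combined with $\lim_{\sigma\searrow 0}U(\sigma)=1$ and the fact that $U$ stays strictly below $1$, this rules out interior critical points and forces $U'<0$ throughout.

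For part~(2), the linearization of the analytic vector field \eqref{eq:ODESystem} at $(\Upsilon,U)=(0,1)$ has eigenvalues $2$ and $-(n-1)$, so the analytic unstable manifold theorem gives that $W^u$ is an analytic curve on which $U-1$ and $\Upsilon$ are real-analytic functions of the flow variable $e^{2\zeta}=\sigma^2$. Hence $U^2-1$ is analytic in $\sigma^2$ near $\sigma=0$. To extract the coefficients, I would substitute the ansatz $V\equiv U^2 = 1 + \sum_{k\geq 1}b_{2k}\sigma^{2k}$ into $\cF[V]=0$ and match orders in $\sigma$: the $\sigma^0$ order vanishes automatically, and subsequent orders yield a recursion determining each $b_{2k+2}$ in terms of $b_2,\dots,b_{2k}$, with $b_2$ being the single free parameter (reflecting the scaling freedom). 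A short direct calculation at orders $\sigma^2$, $\sigma^4$, $\sigma^6$ produces the stated values $b_4=\tfrac{n}{n+3}b_2^2$, $b_6=\tfrac{n(n-1)}{(n+3)(n+5)}b_2^3$, and the sign $b_2<0$ then follows from part~(1) together with $U(0)=1$.

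For part~(3), I would insert the formal series $V = \sum_{k\geq 1}c_{2k}\sigma^{-2k}$ into $\cF[V]=0$ and match powers of $\sigma^{-1}$; the leading equation is satisfied for arbitrary $c_2>0$ (once again, the scaling freedom), and subsequent orders determine each $c_{2k}$ uniquely, yielding the stated formulas for $c_4$ and $c_6$. The nontrivial step is to show that this formal series actually is an asymptotic expansion of the Bryant trajectory. For this, I would introduce $\eta=1/\sigma$ together with a rescaled variable such as $\tilde\Upsilon=U\Upsilon$; a direct computation using \eqref{eq:ODESystem} shows that the system extends analytically to a neighborhood of a fixed point of the form $(U,\tilde\Upsilon,\eta)=(0,-(n-1),0)$, and I would then apply a center-manifold or normal-form argument to identify the Bryant trajectory with the invariant manifold tangent to the formal expansion.

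The main obstacle is exactly this last step in part~(3): because the natural compactification of \eqref{eq:ODESystem} at $\sigma=\infty$ produces a (partially) nonhyperbolic fixed point, one cannot simply invoke the analytic unstable manifold theorem as in part~(2). The work is to desingularize this point, e.g.\ by the rescaling sketched above or by a sequence of blow-ups, and then to verify that the formal series is actually the asymptotic expansion of an invariant manifold containing the Bryant trajectory. A shortcut, if we are content to cite, would be to read off the claimed expansion from the detailed asymptotics of Bryant's soliton recorded in \cite{YetAnotherBook}.
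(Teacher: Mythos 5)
Your proposal establishes the same recursions but reaches them by a genuinely different route in parts (1) and (2), and in part (3) you correctly identify a logical gap that the paper itself leaves unaddressed.

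For part (1), the paper simply cites the known fact that the Bryant soliton has strictly positive sectional curvatures (with equal positive limits at the origin), and reads off $U'<0$ and $U<1$ from $K=-UU_\sigma/\sigma>0$, $L=(1-U^2)/\sigma^2>0$. Your argument instead works directly with \eqref{eq:U_stationary}: at any critical point $\sigma_0$ the equation reduces to $U''(\sigma_0)=-\tfrac{n-1}{\sigma_0^2}(U(\sigma_0)^{-1}-U(\sigma_0))<0$, so every critical point would be a strict local maximum; combined with $\lim_{\sigma\searrow 0}U=1$ and $U<1$ (from Lemma~\ref{BryantSteadyState}) this forces monotonicity. This is correct, more self-contained, and indeed it is the same mechanism the paper uses for the expander in Lemma~\ref{ConesAreAllWeGot}. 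For part (2), the paper invokes Ivey's result that $w$ is odd and smooth at $s=0$ (so $U=w'$ is even and smooth), then matches coefficients in $\cF_\IN[U^2]=0$. Your route — the analytic unstable-manifold theorem plus one-dimensional analytic linearization of the flow on $W^u$ (eigenvalue $2$, giving $e^{2\zeta}=\sigma^2$ as an analytic chart) — is also correct and actually yields the stronger conclusion that $U^2$ is real-analytic in $\sigma^2$, at the cost of invoking slightly heavier machinery. The coefficient recursion at orders $\sigma^2$ and $\sigma^4$ does give $b_4=\tfrac n{n+3}b_2^2$ and $b_6=\tfrac{n(n-1)}{(n+3)(n+5)}b_2^3$; the sign $b_2<0$ follows from part (1).

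In part (3) your caution is warranted: the paper simply asserts that $U^2$ has an asymptotic expansion in even powers of $\xi=\sigma^{-1}$, cites \cite{YetAnotherBook} only for the two-sided bound on $w(s)$ used to fix $c_0=0$, and then matches coefficients — the existence of the expansion itself is not justified. Your diagnosis that the compactified fixed point at $\sigma=\infty$ is nonhyperbolic (the one-parameter freedom in $c_2$ is precisely the degenerate direction) is correct, and the program of desingularizing and identifying the Bryant trajectory with an invariant manifold tangent to the formal series is the right idea. However, the particular rescaling you propose, $\tilde\Upsilon=U\Upsilon$ together with $\eta=1/\sigma$, does not by itself produce a smooth vector field near $(U,\tilde\Upsilon,\eta)=(0,-(n-1),0)$: the $\tilde\Upsilon$-equation retains a $\tilde\Upsilon(\tilde\Upsilon+(n-1))/U^2$ term, which is only an indeterminate form near the fixed point rather than manifestly regular, so an additional time-reparametrization or blow-up is genuinely required before any center-manifold theorem applies. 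You acknowledge this, so I would not call it an error, but it is the one place where the argument is not closed. Falling back on the detailed Bryant asymptotics from the literature (as you suggest) is the pragmatic fix and is evidently what the paper implicitly does.
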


\begin{proof}
  \textbf{(1)} It is well known (see
  e.g.~\cite[Lemma~1.37]{YetAnotherBook}) that the sectional curvatures of
  the Bryant soliton are strictly positive for all $\sigma>0$ and have the
  same positive limit at the origin.  By (\ref{eq:Sectionals}), the
  sectional curvature $K$ of a plane perpendicular to the sphere
  $\{\sigma\}\times\cS^{n}$ and the sectional curvature $L $ of a plane
  tangent to the sphere $\{\sigma\}\times\cS^{n}$ are
  \[
  K=-\frac{UU_{\sigma}}{\sigma}\quad\text{and}\quad L=\frac{1-U^2}{\sigma^2
  },
  \]
  respectively.  Hence $U'(\sigma)<0$ and $U(\sigma)<1$ for all $\sigma>0$.

  \textbf{(2) }We showed in Lemma~\ref{BryantSteadyState} that $U=w'$.
  Because $w$ is an odd function that is smooth at zero \cite{Ivey92}, $U$
  is an even function that is smooth at zero.  Thus $U^2$ has an asymptotic
  expansion near $\sigma=0$ of the form
  \[
  U(\sigma)^2=1+b_2\sigma^2+b_4\sigma^4+b_6\sigma^6+\cdots.
  \]
  By part (1), we must have $b_2<0$.  (Arbitrariness of $b_2$ corresponds
  to the invariance of $U$ under the rescaling $\sigma\mapsto k\sigma$.)
  The remaining coefficients are determined by the equation $\cF_\IN
  [U^2]=0$, which implies that
  \[
  0=\left[ 2(n+3)b_4-2nb_2^2\right] \sigma^2+4\left[ (n+5)b_6
  -(n-1)b_2b_4\right] \sigma^4+\cdots.
  \]

  In particular,
  \[
  b_4=\frac{n}{n+3}b_2^2>0\qquad\text{and}\qquad b_6=\frac{n-1}{n+5}
  b_2b_4<0.
  \]
  The claimed expansion follows.

  \textbf{(3)} Let $\xi=\sigma^{-1}$.  Then $U $ has an asymptotic
  expansion near $\xi=0$ of the form
  \[
  U(\sigma)^2=c_0+c_2\xi^2+c_4\xi^4+c_6\xi^6+\cdots.
  \]
  It is well known (see e.g.~\cite[Remark~1.36]{YetAnotherBook}) that
  $C^{-1}\sqrt{s}\leq w(s)\leq C\sqrt{s}$ for $s$ bounded away from $0$.
  This forces $c_0=0$.  One finds that $\cF [U^2] =0$ if and only if
  \[
  \xi^2U^2U_{\xi\xi}^2 -\frac12(\xi U_{\xi}^2)^2 -(n-1-3U^2)\xi U_{\xi}^2
  +2(n-1)(U^2-U^4) =0.
  \]
  This allows $c_2>0$ to be arbitrary but forces
  \[
  c_4=\frac{4-n}{n-1}c_2^2\qquad\text{and}\qquad c_6=\frac
  {(n-4)(n-7)}{(n-1)^2}c_2^3.
  \]
  The claimed expansion follows.
\end{proof}

In function $\bry $ which we have used extensively in this paper is by
definition
\begin{equation}
  \label{eq:beta-defined}\bry (\sigma) = U(k\sigma)^2
\end{equation}
where $U$ is as above in Lemma~\ref{lem:BabyBryant}, and where the constant
$k$ is chosen so that
\begin{equation}
  \label{eq:beta-normalization}\bry (\sigma) = \frac1{\sigma^2} +
  o(\sigma^{-2}), \qquad(\sigma\to\infty).
\end{equation}
Lemma~\ref{lem:BabyBryant} implies that such a choice of $k$ exists, and
that $\bry $ is thus uniquely defined.

\end{document}